\documentclass[11pt,a4paper,final]{article}

\usepackage{amsmath}    
\usepackage{amsfonts}   
\usepackage{amssymb}    
\usepackage{amsthm}     
\usepackage{mathrsfs}   
\usepackage{graphicx}   
\usepackage{color}
\usepackage{bm}
\usepackage{hyperref}
\usepackage[square,sort,comma,numbers]{natbib}
\usepackage[left=3cm,right=3cm,top=3cm,bottom=3cm]{geometry}
\usepackage[titletoc]{appendix}

\newlength{\vslength}
\setlength{\vslength}{2.2ex plus0.3ex minus0.2ex}



\newcommand{\iid}{{i.i.d.}}

\newtheorem{theorem}{Theorem}[section]

\newtheorem{lemma}{Lemma}[section]

\newtheorem{proposition}{Proposition}[section]

\newtheorem{corollary}{Corollary}[section]

\theoremstyle{remark}

\newtheorem{example}{\bf Example}[section]

\renewenvironment{proof}{\noindent{\it Proof.}}{\qed}


\newcommand{\score}{\dot{\ell}}
\newcommand{\secscore}{\ddot{\ell}}

\newcommand{\scrD}{{\mathscr D}}

\newcommand{\scrM}{{\mathscr M}}

\def\cF{\mathcal F}
\def\cG{\mathcal G}
\def\cH{\mathcal H}

\def\cN{\mathcal N}

\def\cS{\mathcal S}

\def\cX{\mathcal X}

\newcommand{\bD}{{\bf D}}
\newcommand{\bfe}{{\bf e}}

\newcommand{\bH}{{\bf H}}

\newcommand{\bw}{{\bf w}}

\newcommand{\bx}{{\bf x}}
\newcommand{\bX}{{\bf X}}

\newcommand{\bY}{{\bf Y}}

\newcommand{\bbE}{{\mathbb E}}

\newcommand{\bbG}{{\mathbb G}}
\newcommand{\bbN}{{\mathbb N}}
\newcommand{\bbP}{{\mathbb P}}
\newcommand{\bbR}{{\mathbb R}}

\newcommand{\E}{\mathbb{E}}

\newcommand{\bc}{\begin{center}}
\newcommand{\ec}{\end{center}}
\newcommand{\be}{\begin{equation}}
\newcommand{\ee}{\end{equation}}
\newcommand{\ba}{\begin{array}}
\newcommand{\ea}{\end{array}}
\newcommand{\bean}{\setlength\arraycolsep{2pt}\begin{eqnarray*}}
\newcommand{\eean}{\end{eqnarray*}}
\newcommand{\bea}{\setlength\arraycolsep{2pt}\begin{eqnarray}}
\newcommand{\eea}{\end{eqnarray}}
\newcommand{\ben}{\begin{enumerate}}
\newcommand{\een}{\end{enumerate}}
\newcommand{\bed}{\begin{itemize}}
\newcommand{\eed}{\end{itemize}}

\def\half{\hbox{$1\over2$}}


\begin{document}

\thispagestyle{empty}

\title{\vspace*{-9mm}            
	Bayesian Sparse Linear Regression\\ with Unknown Symmetric Error\footnote{This article has been accepted for publication in {\it Information and Inference} Published by Oxford University Press. The accepted version contains significantly improved results, which is available at \url{https://doi.org/10.1093/imaiai/iay022} }}
\author{Minwoo Chae$^{1}$,  Lizhen Lin$^{2}$ and David B. Dunson$^{3}$\\[2mm]
  {\small\it {}$^{1}$\, Department of Mathematics, The University of Texas at Austin}\\
  {\small\it {}$^{2}$\, Department of Statistics and Data Sciences, The University of Texas at Austin}\\
  {\small\it {}$^{3}$\, Department of Statistical Science, Duke University}
  }

\date{\today}
\maketitle

\begin{abstract}
We study full Bayesian procedures for sparse linear regression when errors have a symmetric but otherwise unknown distribution.
The unknown error distribution is endowed with a symmetrized Dirichlet process mixture of Gaussians.
For the prior on regression coefficients, a mixture of point masses at zero and continuous distributions is considered.
We study behavior of the posterior with diverging number of predictors.
Conditions are provided for consistency in the mean Hellinger distance.
The compatibility and restricted eigenvalue conditions yield the minimax convergence rate of the regression coefficients in $\ell_1$- and $\ell_2$-norms, respectively. 
The convergence rate is \emph{adaptive to both the unknown sparsity level and the unknown symmetric error density} under compatibility conditions.
In addition, strong model selection consistency and a semi-parametric Bernstein-von Mises theorem are proven under slightly stronger conditions.
\\
{\bf Keywords}: Adaptive contraction rates, Bernstein von-Mises theorem, Dirichlet process mixture, high-dimensional semiparametric  model, sparse prior, symmetric error

\end{abstract}


\section{Introduction}

Given  data $(x_1, Y_1), \ldots, (x_n, Y_n)$ consisting of response variables $Y_i \in \bbR$ and covariates $x_i \in \bbR^p$, we consider the following linear regression model:
\be\label{eq:lm}
	Y_i = x_i^T\theta + \epsilon_i, \quad i=1, \ldots, n
\ee
where $\theta \in \bbR^p$ is the unknown regression coefficient and $\epsilon_i$'s are random errors following a density $\eta$.
The  data  are assumed to be generated  from  some true pair $(\theta_0, \eta_0)$, where $\theta_0$ is the true regression coefficient vector and $\eta_0$ is the true error density.
We consider the  high-dimensional setting where  $p$, the number of the predictors and the size of the coefficient vector,  may grow with the sample size $n$, and possibly $p\gg n$.
If $p > n$, model \eqref{eq:lm} is not identifiable due to the singularity of its design matrix, therefore $\theta$ is not estimable unless further restrictions or structures are imposed.
A standard assumption for $\theta$ is the sparsity condition which assumes that most  components of $\theta$ are zero.
For the last two decades, model \eqref{eq:lm} has been extensively studied under various sparsity conditions, in particular through penalized regression approaches such as Lasso and its various variants or extensions \cite{Tibshirani94lasso,fuselasso, zou06, elastic-net}.
Recent advances in MCMC and other computational algorithms  have led to a growing development of Bayesian models   incorporating sparse priors \citep{EG2000, ishwaran2005, mitchell88, castillo2012needles, castillo2015bayesian}.  In general, two classes of sparse priors are often used, the first being the spike and slab type  (see e.g., \cite{castillo2012needles, castillo2015bayesian}),  with some recent work extending to continuous versions \citep{ishwaran2005, VREG2016, VR2016, narisetty2014},   and the other  being continuous shrinkage priors; in particular,  local-global shrinkage priors (see \cite{bhattacharya-dunson, Carvalho01062010, poslson-scott}).

In the literature,  both frequentist and Bayesian, the standard Gaussian error model, in which $\epsilon_i$'s are assumed to be \iid\ Gaussian, is typically adopted, providing substantial computational and theoretical benefits.
Using a squared error loss function, various  penalization techniques are developed. 
Theoretical aspects of  such estimates  have  been explored,  showing recovery of $\theta$ in nearly optimal rate or optimal selection of the true non-zero coefficients \citep{bickel2009, DONOHO01091994, fan-li2001,  johnstone2004, castillo2015bayesian}. More recent theoretical advances assure that  relying on certain desparsifying techniques, asymptotically optimal (or at least honest) confidence sets can be constructed \citep{vandegeer2014}.
These results rely on the assumption of Gaussian errors.

Although some theoretical  properties,  such as consistency and rates of convergence, are robust to misspecification of $\eta$, methods that assume Gaussianity  may still face many serious problems when $\eta$ is non-Gaussian. 
First, although a point estimator may be consistent in nearly optimal rate,  its efficiency is not  satisfactory \citep{van1998asymptotic, kleijn2012bernstein, chae2016semiparametric}.
Also, confidence or credible sets do not provide correct uncertainty quantification under model misspecification \citep{kleijn2012bernstein}.
Furthermore,  misspecification can cause  problems in model selection \citep{grunwald2014inconsistency}, resulting in  serious overfitting.
To avoid model misspecification, a natural remedy is to adopt  a semi-parametric model, which treats $\eta$ as an unknown infinite-dimensional parameter.  For semi-parametric models with \emph{fixed $p$}, \cite{bickel1982adaptive} proposed an adaptive estimator, and \cite{minwoo-thesis, chae2016semiparametric} considered a Bayesian semi-parametric  framework, while deriving a misspecified LAN (local asymptotic normality) condition for proving a semi-parametric Bernstein von-Mises (BVM) theorem. However,  little is known about  theoretical properties of  high-dimensional semi-parametric regression model due to technical barriers.


In this paper, we consider a Bayesian semi-parametric approach for the high-dimensional linear regression model \eqref{eq:lm}.
Specifically, we impose a  sparse prior for $\theta$ and Dirichlet process (DP) mixture prior on $\eta$, and study asymptotic behavior of the full posterior distribution,
for which we have developed substantially new tools and theories.
Our work provides a suite of  new asymptotic results including  \emph{posterior consistency, optimal posterior contraction rates, and strong model selection consistency}. A positive theoretical result states that the convergence rate of the marginal posterior of $\theta$ is \emph{adaptive to both the unknown sparsity level and the unknown symmetric error density} under compatibility conditions on the design matrix. Convergence rate of $\eta$ also depends on the unknown sparsity level. More importantly, we also derive the LAN condition for this model with which the \emph{semi-parametric Bernstein von-Mises theorem and strong model selection consistency} are proved.  
The BVM theorem assures asymptotic efficiency  and provides accurate quantification of uncertainties.
To the best of our knowledge, there is no literature, neither frequentist nor Bayesian, considering a semi-parametric efficient estimator for the high-dimensional linear model \eqref{eq:lm}.
Asymptotic results for high-dimensional Bayesian model selection beyond Gaussian error are also quite novel.
It should be noted that in contrast to current frequentist approaches, it is straightforward to modify computational algorithms developed for sparse linear models to allow unknown symmetric errors (see \cite{kundu2014bayes, minwoo-thesis}).
The additional computational burden for each step depends only on $n$, so it is feasible to construct a semi-parametric Bayes estimator for model \eqref{eq:lm}.

It is worthwhile to mention some of the technical aspects.
In the contexts of semi-parametric efficiency, the most challenging problem is to handle semi-parametric biases arising due to the unknown $\eta$.
These biases vanish if score functions are consistent at a certain rate.
Using the structure of Gaussian mixtures, we prove that the ``no-bias" condition holds if $s_0 \log p \leq n^{1/6-\xi}$ for some $\xi > 0$, where $s_0$ is the number of true non-zero coefficients.
For selection consistency in Bayesian high-dimensional models, it is not uncommon to use exponential moment conditions of certain quadratic forms of score functions.
For models with Gaussian error, it is easy to see that such quadratic forms follow chi-square distributions allowing exponential moments.
For non-Gaussian models, a careful application of the Hanson-Wright inequality \cite{hanson1971bound, wright1973bound} provides similar exponential bounds.

The paper is organized as follows. Section \ref{sec-prior-design} introduces the notation,  priors and  design matrices. In  Section \ref{sec-main}  we summarize our main theorems. Section \ref{sec-proofs} includes the proofs of the main theorems and some important lemmas.
Concluding remarks are given in Section \ref{sec-discussion}.
Some well-known results on bracketing and concentration inequalities  frequently used in proofs are provided in  the appendices.

\section{Prior and design matrix}
\label{sec-prior-design}

\subsection{Notation}

In this subsection, we introduce some of the notation used throughout the paper. 
Dependence on the sample size $n$ is often not made explicit.
For a density $\eta$, let $P_\eta$ be the corresponding probability measure.
For $\theta\in\bbR^p$, $x\in\bbR^p$, $y\in\bbR$ and suitably differentiable density $\eta$, let $\ell_\eta(y) = \log \eta(y)$, $\ell_{\theta,\eta}(x,y) = \ell_\eta(y-x^T\theta)$, $\score_\eta(y) = - \partial \ell_\eta(y) / \partial y$, $\secscore_\eta(y) = \partial^2 \ell_\eta(y) / (\partial y)^2$, $\score_{\theta,\eta}(x,y) = \score_\eta(y-x^T\theta) x$ and $\secscore_{\theta,\eta}(x,y) = \secscore_\eta(y-x^T\theta) xx^T$.
The support of $\theta$ is defined as $S_\theta = \{i\leq p: \theta_i \neq 0\}$, and $s_\theta = |S_\theta|$ is the cardinality of $S_\theta$.
Let $S_0 = S_{\theta_0}$ and $s_0 = |S_0|$.
For given $S \subset \{1, \ldots, p\}$, let $\theta_S = (\theta_i)_{i\in S}\in\bbR^{|S|}$ and $\widetilde \theta_S = (\widetilde\theta_i)_{i=1}^p\in\bbR^p$, where $\widetilde \theta_i = \theta_i$ for $i\in S$ and $\widetilde \theta_i = 0$ for $i \in S^c$.
For $1 \leq q < \infty$, define the $\ell_q$-norm as $\|\theta\|_q = (\sum_{i=1}^p |\theta_i|^q)^{1/q}$ and $\|\theta\|_\infty = \max_i |\theta_i|$.

The Hellinger and total variation metrics between two densities $\eta_1$ and $\eta_2$ with respect to $\mu$ are defined as $d_H^2(\eta_1, \eta_2) = \int (\sqrt{\eta_1} - \sqrt{\eta_2})^2d\mu$ and $d_V(\eta_1,\eta_2) =  \int |\eta_1-\eta_2|d\mu$.
Let $K(\eta_1,\eta_2) = \int \eta_1 \log(\eta_1/\eta_2) d\mu$ be the Kullback-Leibler (KL) divergence.
Let $P_{\theta,\eta}^{(n)}$ (the superscript $(n)$ is often excluded) be the probability measure corresponding to model \eqref{eq:lm}, and $P_0^{(n)} = P_{\theta_0, \eta_0}^{(n)}$.
For a given function $f$, let
\bean
	\bbP_n f &=& \frac{1}{n}\sum_{i=1}^n f(x_i, Y_i)
	\\
	\bbG_n f &=& \frac{1}{\sqrt{n}}\sum_{i=1}^n \Big\{f(x_i, Y_i) - P_0 f(x_i, Y_i)\Big\}.
\eean
For a probability measure $P$, $P f$ denotes the expectation of $f$ under $P$.
Expectation under the true distribution is often denoted by $\E$, and $\E_x f = \int f(x,y) \eta_0(y-x^T\theta_0)dy$.
For a class $\cF$ of real valued functions,
$N(\epsilon, \cF, d)$ and $N_{[]}(\epsilon, \cF, d)$ denote the covering and bracketing numbers \cite{van1996weak} of $\cF$ with respect to a (semi-)metric $d$.
For two real numbers $a$ and $b$, $a\vee b$ and $a \wedge b$ denotes the maximum and minimum of $a$ and $b$, respectively.


\subsection{Prior}

Let $\cH$ be the class of continuously differentiable densities $\eta$ with $\eta(x) = \eta(-x)$ and $\eta(x) > 0$ for every $x \in \bbR$, equipped with the Hellinger metric.
We impose a product prior $\Pi = \Pi_\Theta\times\Pi_\cH$ for $(\theta, \eta)$, where $\Pi_\Theta$ and $\Pi_\cH$ are Borel probability measures on $\Theta=\bbR^p$ and $\cH$, respectively.
We use a mixture of point masses at zero and continuous distributions for $\Pi_\Theta$, and a symmetrized DP mixture of normal distributions \cite{minwoo-thesis, chae2016semiparametric} for $\Pi_\cH$.

Specifically, for a prior $\Pi_\Theta$ on $\theta$, we first select a dimension $s$ from a prior $\pi_p$ on the set $\{0, \ldots, p\}$, next a random set $S \subset \{1, \ldots, p\}$ of cardinality $s$, and finally a set of non-zero values $\theta_S$ from a prior density $g_S$ on $\bbR^{|S|}$.
The prior on $(S,\theta)$ can be formally expressed as
$$
	(S,\theta) \mapsto \pi_p(s) \frac{1}{\binom{p}{s}} g_S(\theta_S) \delta_0(\theta_{S^c}),
$$
where the term $\delta_0(\theta_{S^c})$ refers to the coordinates $\theta_{S^c}$ being zero.
Since  sparsity is imposed by $\pi_p$, the density $g_S$ must have tails at least as heavy as the Laplace distribution for desirable large sample properties,  as is well-studied in the Gaussian error case by \cite{castillo2012needles, van2016conditions}.
Data dependent priors \cite{martin2014asymptotically, martin2014empirical, yang2015computational} placing sufficient mass around $\theta_0$ are also possible.
We consider a product of the Laplace density $g(\theta) = \lambda \exp(-\lambda|\theta|)/2$, and have the following assumptions as in \cite{castillo2015bayesian}:
there are constants $A_1, A_2, A_3, A_4 > 0$ with
\be\label{eq:pi_p_condition}
	A_1 p^{-A_3} \pi_p (s-1) \leq \pi_p(s) \leq A_2 p^{-A_4} \pi_p(s-1), ~~~ s=1, \ldots, p,
\ee
and the scale parameter $\lambda$ satisfies
\be\label{eq:lambda_condition}
	\frac{\sqrt{n}}{p} \leq \lambda \leq \sqrt{n \log p}.
\ee
Some useful examples satisfying \eqref{eq:pi_p_condition} are provided in \cite{castillo2015bayesian, castillo2012needles}.

We use a symmetrized DP mixture of normal prior for $\Pi_\cH$, whose properties and inferential methods are  well-known \cite{minwoo-thesis, chae2016semiparametric}.
Assume positive numbers $\sigma_1 < \sigma_2$ and $M$ are given.
Let $\scrM$ be the set of all Borel probability measures on $[-M,M] \times [\sigma_1,\sigma_2]$, and 
$$
	\overline\scrM = \{F\in\scrM: dF(z,\sigma) = dF(-z,\sigma)\}.
$$
Let $\cH_0$ be the set of all $\eta\in\cH$ of the form 
\be \label{eq:eta-mixture}
	\eta(x) = \int \frac{1}{\sqrt{2\pi}\sigma}\exp\bigg\{-\frac{(x-z)^2}{2\sigma^2}\bigg\} dF(z,\sigma)
\ee
for some $F\in\overline\scrM$. 
It can be easily shown that for some constants $L_k$, $k\leq 3$, and functions $m_k(x) = L_k(1 + |x|^k)$, we have
\be\begin{split}\label{eq:libschitz}
	|\ell_\eta(x+y) - \ell_\eta(x)| \leq |y| m_1(x)
	\\
	|\score_\eta(x+y) - \score_\eta(x)| \leq |y| m_2(x)
	\\
	|\secscore_\eta(x+y) - \secscore_\eta(x)| \leq |y| m_3(x)
\end{split}\ee
for every $x\in\bbR$, small enough $|y|$, and $\eta\in\cH_0$.

For the prior $\Pi_\cH$, we first select a random probability measure $F$ from $\scrD(F_0)$ and symmetrize it by $\overline F = (F + F^-)/2$, where $\scrD(F_0)$ denotes the DP with  base measure $F_0$ and $dF^-(z, \sigma) = dF(-z,\sigma)$.
The  resulting prior $\Pi_\cH$ on $\eta$ of the form \eqref{eq:eta-mixture} is supported on $\cH_0$.
We assume that $F_0$ has a continuous and positive density supported on $[-M,M]\times[\sigma_1,\sigma_2]$, so the resulting symmetrized DP $\overline F$ has full weak support on $\overline\scrM$, that is, every non-empty weakly open subset of $\overline\scrM$ has a positive mass.

\subsection{Design matrix}

Denote the design matrix as $\bX = (x_{ij}) \in \bbR^{n\times p}$, and let $\bX_S = (\bx_j)_{j\in S}$, where the boldface $\bx_j=(x_{1j}, \ldots, x_{nj})^T$ is the $j$th column of $\bX$.
Let $\bY = (Y_i)_{i=1}^n$, $\Sigma = (\Sigma_{ij}) = n^{-1} \bX^T \bX$, and $\Sigma_S = (\Sigma_{ij})_{i,j\in S}$.
We consider a fixed design in this paper, so expectations in notations such as $P_0$ and $\bbG_n$ represent the expectation with respect to $\bY$ only.
It is not difficult to generalize to the case of random design by considering the population covariance matrix instead of the sample covariance matrix (see Section 6.12 of \cite{buhlmann2011statistics}).
Since $p$ can be larger than $n$,  certain identifiability conditions are required for the estimability of $\theta$.
Define the \emph{uniform compatibility number} by
\bean
	\phi^2(s) = \inf \bigg\{\frac{s_\theta \theta^T\Sigma\theta}{\|\theta\|_1^2}
	: 0 < s_\theta\leq s \bigg\}
\eean
for $1 \leq s \leq p$.
Also, define the \emph{restricted eigenvalue (or sparse singular value)} by
\bean
	\psi^2(s) = \inf \bigg\{\frac{\theta^T\Sigma\theta}{\|\theta\|_2^2}
	: 0 < s_\theta\leq s \bigg\}.
\eean
By the definition of compatibility number and restricted eigenvalue, we have
$$
	\|\bX\theta\|_2^2 \geq \frac{n}{s_\theta} \phi^2(s_\theta) \|\theta\|_1^2,
	\quad \textrm{and} \quad
	\|\bX\theta\|_2^2 \geq n \psi^2(s_\theta) \|\theta\|_2^2
$$
for every $\theta \in \bbR^p$.
Also, $\psi(s) \leq \phi(s)$ by the Cauchy-Schwartz inequality.
It is sufficient for the recovery of $\theta$ if compatibility numbers evaluated at $K s_0$ for some constant $K > 0$ are bounded away from zero.
Assumptions on the design matrix through $\psi(s)$ are required for recovery with respect to the $\ell_2$-norm, whereas the numbers $\phi(s)$ suffice for $\ell_1$-reconstruction.
See \cite{van2009conditions} and Section 6.13 of \cite{buhlmann2011statistics} for more details and examples.

In the remainder of this paper, we always assume that $\eta_0 \in \cH_0$, and for some positive constants $\alpha$ and $L$, $p \geq n^\alpha$ and $\sup_{i,j} |x_{ij}| \leq L$.
We use the notation $\lesssim$ for smaller than up to a constant multiplication, where the constant is universal (2, $\pi$, $e$, etc.) or depends only on $M, L, \sigma_1, \sigma_2, \alpha$ and $A_j$, for $j\leq 4$.

\section{Main results}
\label{sec-main}

\subsection{Misspecified LAN}

We first consider a certain type of LAN expansion of the log-likelihood, which is an essential property for the proof of the BVM theorem.
This expansion is also very useful for finding a lower bound on the integrated (or marginal) likelihood.
Let 
$$
	R_n(\theta,\eta) = \prod_{i=1}^n \frac{\eta(Y_i - x_i^T \theta)}{\eta_0(Y_i - x_i^T \theta_0)}
$$
and $L_n(\theta,\eta) = \sum_{i=1}^n \ell_{\theta,\eta}(x_i, Y_i)$.
Let $v_\eta = P_{\eta_0} (\score_\eta \score_{\eta_0})$ and
$$
	V_{n,\eta} = \E \bbP_n (\score_{\theta_0,\eta} \score_{\theta_0, \eta_0}^T) = v_\eta \Sigma.
$$
It can be shown \cite{chae2016semiparametric} that $v_\eta = -P_{\eta_0}(\secscore_\eta)$ for every $\eta\in\cH_0$.
Then, the Taylor expansion of $L_n(\theta,\eta) $ around $\theta_0$  roughly implies that
\be\label{eq:mlan-approx}
	L_n(\theta,\eta) - L_n(\theta_0,\eta)
	\approx \sqrt{n}(\theta-\theta_0)^T \bbG_n \score_{\theta_0,\eta}
	- \frac{n}{2} (\theta-\theta_0)^T V_{n,\eta}(\theta-\theta_0)
\ee
around $\theta_0$.
Since the dimension of $\theta$ may be very high, handling the remainder term of the Taylor expansion is technically demanding.
We call the approximation \eqref{eq:mlan-approx} the \emph{misspecified LAN} \cite{chae2016semiparametric} because the left hand side of \eqref{eq:mlan-approx} is the log-likelihood ratio of the misspecified model $\theta\mapsto P_{\theta,\eta}^{(n)}$. We have the following theorem on the remainder term.

\begin{theorem}[Misspecified LAN] \label{thm:mLAN}
Let $(s_n)$ be a sequence of positive integers and $(\epsilon_n)$ be a real sequence such that $\epsilon_n \rightarrow 0$.
Let $\Theta_n$ be a subset of $\{\theta\in\Theta: s_\theta \leq s_n, \|\theta-\theta_0\|_1 \leq \epsilon_n\}$ and define
\bean
	r_n(\theta,\eta) = L_n(\theta,\eta) - L_n(\theta_0,\eta)
	- \sqrt{n}(\theta-\theta_0)^T \bbG_n \score_{\theta_0,\eta}
	+ \frac{n}{2} (\theta-\theta_0)^T V_{n,\eta}(\theta-\theta_0).
\eean
Then,
\bean
	\E \bigg( \sup_{\theta\in\Theta_n} \sup_{\eta\in\cH_0} |r_n(\theta,\eta)| \bigg)
	\lesssim n \epsilon_n^2 \delta_n 
	+ \epsilon_n \sup_{\theta\in\Theta_n} \|\bX(\theta-\theta_0)\|_2^2,
\eean
where $\delta_n = \sqrt{s_n \log p/n}$.
\end{theorem}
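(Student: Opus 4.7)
The plan is to start from the second-order Taylor expansion of $y\mapsto\ell_\eta(y)$ applied to $\ell_\eta(\epsilon_i - x_i^T h)$, with $h := \theta-\theta_0$ and $\epsilon_i := Y_i - x_i^T\theta_0$, in integral-remainder form:
\begin{align*}
L_n(\theta,\eta) - L_n(\theta_0,\eta) = \sum_{i=1}^n \score_\eta(\epsilon_i)\,x_i^T h + \sum_{i=1}^n (x_i^T h)^2 \int_0^1 (1-t)\,\secscore_\eta(\epsilon_i - tx_i^T h)\,dt.
\end{align*}
Since both $\eta$ and $\eta_0$ are symmetric, $\score_\eta$ is odd and hence $P_{\eta_0}\score_\eta = 0$, so $\sqrt{n}\,\bbG_n\score_{\theta_0,\eta} = \sum_i \score_\eta(\epsilon_i)x_i$; combined with $V_{n,\eta} = v_\eta\Sigma = -P_{\eta_0}(\secscore_\eta)\Sigma$, the first-order terms cancel and one is left with $r_n = T_1 + T_2$, where
\begin{align*}
T_1 &= \tfrac12\sum_{i=1}^n (x_i^T h)^2\bigl[\secscore_\eta(\epsilon_i) - P_{\eta_0}\secscore_\eta\bigr],\\
T_2 &= \sum_{i=1}^n (x_i^T h)^2 \int_0^1 (1-t)\bigl[\secscore_\eta(\epsilon_i - tx_i^T h) - \secscore_\eta(\epsilon_i)\bigr]\,dt.
\end{align*}

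The piece $T_2$ I would control by the Lipschitz bound \eqref{eq:libschitz}. Since $\|h\|_1\leq\epsilon_n$ and $\sup_{i,j}|x_{ij}|\leq L$, we have $|x_i^T h|\leq L\epsilon_n$, small for large $n$, so \eqref{eq:libschitz} yields $|\secscore_\eta(\epsilon_i - tx_i^T h)-\secscore_\eta(\epsilon_i)|\leq |x_i^T h|\,m_3(\epsilon_i)$ uniformly in $t\in[0,1]$ and $\eta\in\cH_0$; thus $|T_2|\lesssim \epsilon_n\sum_i(x_i^T h)^2 m_3(\epsilon_i)$. Writing $m_3(\epsilon_i)=\bar m + (m_3(\epsilon_i)-\bar m)$ with $\bar m := \int m_3\,d\eta_0$ a finite constant, the mean piece contributes the deterministic term $\lesssim \epsilon_n\|\bX h\|_2^2$ appearing on the right-hand side of the claimed bound, while the centered remainder is of the same empirical-process form as $T_1$ and is absorbed into the analysis below.

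Finally I would bound $\E\sup_{\theta,\eta}|T_1|$ (and the centered $T_2$-remainder) by concentration plus chaining. For fixed $(h,\eta)$, $T_1$ is a sum of independent, mean-zero random variables $Z_i(\eta) := \secscore_\eta(\epsilon_i)-P_{\eta_0}\secscore_\eta$ with deterministic weights $w_i := \tfrac12(x_i^T h)^2$; sub-exponentiality of $Z_i(\eta)$ (guaranteed by the uniform lower bound $\sigma\geq\sigma_1>0$ in the mixture \eqref{eq:eta-mixture} and compactness of the mixing support), together with $\sum_i w_i^2 \leq \tfrac14(L\epsilon_n)^2\|\bX h\|_2^2$, yields pointwise sub-Gaussian deviations of order $\epsilon_n\|\bX h\|_2\sqrt{\log(1/\delta)}$. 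For the uniform bound I would then chain over the two indexing sets: $\cH_0$ has uniformly bounded bracketing entropy for the relevant functionals (by compactness of $[-M,M]\times[\sigma_1,\sigma_2]$ and \eqref{eq:libschitz}), while $\{h = \theta-\theta_0 : \theta\in\Theta_n\}$ is supported on subsets of $S_\theta\cup S_0$ with $|S_\theta|\leq s_n$, giving at most $\binom{p}{s_n}$ support patterns and log-covering number of order $s_n\log p$ inside the $\ell_1$-ball of radius $\epsilon_n$. Combining these and using $\sup_\theta\|\bX h\|_2\leq \sqrt n\,L\epsilon_n$ produces $\E\sup_{\theta,\eta}|T_1|\lesssim \epsilon_n\cdot\sqrt n\,\epsilon_n\cdot\sqrt{s_n\log p} = n\epsilon_n^2\delta_n$. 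The main obstacle is precisely this uniform step: the bracketing of $\cH_0$ and the sparse $\ell_1$-ball must be done tightly enough that the entropy factor is no worse than $\sqrt{s_n\log p}$, leaning on the sparsity constraint to keep the effective dimension of the $h$-class at $s_n$ rather than $p$, on the uniform Lipschitz continuity \eqref{eq:libschitz} to control increments of $\secscore_\eta$, and on the compactness of the mixing space to ensure finite bracketing entropy of $\cH_0$.
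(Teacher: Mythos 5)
Your proof is correct and mirrors the paper's own argument: Taylor expansion with integral remainder, cancellation of the first-order score term by symmetry of $\eta$ and $\eta_0$, Lipschitz control via \eqref{eq:libschitz} for the bridge term, and a bracketing/maximal inequality over the sparse $\theta$-class (entropy $\sim s_n\log p$) and $\cH_0$ (finite entropy integral), which the paper packages as Lemma~\ref{lem:secscore-empirical} together with Corollary~\ref{cor:maximal_bracket}. The only difference is cosmetic: you center the $\secscore$-empirical process at $\theta_0$, so your bridge $T_2$ is random and requires the extra split of $m_3$ into its mean $\bar m$ and the centered remainder, whereas the paper centers at the intermediate Taylor point $\theta(t)$ (its $Q_{n,1}$ and $Q_{n,2}$), making its bridge $Q_{n,2}$ a difference of expectations and hence already deterministic, at the cost of indexing the empirical-process lemma by two $\theta$'s ($\theta$ and $\bar\theta=\theta(t)$) rather than one.
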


If $s_n \log p = o(n)$, Theorem \ref{thm:mLAN} implies that
\be\label{eq:usual-lan}
	\sup_{\substack{{\theta: s_\theta \leq s_n} \\ {\|\theta-\theta_0\|_1 \leq n^{-1/2} }} } \sup_{\eta\in\cH_0} |r_n(\theta,\eta)| = o_{P_0}(1),
\ee
which corresponds to the classical LAN expansion that holds in a $n^{-1/2}$-neighborhood of $\theta_0$.
In the classical setting where $p$ is fixed, the convergence rate of the marginal posterior distribution of $\theta$ is $n^{-1/2}$, and the asymptotic bias $V_{n,\eta}^{-1}(\bbG_n\score_{\theta_0, \eta} - \bbG_n\score_{\theta_0, \eta_0})$ vanishes \cite{chae2016semiparametric}, as $\eta$ gets closer to $\eta_0$ with arbitrary rate in Hellinger distance.
As a result, \eqref{eq:usual-lan} is sufficient for the BVM theorem, which assures asymptotic efficiency of a Bayes estimator.
In the high-dimensional setting,  however, the convergence rate of the full parameter $\theta$ depends on $s_0$.
In particular, it is shown in the next subsection that the convergence rate with respect to the $\ell_1$-norm is $s_0\sqrt{\log p / n}$, so \eqref{eq:usual-lan} is not sufficient to get a BvM type result.
For the BVM theorem to hold, the remainder term $r_n(\theta,\eta)$  should be ignorable in neighborhoods to which the posterior distribution contracts.
Since the convergence rate of $\|\theta-\theta_0\|_1 = O(s_0 \sqrt{\log p / n})$ and $\|\bX(\theta-\theta_0)\|_2^2 = s_0 \log p$ (see Theorem \ref{cor:theta-rate}), it is sufficient that $s_0^5 (\log p)^3 = o(n)$.
Also, for the BVM theorem, it is sufficient that $\bbG_n \score_{\theta_0, \eta}$ and $V_{n,\eta}$ converge to $\bbG_n \score_{\theta_0, \eta_0}$ and $V_{n,\eta_0}$, respectively, at a certain rate.
The details of the results are given  in Section \ref{ssec:bvm}.
Although \eqref{eq:usual-lan} is not helpful for proving the BVM theorem in the high-dimensional problem, it is still very useful for finding a lower bound on the integrated likelihood, which is utilized for  proving  posterior consistency and contraction rates, as shown in the next subsection.

\subsection{Posterior consistency and contraction rate} \label{ssec:consistency}

Let $\Pi(\cdot|\bD_n)$ be the posterior distribution of $(\theta,\eta)$ given $\bD_n$,
\be\label{eq:posterior}
	\Pi(A\times B|\bD_n) = 	\frac{\int_A\int_B R_n(\theta,\eta) d\Pi_\cH(\eta) d\Pi_\Theta(\theta)}{\int_\Theta\int_\cH R_n(\theta,\eta) d\Pi_\cH(\eta) d\Pi_\Theta(\theta)}
\ee
for every measurable $A \subset \Theta$ and $B \subset \cH$, where $\bD_n = ((x_i, Y_i): 1\leq i\leq n)$.
With a slight abuse of notation, if there is no confusion, $\Pi(\cdot|\bD_n)$ is sometimes used to denote the marginal posterior distribution of $\theta$.
As mentioned in the previous subsection, the denominator of \eqref{eq:posterior} can be bounded using \eqref{eq:usual-lan}.
Also, the expectation of the numerator can be bounded by either the prior probability in the set $A\times B$ (Theorem \ref{thm:dimension}) or by constructing a certain sequence of tests (Theorem \ref{thm:d_n-consistency}).

\begin{theorem}[Dimension]\label{thm:dimension}
Assume that prior conditions \eqref{eq:pi_p_condition}-\eqref{eq:lambda_condition} hold, $\lambda \|\theta_0\|_1 = O(s_0 \log p)$, and $s_0 \log p = o(n)$.
Then there exists a constant $K_{\rm dim} > 1$ such that
\be \label{eq:dimension}
	\E \Pi\big(s_\theta > K_{\rm dim} \big\{s_0 \vee (\log n)^2 \big\} \;\big|\; \bD_n \big) = o(1).
\ee
\end{theorem}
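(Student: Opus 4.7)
The plan is to apply the standard Bayes-ratio argument. Writing
\[
	\E\, \Pi\bigl(\{s_\theta > S\}\,\big|\,\bD_n\bigr)
	\;=\; \E\, \frac{N_S}{D_n},
	\qquad
	N_S \;=\; \int_{\{s_\theta>S\}} R_n\, d\Pi,
	\qquad
	D_n \;=\; \int R_n\, d\Pi,
\]
I would estimate numerator and denominator separately, take $S = K_{\rm dim}\{s_0 \vee (\log n)^2\}$, and choose $K_{\rm dim}$ at the end. The numerator is easy: since $P_0 R_n(\theta,\eta) = \int \prod \eta(y_i - x_i^T\theta)\,dy_i = 1$ for every $(\theta,\eta)$, Fubini gives $\E N_S = \Pi(s_\theta > S)$. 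The prior decay condition \eqref{eq:pi_p_condition} iterated yields $\pi_p(s) \leq (A_2/p^{A_4})^{s-s_0}\pi_p(s_0)$ for $s \geq s_0$, and summing this geometric series produces
\[
	\Pi(s_\theta > S) \;\lesssim\; p^{-A_4(S - s_0)}.
\]

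The heart of the argument is the denominator lower bound. Restrict the integral defining $D_n$ to a small neighborhood $U_n \subset \{\theta:\,S_\theta = S_0,\;\|\theta - \theta_0\|_1 \leq n^{-1/2}\} \times \{\eta:\,\eta\text{ close to }\eta_0\text{ in KL}\}$. On $U_n$, apply the misspecified LAN of Theorem~\ref{thm:mLAN} with $s_n = s_0$ and $\epsilon_n = n^{-1/2}$: the remainder $r_n(\theta,\eta)$ has $\E|r_n| = o(1)$, and the centered linear term $\sqrt{n}(\theta-\theta_0)^T\bbG_n\score_{\theta_0,\eta}$ is $O_{P_0}(1)$ uniformly by Cauchy–Schwarz together with $\E\bbG_n\score\bbG_n\score^T \lesssim V_{n,\eta}$. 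Combined with the quadratic term $n(\theta-\theta_0)^TV_{n,\eta}(\theta-\theta_0) = O(1)$ on $U_n$, this gives $\log R_n(\theta,\eta) \geq -C$ uniformly on $U_n$ on an event $\Omega_n$ with $P_0(\Omega_n) = 1 - o(1)$. The prior mass of $U_n$ factors as a $\theta$-part and an $\eta$-part. The $\theta$-part equals $\pi_p(s_0)\binom{p}{s_0}^{-1}$ times the Laplace mass, which by \eqref{eq:pi_p_condition}–\eqref{eq:lambda_condition} and the hypothesis $\lambda\|\theta_0\|_1 = O(s_0\log p)$ is at least $\exp(-C_1 s_0\log p)$. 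The $\eta$-part is the standard symmetrized DP mixture prior-mass bound around $\eta_0$, giving $\exp(-C_2(\log n)^2)$. Hence on $\Omega_n$,
\[
	D_n \;\gtrsim\; \exp\bigl(-C\{s_0\log p + (\log n)^2\}\bigr).
\]

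Putting the two bounds together, on $\Omega_n$,
\[
	\Pi(s_\theta > S \mid \bD_n)
	\;\leq\; \frac{\Pi(s_\theta > S)}{D_n}
	\;\leq\; \exp\!\Bigl(-A_4(S-s_0)\log p + C\{s_0\log p + (\log n)^2\}\Bigr),
\]
and after taking expectation and adding $P_0(\Omega_n^c) = o(1)$, this tends to zero whenever $(S - s_0)A_4\log p \geq 2C s_0\log p + 2C(\log n)^2$. Since $\log p \geq 1$, this is implied by $S \geq K_{\rm dim}\{s_0 \vee (\log n)^2\}$ for $K_{\rm dim} > 1 + 2C/A_4$, proving \eqref{eq:dimension}.

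The main obstacle is the evidence lower bound in the middle paragraph. The two separate prior-mass contributions of order $\exp(-s_0\log p)$ and $\exp(-(\log n)^2)$ are classical in the sparse-$\theta$ literature and the DP-mixture literature respectively, but merging them requires using Theorem~\ref{thm:mLAN} uniformly over a product neighborhood rather than at a single point $(\theta_0,\eta_0)$, and carefully verifying that the remainder and the linear score fluctuation remain $O(1)$ on that neighborhood on a high-probability event. The rest — the prior-tail geometric sum and the final algebra choosing $K_{\rm dim}$ — is routine.
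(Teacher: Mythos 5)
Your overall strategy is the same as the paper's: bound $\E\,\Pi(s_\theta>S\mid \bD_n)$ by a ratio, control the numerator by the prior tail $\Pi(s_\theta>S)\lesssim \pi_p(s_0)p^{-A_4(S-s_0)}$, and produce a denominator lower bound on a high-probability event via Theorem~\ref{thm:mLAN} plus prior-mass estimates (this is exactly the paper's Lemma~\ref{lem:denom_bd}). However, there is a genuine gap in the evidence lower bound and in the final algebra.

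The symmetrized DP-mixture-of-normals prior with compactly supported base measure puts mass $\exp(-c\,n\epsilon_n^2)$ with $\epsilon_n = n^{-1/2}(\log n)^{3/2}$ on the KL neighborhood of $\eta_0$ (see Theorem~6.2 of Ghosal--van der Vaart (2001), used in the paper's Lemma~\ref{lem:denom_bd}). That gives an $\eta$-part of order $\exp\bigl(-C_2(\log n)^3\bigr)$, not the $\exp\bigl(-C_2(\log n)^2\bigr)$ that you assert. With the correct $(\log n)^3$ in the exponent, your closing step ``since $\log p\ge 1$, this is implied by $S\ge K_{\rm dim}\{s_0\vee(\log n)^2\}$'' no longer works: you would need $(S-s_0)\log p\gtrsim (\log n)^3$, and $\log p\ge 1$ alone does not convert a $K_{\rm dim}(\log n)^2$ threshold into a $(\log n)^3$ bound. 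You must invoke the standing assumption $p\ge n^\alpha$, hence $\log p\ge\alpha\log n$, so that $(S-s_0)\log p\gtrsim K_{\rm dim}(\log n)^2\cdot\log n = K_{\rm dim}(\log n)^3$, and then choose $K_{\rm dim}$ large depending on $\alpha$ as well. The paper does precisely this by splitting into the two regimes $(\log n)^3\le s_0\log p$ and $(\log n)^3> s_0\log p$.

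A secondary inaccuracy: the centered linear term $\sqrt{n}(\theta-\theta_0)^T\bbG_n\score_{\theta_0,\eta}$ is not uniformly $O_{P_0}(1)$ on $U_n$. Since $\bbG_n\score_{\theta_0,\eta}$ has $p$ coordinates and one takes a supremum over $\eta\in\cH_0$, the correct uniform order is $O_{P_0}(\sqrt{\log p})$ via Lemma~\ref{lem:score-empirical}; your Cauchy--Schwarz/covariance argument controls an $\ell_2$ moment but not the sup over $p$ directions and over $\cH_0$. This does not break the argument because $\sqrt{\log p}\le s_0\log p$ and the term is absorbed in the denominator exponent, but the claimed $O_{P_0}(1)$ bound and its justification are wrong as stated; you should replace them with the paper's $M_n\sqrt{\log p}$ bound obtained from the bracketing lemma.
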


Compared with Theorem 1 of \cite{castillo2015bayesian}, Theorem \ref{thm:dimension} requires two more conditions: $s_0 \log p = o(n)$ and $\lambda \|\theta_0\|_1 = O(s_0 \log p)$.
The former is required for \eqref{eq:usual-lan} to hold.
The latter condition roughly implies that a heavy tail prior is preferred when $\|\theta_0\|_1$ is large.
The additional $(\log n)^2$ term in \eqref{eq:dimension} comes from the prior concentration rate of $\Pi_\cH$ around a KL neighborhood of $\eta_0$.
In the remainder of this section, we let $s_n = 2K_{\rm dim}\{ s_0 \vee (\log n)^2 \}$.
Define the mean Hellinger distance $d_n$ as
$$
	d_n^2 ((\theta_1, \eta_1), (\theta_2, \eta_2)) = \frac{1}{n} \sum_{i=1}^n
	d_H^2( p_{\theta_1,\eta_1,i}, p_{\theta_2,\eta_2,i}),
$$
where $p_{\theta,\eta,i}(y) = \eta(y - x_i^T\theta)$.
For independent observations, this metric is very useful to study asymptotic behavior of the posterior distribution because it is always possible to construct an exponentially consistent sequence of tests.
See \cite{ghosal2007convergence} and references therein.

\begin{theorem}[Consistency in $d_n$] \label{thm:d_n-consistency}
Suppose that conditions given in Theorem \ref{thm:dimension} hold, 
if furthermore, $1 / \phi(s_n) \leq p$ and $s_n \log p = o(n)$, then
$$
	\E \Pi\bigg( d_n((\theta,\eta), (\theta_0, \eta_0)) > K_{\rm Hel} \sqrt{\frac{s_n \log p}{n}} \;\Big|\; \bD_n \bigg) = o(1)
$$
for some constant $K_{\rm Hel} > 0$.
\end{theorem}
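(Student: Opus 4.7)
The plan is to invoke the Ghosal--van der Vaart testing framework for independent, non-identically distributed observations, with three ingredients: a lower bound on the denominator (integrated likelihood), an entropy bound for a suitable sieve, and an exponentially consistent sequence of tests relative to $d_n$. Write $\epsilon_n = \sqrt{s_n \log p / n}$, and note that by Theorem \ref{thm:dimension} it suffices to integrate the event $\{d_n > K_{\rm Hel}\epsilon_n\}$ against the posterior restricted to the dimension-bounded sieve $\Theta_n = \{\theta : s_\theta \leq s_n\}$; this is a crucial reduction because it turns the effectively infinite-dimensional covering problem for $\theta$ into a covering problem indexed by at most $s_n$-sparse vectors.

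For the denominator, I would combine \eqref{eq:usual-lan} (i.e.\ Theorem \ref{thm:mLAN} applied on the $\ell_1$-ball of radius $n^{-1/2}$) with the known KL-prior mass of the symmetrized DP mixture. On the slab $\{\|\theta-\theta_0\|_1 \leq n^{-1/2}, S_\theta = S_0\}$ the LAN expansion controls $\log R_n(\theta,\eta) - \log R_n(\theta_0,\eta)$ uniformly up to $O_{P_0}(1)$, so
\begin{equation*}
\int R_n(\theta,\eta)\,d\Pi \;\gtrsim\; e^{O_{P_0}(1)} \cdot \Pi_\Theta\!\left(S_\theta=S_0,\,\|\theta-\theta_0\|_1 \leq n^{-1/2}\right) \cdot \int R_n(\theta_0,\eta)\,d\Pi_\cH(\eta).
\end{equation*}
The prior mass factor is bounded below by $\exp(-c s_0 \log p)$ using \eqref{eq:pi_p_condition}, \eqref{eq:lambda_condition} and the hypothesis $\lambda\|\theta_0\|_1 = O(s_0 \log p)$. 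The $\eta$-integral is handled by the standard KL concentration for DP mixtures of Gaussians with compactly supported base measure (as in Chae's work cited here), which yields mass at least $\exp(-c(\log n)^2)$ on small KL neighborhoods of $\eta_0$. Altogether the denominator is bounded below by $\exp(-cn\epsilon_n^2)$ with probability tending to one, matching the choice $n\epsilon_n^2 = s_n \log p$.

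For the sieve and tests, I would use the product sieve $\cF_n = \{(\theta,\eta):s_\theta \leq s_n,\ \|\theta\|_1 \leq M_n,\ \eta \in \cH_0\}$. Since the DP base measure is compactly supported on $[-M,M]\times[\sigma_1,\sigma_2]$, the prior mass outside any polynomially growing $\|\theta\|_1$-bound is negligible, and $\cH_0$ itself has polynomial metric entropy in Hellinger distance. Using the Lipschitz relations \eqref{eq:libschitz} and the condition $1/\phi(s_n) \leq p$, I would bound the $d_n$-entropy of $\cF_n$ restricted to $\{s_\theta \leq s_n\}$ by
\begin{equation*}
\log N(\epsilon_n,\cF_n,d_n) \;\lesssim\; \log\!\binom{p}{s_n} + s_n \log\!\left(\tfrac{M_n}{\phi(s_n)\epsilon_n}\right) + \log N\!\left(\epsilon_n,\cH_0,d_H\right) \;\lesssim\; s_n \log p,
\end{equation*}
which is $\lesssim n\epsilon_n^2$. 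Given such an entropy bound, Lemma 2 of Ghosal--van der Vaart (2007) furnishes tests $\varphi_n$ with $P_0\varphi_n \to 0$ and $\sup_{(\theta,\eta) \in \cF_n,\,d_n > K_{\rm Hel}\epsilon_n} P_{\theta,\eta}^{(n)}(1-\varphi_n) \leq \exp(-c n\epsilon_n^2)$ for $K_{\rm Hel}$ large. The usual decomposition of $\E\Pi(\{d_n > K_{\rm Hel}\epsilon_n\}\mid \bD_n)$ into the test, its complement integrated against the posterior (which telescopes into prior mass on $\cF_n^c \cap \{d_n > K_{\rm Hel}\epsilon_n\}$ divided by denominator), and the event that the denominator is too small then yields the claim.

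The main obstacle is reconciling the high-dimensional sparse prior with the semiparametric piece in both the denominator bound and the entropy bound. Specifically, the LAN expansion in Theorem \ref{thm:mLAN} is only usable on $\|\theta-\theta_0\|_1 \leq \epsilon_n \to 0$, yet the denominator lower bound must withstand the $\binom{p}{s_0}^{-1}$ combinatorial factor from $\Pi_\Theta$; this is precisely why \eqref{eq:pi_p_condition} and $\lambda\|\theta_0\|_1 = O(s_0\log p)$ are needed to keep every $\theta$-side loss at the order $s_0 \log p \leq s_n \log p$. On the entropy side, one must ensure that the $\eta$-covering (which is independent of the sparsity support) and the $\theta$-covering combine without overcounting, which is why the compactness of the support of $F_0$ and the Lipschitz estimates \eqref{eq:libschitz} are indispensable.
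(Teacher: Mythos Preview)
Your proposal follows the Ghosal--van der Vaart template correctly, and your denominator bound is essentially the content of Lemma~\ref{lem:denom_bd}. The route, however, differs from the paper's in one substantive way: the sieve for $\theta$.

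You truncate via a prior-mass argument, taking $\cF_n=\{s_\theta\le s_n,\ \|\theta\|_1\le M_n\}$ with $M_n$ chosen so that the Laplace tail gives $\Pi_\Theta(s_\theta\le s_n,\ \|\theta\|_1>M_n)=o(e^{-Cn\epsilon_n^2})$. This works (one needs $\lambda M_n\gtrsim s_n\log p$, whence $\log M_n\lesssim \log p$ by \eqref{eq:lambda_condition}), but note that in your approach the hypothesis $1/\phi(s_n)\le p$ plays no role, and the factor $\phi(s_n)$ in your displayed entropy bound is unmotivated for your sieve. The paper instead proves a \emph{preliminary $\ell_1$-localization via a direct test}: using the uniform sub-Gaussian tails of $\cH_0$ and Bernstein's inequality it takes $\phi_n=1\{\|\bY-\bX\theta_0\|_2^2/n>\sigma_{\eta_0}^2+1\}$, and combines $\|\bX(\theta-\theta_0)\|_2^2\ge n\phi^2(s_n)\|\theta-\theta_0\|_1^2/s_n$ with the type~II bound to show the posterior concentrates on $\{\|\theta-\theta_0\|_1\le C\sqrt{s_n}/\phi(s_n)\}$. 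It is the radius $\sqrt{s_n}/\phi(s_n)$ of \emph{this} sieve that produces the $s_n\log(1/\phi(s_n))$ term in the entropy, and hence the assumption $1/\phi(s_n)\le p$. Your route trades this model-specific test for a generic prior-tail argument; the paper's route exploits the sub-Gaussian error structure and yields, as a byproduct, a bound on $\|\theta-\theta_0\|_1$ rather than on $\|\theta\|_1$.

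Two small corrections. First, the KL prior mass for the compactly supported DP mixture is $\exp(-c(\log n)^3)$, not $(\log n)^2$ (see Lemma~\ref{lem:denom_bd}); this is still absorbed by $s_n\log p$ because $s_n\ge K(\log n)^2$ and $\log p\gtrsim\log n$. Second, the LAN expansion on $\{\|\theta-\theta_0\|_1\le n^{-1/2}\}$ does not give $O_{P_0}(1)$: the linear term $\sqrt{n}(\theta-\theta_0)^T\bbG_n\score_{\theta_0,\eta}$ is only $O_{P_0}(\sqrt{\log p})$ by Lemma~\ref{lem:score-empirical}. This extra $\sqrt{\log p}$ is harmless in the final bound but should be tracked.
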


\begin{corollary}[$\eta$-consistency] \label{cor:eta-consistency}
Under the conditions of Theorem \ref{thm:d_n-consistency}, it holds that
\be \label{eq:eta-rate}
	\E \Pi\bigg( d_H(\eta, \eta_0) > K_{\rm eta} \sqrt{\frac{s_n \log p}{n}} \;\Big|\; \bD_n \bigg) = o(1)
\ee
for some constant $K_{\rm eta} > 0$.
\end{corollary}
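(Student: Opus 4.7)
The plan is to reduce $d_H(\eta, \eta_0)$ to the mean Hellinger distance $d_n$ from Theorem \ref{thm:d_n-consistency} by exploiting translation invariance of the Hellinger metric and uniform Lipschitz control of translations on $\cH_0$. For $(\theta, \eta)$ in the high-probability set and $a_i = x_i^T(\theta - \theta_0)$, translation invariance gives $d_H(p_{\theta, \eta, i}, p_{\theta_0, \eta_0, i}) = d_H(\eta(\cdot - a_i), \eta_0)$, so by the triangle inequality
\begin{equation*}
d_H(\eta, \eta_0) \leq d_H(\eta(\cdot - a_i), \eta_0) + d_H(\eta, \eta(\cdot - a_i))
\end{equation*}
for every $i$. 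First I would use the Lipschitz estimates \eqref{eq:libschitz} and the uniform polynomial moment bound $\int \eta(x)(1+|x|)^k\,dx \lesssim 1$ over $\eta \in \cH_0$ (a consequence of the compact mixing support $[-M, M] \times [\sigma_1, \sigma_2]$) to derive $d_H^2(\eta, \eta(\cdot - a)) \leq C_1(a^2 \wedge 1)$ uniformly over $\cH_0$. Squaring, averaging over $i$, and applying $(u+v)^2 \leq 2u^2 + 2v^2$ gives the key inequality
\begin{equation*}
d_H^2(\eta, \eta_0) \leq 2\, d_n^2((\theta, \eta), (\theta_0, \eta_0)) + \frac{2 C_1}{n}\sum_{i=1}^n(a_i^2 \wedge 1).
\end{equation*}

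Theorem \ref{thm:d_n-consistency} bounds the first term by $O(s_n\log p/n)$ with posterior probability tending to one, so it suffices to bound $\frac{1}{n}\sum_i (a_i^2 \wedge 1)$ at the same rate. For this I would apply the reverse triangle inequality $d_H(\eta(\cdot - a_i), \eta_0) \geq d_H(\eta_0(\cdot - a_i), \eta_0) - d_H(\eta, \eta_0)$ combined with the Fisher-information lower bound $d_H^2(\eta_0(\cdot - a), \eta_0) \geq c_0(a^2 \wedge 1)$ for the fixed truth $\eta_0$ (valid because $\eta_0$ is a compactly-mixed Gaussian and thus has strictly positive Fisher information). Using the elementary inequality $(h - g)^2 \geq h^2/2 - g^2$ and summing over $i$ yields
\begin{equation*}
\frac{c_0}{2n}\sum_{i=1}^n(a_i^2 \wedge 1) \leq d_n^2 + d_H^2(\eta, \eta_0).
\end{equation*}
Substituting this back into the key inequality and rearranging to isolate $d_H^2(\eta, \eta_0)$ on the left then delivers the claimed rate $d_H^2(\eta,\eta_0)\lesssim s_n\log p/n$, and averaging over the posterior gives \eqref{eq:eta-rate}.

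The principal obstacle is the final absorption step: the constant multiplying $d_H^2(\eta, \eta_0)$ on the right-hand side after substitution must be strictly less than $1$, which is a constraint on the ratio $C_1/c_0$ of the uniform Lipschitz constant over $\cH_0$ to the Fisher information of $\eta_0$. I would handle this by splitting indices by $|a_i|$. For $|a_i| > K_0$, with $K_0$ a fixed threshold depending on $M, \sigma_1, \sigma_2$, the effective supports of $\eta(\cdot - a_i)$ and $\eta_0$ are nearly disjoint, yielding a uniform lower bound $d_H^2(\eta(\cdot - a), \eta_0) \geq c_{K_0} > 0$ for all $\eta \in \cH_0$; hence $|\{i : |a_i| > K_0\}| \leq n d_n^2/c_{K_0} = O(s_n\log p)$, which contributes only $O(s_n\log p/n)$ to $\frac{1}{n}\sum_i(a_i^2 \wedge 1)$. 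For $|a_i| \leq K_0$ the sharp quadratic bounds, with constants that can be tuned by choosing $K_0$ small enough, satisfy the required absorption condition and complete the argument.
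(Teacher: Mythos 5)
Your reduction via translation invariance and Lipschitz control of shifts over $\cH_0$ is in the same spirit as the paper's, but the final absorption step does not close, and tuning $K_0$ cannot repair it. After substituting Step~4 into Step~3, the coefficient of $d_H^2(\eta,\eta_0)$ on the right is $4C_1/c_0$, which you correctly note must be $<1$. However, as $K_0\to 0$ the sharp constants in $d_H^2(\eta,\eta(\cdot-a))\leq C_1 a^2$ and $d_H^2(\eta_0(\cdot-a),\eta_0)\geq c_0 a^2$ tend to fixed multiples of the Fisher informations $\int\score_\eta^2\,\eta$ and $\int\score_{\eta_0}^2\,\eta_0$, and since $\eta_0\in\cH_0$ the uniform upper constant satisfies $C_1\geq c_0$, so $4C_1/c_0\geq 4$. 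Even eliminating the factor $4$ lost in $(u+v)^2\leq 2u^2+2v^2$ and $(h-g)^2\geq h^2/2-g^2$, you would need every $\eta\in\cH_0$ to have strictly smaller Fisher information than $\eta_0$, which is false. There is also an internal tension: the ``effectively disjoint supports'' lower bound needs $K_0$ large, while sharp quadratic approximations need $K_0$ small, and no fixed threshold serves both.

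The paper avoids all this by choosing the small/large threshold \emph{adaptively}, proportional to $d_H(\eta,\eta_0)$ rather than fixed, and by lower-bounding $d_H(\eta_0,\eta(\cdot+y))$ for each shift $y$ individually (its inequality \eqref{eq:inf-eta}), rather than controlling $\bar a^2 = n^{-1}\sum_i(a_i^2\wedge1)$ globally. For $|y|\leq d_H(\eta_0,\eta)/(2C)$ with $C=\sqrt{C_1}$, the plain triangle inequality already gives $d_H(\eta_0,\eta(\cdot+y))\geq d_H(\eta_0,\eta)-C|y|\geq d_H(\eta_0,\eta)/2$, with no absorption constant entering at all. For larger $|y|$, symmetry of both densities and positivity at the origin give a total-variation lower bound $d_H(\eta_0,\eta(\cdot+y))\gtrsim |y|\wedge\delta\gtrsim d_H(\eta_0,\eta)\wedge\delta$. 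Combining the two cases yields $\inf_y d_H(\eta_0,\eta(\cdot+y))\geq D(d_H(\eta_0,\eta)\wedge\delta)$, hence $d_n\geq D(d_H(\eta,\eta_0)\wedge\delta)$ directly. If you prefer to keep your framework of Step~3 plus a bound on $\bar a^2$, the fix is to replace the $\eta_0$-Fisher-information-plus-reverse-triangle lower bound, which reintroduces $d_H(\eta,\eta_0)$ and forces absorption, by the direct uniform lower bound $d_H(\eta(\cdot-a),\eta_0)\gtrsim |a|\wedge\delta$ over $\eta\in\cH_0$ and all $a$ (the same total-variation argument applied to $\eta$ rather than $\eta_0$); this gives $\bar a^2\lesssim d_n^2$ with no $d_H(\eta,\eta_0)$ term, so no absorption is needed.
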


\begin{corollary}[$\theta$-consistency] \label{cor:theta-rate}
Suppose that conditions given in Theorem \ref{thm:d_n-consistency} hold and that $s_n^2 \log p / \phi^2(s_n) = o(n)$.
Then, 
\be\begin{split} \label{eq:theta-rate}
	\E \Pi\bigg(\|\theta-\theta_0\|_1 > K_{\rm theta} \frac{s_n}{\phi(s_n)} \sqrt{\frac{\log p}{n}} \;\Big|\; \bD_n \bigg) = o(1)
	\\
	\E \Pi\bigg(\|\theta-\theta_0\|_2 > K_{\rm theta} \frac{1}{\psi(s_n)} \sqrt{\frac{s_n \log p}{n}} \;\Big|\; \bD_n \bigg) = o(1)
	\\
	\E \Pi\bigg(\|\bX(\theta-\theta_0)\|_2 > K_{\rm theta} \sqrt{s_n \log p} \;\Big|\; \bD_n \bigg) = o(1)
\end{split}\ee
for some constant $K_{\rm theta} > 0$.
\end{corollary}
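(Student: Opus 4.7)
The plan is to combine Theorems \ref{thm:d_n-consistency} and \ref{thm:dimension} with Corollary \ref{cor:eta-consistency} to first obtain high-posterior-probability control of the prediction error $\|\bX(\theta-\theta_0)\|_2$, and then to translate this into $\ell_1$- and $\ell_2$-bounds for $\theta-\theta_0$ via the compatibility and restricted eigenvalue inequalities from Section \ref{sec-prior-design}. Writing $\delta_n = \sqrt{s_n\log p/n}$, the three cited results give $\E\,\Pi(A_n^c\mid \bD_n) = o(1)$ for
\[
A_n = \{(\theta,\eta):\, s_\theta\le s_n,\; d_n((\theta,\eta),(\theta_0,\eta_0))\le K_{\rm Hel}\delta_n,\; d_H(\eta,\eta_0) \le K_{\rm eta}\delta_n\},
\]
so it suffices to work on $A_n$.

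On $A_n$, by the triangle inequality and the shift-invariance of $d_H$,
\[
\frac{1}{n}\sum_{i=1}^n d_H^2\bigl(\eta(\cdot - x_i^T\theta),\,\eta(\cdot - x_i^T\theta_0)\bigr) \le 2d_n^2 + 2d_H^2(\eta,\eta_0) \lesssim \delta_n^2.
\]
I would then invoke a uniform Gaussian-mixture lower bound of the form $d_H^2(\eta(\cdot-t),\eta) \gtrsim \min(t^2,1)$ for $\eta\in\cH_0$ (valid because densities in $\cH_0$ have Fisher information bounded below on the compact location/scale range, and large translations are uniformly Hellinger-separated), to deduce $\sum_{i=1}^n \min(u_i^2,1) \lesssim s_n\log p$ on $A_n$, where $u_i = x_i^T(\theta-\theta_0)$.

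Next I would upgrade this into $\|\bX(\theta-\theta_0)\|_2^2 = \sum_i u_i^2 \lesssim s_n\log p$ via a self-bounding argument. Writing $v = \theta-\theta_0$, on $A_n$ we have $s_v \le s_\theta+s_0 \le 2s_n$; the restricted eigenvalue inequality gives $\|v\|_2 \le \|\bX v\|_2/(\sqrt n\,\psi(2s_n))$, and $|x_{ij}|\le L$ then yields $\max_i |u_i|\le L\sqrt{2s_n}\|v\|_2 \le L\sqrt{2s_n/n}\,\|\bX v\|_2/\psi(2s_n)$. Combining this with $\sum_i\min(u_i^2,1)\lesssim s_n\log p$ and the standing assumption $s_n^2\log p/\phi^2(s_n)=o(n)$, one can close the loop to show $\max_i|u_i|\le 1$ eventually on $A_n$, so that $\min(u_i^2,1)=u_i^2$ for every $i$ and hence $\|\bX v\|_2^2 \lesssim s_n\log p$. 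The three asserted rates then drop out from
\[
\|v\|_1 \le \sqrt{\tfrac{2s_n}{n\phi^2(2s_n)}}\,\|\bX v\|_2, \qquad \|v\|_2 \le \|\bX v\|_2/(\sqrt n\,\psi(2s_n)),
\]
with the factor $\phi(s_n)/\phi(2s_n)$ (resp.\ $\psi(s_n)/\psi(2s_n)$) absorbed into $K_{\rm theta}$.

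The chief obstacle I anticipate is the self-bounding step linking $\sum_i\min(u_i^2,1)\lesssim s_n\log p$ to $\sum_i u_i^2\lesssim s_n\log p$: indices with $|u_i|>1$ contribute nothing to the min-bound yet could in principle dominate $\|\bX v\|_2^2$, and ruling them out requires simultaneous use of the dimension control $s_v\le 2s_n$, the uniform design bound $|x_{ij}|\le L$, and the growth condition $s_n^2\log p/\phi^2(s_n) = o(n)$---precisely the extra hypothesis of this corollary over Theorem \ref{thm:d_n-consistency}. The remaining ingredients---triangle inequality for $d_H$, the Gaussian-mixture Hellinger lower bound, and the compatibility / restricted eigenvalue inequalities---are largely routine.
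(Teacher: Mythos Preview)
Your strategy is sound and close to the paper's, but the self-bounding step does not close under the stated hypothesis. You control $\max_i|u_i|$ via $L\sqrt{2s_n}\,\|v\|_2$ and then $\|v\|_2$ through the \emph{restricted eigenvalue} $\psi(2s_n)$, whereas the corollary only assumes $s_n^2\log p/\phi^2(s_n)=o(n)$. Since $\psi(s)\le\phi(s)$ for every $s$, control of $\phi$ says nothing about $\psi$, and nothing in the hypotheses prevents $\psi(2s_n)$ from decaying fast enough to break your loop. The fix is immediate: bound $\max_i|u_i|\le L\|v\|_1$ instead and use compatibility, $\|v\|_1\le\sqrt{s_n/(n\phi^2(s_n))}\,\|\bX v\|_2$ (with $s_v\le s_n$ once you take $s_\theta\le s_n/2$ as Theorem~\ref{thm:dimension} actually gives). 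With $M=\max_i|u_i|$ and $N_1=|\{i:|u_i|>1\}|\lesssim s_n\log p$ one gets $\|\bX v\|_2^2\lesssim s_n\log p\,(1+M^2)$, hence $M\le\alpha_n\sqrt{1+M^2}$ with $\alpha_n^2\asymp s_n^2\log p/(n\phi^2(s_n))=o(1)$, which forces $M=o(1)$ and closes the loop.

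For comparison, the paper sidesteps the self-bounding by reversing the order: it first reads off $N_{\delta,n}=|\{i:|u_i|>\delta\}|\le n\epsilon_n^2/(C_1^2\delta^2)$ directly from the Hellinger bound, then uses
\[
\sum_{|u_i|\le\delta}u_i^2\;\ge\;\|\bX v\|_2^2-L^2N_{\delta,n}\|v\|_1^2\;\ge\;\Bigl(\tfrac{n\phi^2(s_n)}{s_n}-L^2N_{\delta,n}\Bigr)\|v\|_1^2
\]
to obtain the $\ell_1$-rate first; the prediction-norm and $\ell_2$ rates follow afterwards. The paper also omits your detour through Corollary~\ref{cor:eta-consistency}: it invokes a lower bound $d_H^2(p_{\theta,\eta,i},p_{\theta_0,\eta_0,i})\ge C_1^2(|u_i|\wedge\delta)^2$ valid for all symmetric $\eta\in\cH_0$ (from \cite{chae2016semiparametric}), so the triangle-inequality reduction to pure location shifts of a single $\eta$ is not needed.
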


When $p$ is fixed, a well-known posterior convergence rate $\epsilon_n$ of $\eta$ satisfies $n\epsilon_n^2 \asymp (\log n)^3$ (see \cite{ghosal2001entropies, ghosal2007convergence, chae2016semiparametric}), which agrees with the result of Corollary \ref{cor:eta-consistency}.
If $s_0 \gg (\log n)^2$, the posterior distribution of $\eta$ cannot contract at this rate.
One reason is that prior concentration rate on a KL neighborhood of the true parameter $(\theta_0, \eta_0)$ decreases as $s_0$ increases.
In another viewpoint, the KL divergence of the misspecified model $\eta\mapsto P^{(n)}_{\theta, \eta}$ is not maximized at $\eta_0$ unless $\theta = \theta_0$,
creating an asymptotic bias in estimating $\eta$.

In contrast, as noted in \cite{chae2016semiparametric}, the KL divergence of the misspecified model $\theta\mapsto P_{\theta, \eta}^{(n)}$ is always uniquely maximized at $\theta_0$ provided that $d_H(\eta, \eta_0)$ is sufficiently small.
This means that the recovery rate of $\theta$ is not affected by $\eta$, while the rate of $\eta$ is.
The rate assured by Corollary \ref{cor:theta-rate} is optimal up to the factor $\log p$ provided that $s_0 \gtrsim (\log n)^2$.
This rate agrees with results for the Lasso \cite{buhlmann2011statistics} and for parametric Bayes sparse regression \cite{castillo2015bayesian}.
If $s_0 \lesssim (\log n)^2$, there is an additional $(\log n)^2$ term caused by the unknown error density $\eta$.
As shown in the next subsection, under a slightly stronger condition that $\psi(s_n) \gtrsim 1$, the model dimension can be significantly reduced, which also improves the rate $s_n$ for $s_0 \lesssim (\log n)^2$.

\subsection{Bernstein-von Mises theorem and selection consistency} \label{ssec:bvm}

As noted in \cite{castillo2015bayesian}, even when errors are not normally distributed, Theorem \ref{thm:dimension} and Corollary \ref{cor:theta-rate} can be obtained for  misspecified Gaussian models.
In this subsection, we focus on the asymptotic shape of the marginal posterior distribution of $\theta$.
More specifically, it is shown that the conditional posterior distribution of $\sqrt{n}(\theta_S-\theta_{0,S})$ given that $S_\theta = S$ is asymptotically normal centered on $\Delta_{n,S}$ with the efficient information matrix as variance, where $\Delta_{n,S}$ is the linear estimator with the efficient influence function.
As a consequence, if the true model is consistently selected, a Bayes estimator achieves asymptotic efficiency.
Furthermore, credible sets for $\theta$ provide valid confidence in a frequentist sense.
This assertion holds under the condition that the semi-parametric bias is negligible, which is technically challenging to show in semi-parametric BVM contexts \cite{castillo2015bernstein}.
In our problem, a sufficient condition is that $(s_0 \log p)^6 = O(n^{1-\xi})$ for some $\xi > 0$.

Let
\bean
	\cS_n &=& \left\{S : |S| \leq s_n/2, \; \|\theta_{0,S^c}\|_2 \leq \frac{K_{\rm theta}}{\psi(s_n)} \sqrt{\frac{s_n \log p}{n}} \right\},
	\\
	\cH_n &=& \left\{ \eta\in\cH_0: d_H(\eta,\eta_0) \leq K_{\rm eta} \sqrt{\frac{s_n \log p}{n}}\right\},
\eean
and $\Theta_n$ be the set of every $\theta\in\Theta$ such that $S_\theta \in \cS_n$ and
$\|\theta-\theta_0\|_1$, $\|\theta-\theta_0\|_2$ and $\|\bX(\theta-\theta_0)\|_2$ are bounded by quantities given in \eqref{eq:theta-rate}.
Then, Corollaries \ref{cor:eta-consistency} and \ref{cor:theta-rate} imply that $\Pi(\Theta_n\times\cH_n | \bD_n) \rightarrow 1$ in $P_0^{(n)}$-probability.
Let $G_{n,\eta,S}$ be the $|S|$-dimensional projection of the random vector $\bbG_n \score_{\theta_0, \eta}$ onto $\bbR^{|S|}$, $V_{n,\eta,S}= v_{\eta} \Sigma_S$, and $\cN_{n,\eta,S}$ be the multivariate normal distribution with mean $V_{n,\eta,S}^{-1}G_{n,\eta,S}$ and variance $V_{n,\eta,S}^{-1}$.
For notational convenience, we denote $G_{n,\eta_0,S}, V_{n,\eta_0,S}$ and $\cN_{n,\eta_0,S}$ as $G_{n,S}, V_{n,S}$ and $\cN_{n,S}$, respectively.
Let $\dot L_{n,\eta} = (\score_{\eta} (Y_i - x_i^T \theta_0))_{i=1}^n$ and $\bH_S = \bX_S (\bX_S^T \bX_S)^{-1} \bX_S^T$ be the hat matrix for the model $S$.
The following lemma is useful to characterize the marginal posterior distribution of $\theta$.

\begin{lemma} \label{lem:normal-concentration}
Let $(M_n)$ be any sequence such that $M_n \rightarrow \infty$ and $A_S = \{h\in \bbR^{|S|}: \|h\|_1 > M_n s_n \sqrt{\log p}\}$.
Then,
\be \label{eq:normal-tail}
	\sup_{S\in\cS_n} \sup_{\eta\in\cH_n} \frac{\int_{A_S} \exp\left( h^T G_{n,\eta,S} - \half h^T V_{n,\eta,S}h\right) dh}
	{\int_{\bbR^{|S|}} \exp\left( h^T G_{n,\eta,S} - \half h^T V_{n,\eta,S}h\right) dh} = o_{P_0}(1)
\ee
provided that $s_n\log p = o(n)$ and $\psi(s_n)$ is bounded away from zero.
\end{lemma}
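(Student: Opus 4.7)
The key observation is that the integrand $\exp\!\big(h^T G_{n,\eta,S} - \tfrac{1}{2} h^T V_{n,\eta,S} h\big)$ is an unnormalised Gaussian density: completing the square rewrites it as $\exp\!\big(\tfrac{1}{2} G_{n,\eta,S}^T V_{n,\eta,S}^{-1} G_{n,\eta,S}\big)\cdot\exp\!\big(-\tfrac{1}{2}(h-\mu_{n,\eta,S})^T V_{n,\eta,S}(h-\mu_{n,\eta,S})\big)$ with $\mu_{n,\eta,S}=V_{n,\eta,S}^{-1}G_{n,\eta,S}$. The data-dependent prefactor cancels in the ratio, so \eqref{eq:normal-tail} equals the probability that $Z\in A_S$, where $Z\sim N(\mu_{n,\eta,S},V_{n,\eta,S}^{-1})$. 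It is therefore enough to show, on a data event of $P_0^{(n)}$-probability tending to one, that this Gaussian probability is $o(1)$ uniformly in $(S,\eta)\in\cS_n\times\cH_n$.

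I would begin with the spectral bound $\|V_{n,\eta,S}^{-1}\|_{\mathrm{op}}\lesssim 1$, which follows from $V_{n,\eta,S}=v_\eta\Sigma_S$, the assumption $\psi(s_n)\gtrsim 1$ (so the smallest eigenvalue of $\Sigma_S$ is bounded below), and uniform positivity of $v_\eta=-P_{\eta_0}(\secscore_\eta)$ on $\cH_n$: continuity of $\eta\mapsto v_\eta$ in Hellinger distance—guaranteed by the Lipschitz bounds \eqref{eq:libschitz}—keeps $v_\eta$ close to $v_{\eta_0}>0$ throughout $\cH_n$. Decomposing $Z=\mu_{n,\eta,S}+W$ with $W\sim N(0,V_{n,\eta,S}^{-1})$, the triangle inequality reduces the task to separate bounds on $\|\mu_{n,\eta,S}\|_1$ and $\|W\|_1$. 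For the deterministic part,
\[
\|\mu_{n,\eta,S}\|_1 \leq \sqrt{|S|}\,\|\mu_{n,\eta,S}\|_2 \leq \sqrt{s_n}\,\|V_{n,\eta,S}^{-1}\|_{\mathrm{op}}\,\|G_{n,\eta,S}\|_2 \lesssim s_n\,\|G_{n,\eta,S}\|_\infty,
\]
so what is needed is the empirical process bound $\sup_{\eta\in\cH_n}\max_{j\leq p}|\bbG_n\score_{\theta_0,\eta,j}|=O_{P_0}(\sqrt{\log p})$.

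To prove that bound I would use two structural facts: symmetry of $\eta$ gives $\score_\eta(0)=0$, and the explicit Gaussian-mixture representation yields the uniform linear bound $|\score_\eta(y)|\lesssim 1+|y|$ on $\cH_0$; meanwhile $Y_i-x_i^T\theta_0$ is sub-Gaussian since $\eta_0$ is a mixture on $[-M,M]\times[\sigma_1,\sigma_2]$. Combined with $|x_{ij}|\leq L$, a sub-Gaussian/Bernstein inequality then gives $\bbP(|\bbG_n\score_{\theta_0,\eta,j}|>t)\lesssim \exp(-ct^2)$ for each fixed $(\eta,j)$; a union bound over $j\leq p$, together with a bracketing cover of $\{\score_\eta:\eta\in\cH_0\}$ in $L^2(P_0)$ of size polynomial in $1/\epsilon$ (standard for compactly-parametrised normal mixtures) and the assumption $p\geq n^\alpha$, delivers the $\sqrt{\log p}$ rate uniformly. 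Consequently $\|\mu_{n,\eta,S}\|_1\leq C_1 s_n\sqrt{\log p}$ on the good event.

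For the stochastic part, $\E\|W\|_1\leq\sqrt{|S|}\,(\E\|W\|_2^2)^{1/2}=\sqrt{|S|\cdot\mathrm{tr}(V_{n,\eta,S}^{-1})}\lesssim s_n$, and Gaussian Lipschitz concentration applied to the $\sqrt{|S|}$-Lipschitz map $w\mapsto\|w\|_1$ (variance proxy $\lesssim s_n$) gives $\bbP(\|W\|_1>C_2 s_n+t)\leq 2\exp(-ct^2/s_n)$. Picking $t=(M_n-C_1-C_2)\,s_n\sqrt{\log p}$ yields a per-$(S,\eta)$ tail of order $\exp(-cM_n^2 s_n\log p)$; union-bounding over $|\cS_n|\leq p^{s_n/2}$ supports and a polynomial-size net of $\cH_n$ leaves $o(1)$ once $M_n\to\infty$, completing the argument. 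The principal obstacle is the uniform empirical-process control of $\sup_{\eta\in\cH_n}\max_{j\leq p}|\bbG_n\score_{\theta_0,\eta,j}|$: because $\eta$ ranges over an infinite-dimensional class while $j$ ranges over $p$ coordinates with $p\gg n$, the $\sqrt{\log p}$ rate must be achieved simultaneously in $\eta$ and $j$ by coupling per-coordinate sub-Gaussian deviations with bracketing entropy for the Gaussian-mixture score class. Everything else is clean Gaussian concentration plus the restricted-eigenvalue/Hellinger-closeness control of $V_{n,\eta,S}^{-1}$.
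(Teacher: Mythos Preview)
Your approach is correct and in fact conceptually cleaner than the paper's. Both proofs rest on the same pivotal empirical-process estimate $\sup_{\eta\in\cH_0}\|\bbG_n\score_{\theta_0,\eta}\|_\infty=O_{P_0}(\sqrt{\log p})$ (the paper isolates this as a separate lemma, proved exactly via the bracketing-plus-envelope argument you sketch). Where the routes diverge is downstream: the paper does \emph{not} complete the square. Instead it shows directly that on a good event $h^T G_{n,\eta,S}-\tfrac12 h^T V_{n,\eta,S}h\le -C\|h\|_2^2$ for all $h\in A_S$ (because the linear term is $O(\|h\|_1\sqrt{\log p})$ while the quadratic term dominates via the compatibility bound $h^T V_{n,\eta,S}h\ge v_\eta\phi^2(s_n)\|h\|_1^2/s_n$), then integrates $e^{-C\|h\|_2^2}$ over $\{\|h\|_2>M_n\sqrt{s_n\log p}\}$ by a dyadic shell decomposition, and lower-bounds the denominator separately via the Gaussian normalising constant and a trace bound on $|V_{n,\eta,S}|$. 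Your completion-of-the-square reduction to $\Pr(\|Z\|_1>M_n s_n\sqrt{\log p})$ for $Z\sim N(\mu_{n,\eta,S},V_{n,\eta,S}^{-1})$, followed by a mean/fluctuation split and Gaussian Lipschitz concentration, is a more transparent route to the same conclusion.

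One point worth cleaning up: the union bounds at the end over $\cS_n$ and a net of $\cH_n$ are superfluous. The Gaussian tail $\Pr(\|W\|_1>t)$ you compute is not a data-probability but the very integral ratio in \eqref{eq:normal-tail}; its bound $\exp(-cM_n^2 s_n\log p)$ depends on $(S,\eta)$ only through $|S|\le s_n/2$ and $v_\eta$, both uniformly controlled, so the supremum over $(S,\eta)$ is immediate. Likewise the good data-event $\{\sup_{\eta}\|\bbG_n\score_{\theta_0,\eta}\|_\infty\le C\sqrt{\log p}\}$ already handles all $S$ and $\eta$ simultaneously. Dropping the union bounds simplifies the argument without loss.
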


Denote the centered and scaled coefficients as $h = \sqrt{n}(\theta-\theta_0)$ and $h_S = \sqrt{n}(\theta_S-\theta_{0,S})$.
By Lemma \ref{lem:normal-concentration}, it holds for every measurable $B \subset \bbR^{|S|}$ that
$$
	\frac{\int_B \exp\left( h^T G_{n,\eta,S} - \half h^T V_{n,\eta,S}h\right) dh}
	{\int_{\bbR^{|S|}} \exp\left( h^T G_{n,\eta,S} - \half h^T V_{n,\eta,S}h\right) dh} 
	\asymp \frac{\int_B \exp\left( h^T G_{n,\eta,S} - \half h^T V_{n,\eta,S}h\right) dh}
	{\int_{A_S^c} \exp\left( h^T G_{n,\eta,S} - \half h^T V_{n,\eta,S}h\right) dh},
$$
so the density proportional to $e^{h^T G_{n,\eta,S} - \half h^T V_{n,\eta,S}h} 1_{A_S^c}$ is approximately that of the normal distribution $N_{n,\eta,S}$.
Also, $\lambda s_n \sqrt{\log p / n} = o(1)$ implies that 
$$
	\sup_{\theta\in\Theta_n} \left|\log \frac{g_{S_\theta}(\theta_{S_\theta})}{g_{S_\theta}(\theta_{0,S_\theta})} \right| = o(1),
$$
which roughly means that the effect of the prior $g_S$ vanishes as $n$ increases.
As a consequence, if the remainder term of the misspecified LAN expansion is of order $o_{P_0}(1)$, then the total variation distance between $\cN_{n,\eta,S}$ and the conditional posterior distribution of $h_S$ given $\eta$ and $S_\theta=S$ converges to zero in probability.
A sufficient condition for this is that $s_n^5 (\log p)^3 = o(n)$ by Theorem \ref{thm:mLAN}.
Note that it is shown in \cite{chae2016semiparametric} that $\sup_{\eta\in\cH_n}|G_{n,\eta,S_n} - G_{n,\eta_0,S_n}| = o_{P_0}(1)$ and $\sup_{\eta\in\cH_n} |v_\eta - v_{\eta_0}| = o(1)$ for every nonrandom sequence of models $(S_n)$ with $|S_n| = O(1)$.
It follows that $d_V(\cN_{n,\eta,S_n}, \cN_{n,\eta_0,S_n}) = o_{P_0}(1)$.
However, $d_V(\cN_{n,\eta,S}, \cN_{n,\eta_0,S})$, for large $|S|$, may not be close to zero due to the asymptotic biases $G_{n,\eta,S} - G_{n,\eta_0,S}$ and $V_{n,\eta,S}-V_{n,\eta_0,S}$, whose sizes are roughly proportional to the dimension $|S|$.
These biases vanish if $(s_n \log p)^6 = O(n^{1-\xi})$ for some $\xi > 0$ as in the following refined version of the misspecified LAN.

\begin{theorem} \label{thm:LAN}
Let
\bean
	r_n(\theta,\eta) = L_n(\theta,\eta) - L_n(\theta_0,\eta)
	- \sqrt{n}(\theta-\theta_0)^T \bbG_n \score_{\theta_0,\eta_0}
	+ \frac{n}{2} (\theta-\theta_0)^T V_{n,\eta_0}(\theta-\theta_0).
\eean
Then, there exists a sequence $M_n \rightarrow \infty$ such that
\bean
	\E \bigg( \sup_{\theta\in M_n\Theta_n} \sup_{\eta\in\cH_n} |r_n(\theta,\eta)| \bigg)
	= o(1)
\eean
provided that $(s_n\log p)^6  = O(n^{1-\xi})$ for some $\xi > 0$.
\end{theorem}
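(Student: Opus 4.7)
The plan is to reduce Theorem \ref{thm:LAN} to Theorem \ref{thm:mLAN} by writing $r_n = \widetilde r_n + \Delta_n^{\mathrm{lin}} + \Delta_n^{\mathrm{quad}}$, where $\widetilde r_n$ is the remainder from Theorem \ref{thm:mLAN} and the two extra pieces come from swapping the score and information at $\eta$ for those at $\eta_0$:
\begin{align*}
\Delta_n^{\mathrm{lin}}(\theta,\eta) &= \sqrt{n}(\theta-\theta_0)^T \bbG_n\bigl(\score_{\theta_0,\eta} - \score_{\theta_0,\eta_0}\bigr), \\
\Delta_n^{\mathrm{quad}}(\theta,\eta) &= \tfrac{n}{2}(\theta-\theta_0)^T\bigl(V_{n,\eta_0} - V_{n,\eta}\bigr)(\theta-\theta_0).
\end{align*}
It then suffices to show that each of $\widetilde r_n$, $\Delta_n^{\mathrm{lin}}$, $\Delta_n^{\mathrm{quad}}$ is $o(1)$ in expectation uniformly over $M_n\Theta_n\times\cH_n$, for a suitable slowly diverging $M_n$.

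For $\widetilde r_n$ I would apply Theorem \ref{thm:mLAN} with $\epsilon_n = M_n K_{\rm theta}(s_n/\phi(s_n))\sqrt{\log p/n}$, using the available bound $\|\bX(\theta-\theta_0)\|_2^2 \lesssim M_n^2 s_n \log p$ on $M_n\Theta_n$; since $\psi(s_n)$ (and hence $\phi(s_n)$) is bounded away from zero by assumption, both $n\epsilon_n^2\delta_n$ and $\epsilon_n \sup \|\bX(\theta-\theta_0)\|_2^2$ reduce to polynomials in $M_n$ times $s_n^{5/2}(\log p)^{3/2}/\sqrt{n}$ and $s_n^2(\log p)^{3/2}/\sqrt{n}$, hence are $o(1)$ when $(s_n\log p)^6 = O(n^{1-\xi})$ and $M_n$ diverges slowly enough. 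For $\Delta_n^{\mathrm{quad}}$ I would use the factorization $V_{n,\eta} - V_{n,\eta_0} = (v_\eta - v_{\eta_0})\Sigma$ together with a Lipschitz estimate $|v_\eta - v_{\eta_0}| \lesssim d_H(\eta,\eta_0)$, which follows from the identity $v_\eta = -P_{\eta_0}\secscore_\eta$ and the modulus \eqref{eq:libschitz} applied to $\secscore_\eta$. Multiplying by $\tfrac{1}{2}\|\bX(\theta-\theta_0)\|_2^2 \lesssim M_n^2 s_n\log p$ and the Hellinger radius $\sqrt{s_n\log p/n}$ of $\cH_n$ gives a bound of order $M_n^2 s_n^{3/2}(\log p)^{3/2}/\sqrt{n}$, again $o(1)$ under the stated sparsity regime.

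The heart of the argument, and the main obstacle, is the linear piece $\Delta_n^{\mathrm{lin}}$. A Hölder bound gives
$$|\Delta_n^{\mathrm{lin}}| \leq \sqrt{n}\,\|\theta-\theta_0\|_1 \cdot \max_{j\leq p}\,\sup_{\eta\in\cH_n}|\bbG_n g_{\eta,j}|,$$
where $g_{\eta,j}(x,y) = (\score_\eta-\score_{\eta_0})(y-x^T\theta_0)\,x_j$ and the first factor is $\lesssim M_n s_n\sqrt{\log p}/\phi(s_n)$. For the second factor I would combine three ingredients: (i) an $L_2$-Lipschitz estimate $\|\score_\eta - \score_{\eta_0}\|_{L_2(\eta_0)} \lesssim d_H(\eta,\eta_0)$ (up to logarithmic factors) that exploits the Gaussian-mixture smoothness encoded in \eqref{eq:libschitz}; (ii) a bracketing entropy bound $\log N_{[]}(\epsilon,\{\score_\eta:\eta\in\cH_n\},L_2)\lesssim (\log(1/\epsilon))^{k}$ for the symmetrized DP-mixture score class, in the spirit of Ghosal--Van der Vaart; and (iii) a Bernstein/Hanson--Wright exponential inequality, as flagged in the introduction, to convert $L_2$-size into a high-probability uniform bound across the $p$ coordinates, picking up only an extra $\sqrt{\log p}$ factor despite the non-Gaussian errors. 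The resulting estimate is of order $\sqrt{s_n(\log p)^2/n}$ up to polylogs; multiplying by $\sqrt{n}\|\theta-\theta_0\|_1$ yields a bound that is $o(1)$ precisely in the regime $(s_n\log p)^6 = O(n^{1-\xi})$, which is exactly how the sixth-power condition appears. The delicate point is establishing the entropy bound together with a tight enough $L_2$-Lipschitz modulus on $\cH_n$, so that after Hanson--Wright no additional $s_n$ factors creep in through the uniform control over $j\leq p$ and $\eta\in\cH_n$.
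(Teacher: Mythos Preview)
Your decomposition $r_n = \widetilde r_n + \Delta_n^{\mathrm{lin}} + \Delta_n^{\mathrm{quad}}$ and your treatment of $\widetilde r_n$ via Theorem~\ref{thm:mLAN} are exactly what the paper does. The gap is in the moduli you invoke for the two replacement pieces.

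You claim $|v_\eta - v_{\eta_0}| \lesssim d_H(\eta,\eta_0)$ and $\|\score_\eta - \score_{\eta_0}\|_{L_2(\eta_0)} \lesssim d_H(\eta,\eta_0)$, and say both follow from the Lipschitz bounds \eqref{eq:libschitz}. They do not: \eqref{eq:libschitz} gives smoothness of $\ell_\eta,\score_\eta,\secscore_\eta$ in the \emph{argument} $y$, uniformly in $\eta$, but says nothing about continuity in the \emph{parameter} $\eta$. Converting a Hellinger bound on densities into an $L_2$ bound on score differences is genuinely hard here, and the paper devotes a separate, fairly intricate lemma (Lemma~\ref{lem:score-unif-rate}) to it. That lemma only achieves
\[
\int \sup_{\eta\in\cH_n}(\score_\eta-\score_{\eta_0})^2\,dP_{\eta_0}\ \lesssim\ \epsilon_n^{4/5-\zeta},
\]
via an iterative bootstrap argument exploiting the Gaussian-mixture structure. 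The exponent $4/5$ (hence $2/5$ on the $L_2$ norm) is precisely what generates the sixth-power condition: combining it with $\sqrt n\|\theta-\theta_0\|_1\lesssim M_n s_n\sqrt{\log p}$ and a bracketing maximal inequality yields a leading term $\asymp (s_n\log p)^{6/5}/n^{1/5}$. Your own bookkeeping confirms this: with the linear Lipschitz modulus you assert, the bound on $\Delta_n^{\mathrm{lin}}$ would be of order $s_n^{3/2}(\log p)^{3/2}/\sqrt n$, which is $o(1)$ already under $(s_n\log p)^3=o(n)$, a strictly weaker hypothesis than $(s_n\log p)^6=O(n^{1-\xi})$. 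So your claim that the sixth power ``is exactly how'' your estimate comes out is internally inconsistent; the sixth power is an artifact of the $4/5$ exponent you have not accounted for.

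A secondary point: the paper controls $\sup_{j\le p}\sup_{\eta\in\cH_n}|\bbG_n g_{\eta,j}|$ by the bracketing maximal inequality of Corollary~\ref{cor:maximal_bracket}, not by Hanson--Wright. The envelope of the class $\{M_n s_n\sqrt{\log p}\, e_j^T(\score_{\theta_0,\eta}-\score_{\theta_0,\eta_0}):j\le p,\eta\in\cH_n\}$ is exactly where Lemma~\ref{lem:score-unif-rate} enters, and the bracketing entropy of $\{\score_\eta:\eta\in\cH_n\}$ is handled via the Hellinger entropy of $\cH_0$. The Hanson--Wright inequality is used elsewhere in the paper (for selection), not here.
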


Let $w_S$ be the posterior probability of a model $S$ given as
\bean
	w_S &\propto&
	\frac{\pi_p(|S|)}{\binom{p}{|S|}} \int\int \exp\left\{ L_n(\widetilde\theta_S,\eta) - L_n(\theta_0,\eta_0) \right\} d\Pi_\cH(\eta) g_S(\theta_S) d\theta_S
\eean
for every $S \subset \{1, \ldots, p\}$.
The marginal posterior distribution of $\theta$ can be expressed as a mixture form
$$
	d\Pi(\theta | \bD_n) = \sum_{S \subset {\{1, \ldots, p\}}} w_S dQ_S(\theta_S) d\delta_0(\theta_{S^c}),
$$
where for every measurable $B \subset \bbR^{|S|}$,
$$
	Q_S(B) = \frac{\int_B\int \exp\left\{ L_n(\widetilde\theta_S,\eta) - L_n(\theta_0,\eta_0) \right\} d\Pi_\cH(\eta) g_S(\theta_S) d\theta_S}
	{\int\int \exp\left\{ L_n(\widetilde\theta_S,\eta) - L_n(\theta_0,\eta_0) \right\} d\Pi_\cH(\eta) g_S(\theta_S) d\theta_S}.
$$
Each mixture component $Q_S$ can be approximated by a normal distribution by the semi-parametric BVM theorem.
Let $\Pi^\infty$ be the probability measure on $\bbR^p$ defined as
$$
	d\Pi^\infty(\theta|\bD_n) = \sum_{S\subset {\{1, \ldots, p\}}} n^{-|S|/2} w_S d\cN_{n,S}(h_S) d\delta_0(\theta_{S^c}),
$$
where $n^{-|S|/2}$ is the determinant of the Jacobian matrix.

\begin{theorem}[Bernstein-von Mises] \label{thm:BvM}
Suppose that conditions given in Theorem \ref{thm:dimension} hold, $(s_n \log p)^6 = O(n^{1-\xi})$ for some $\xi > 0$, $\lambda s_n \sqrt{\log p / n} = o(1)$ and $\psi(s_n)$ is bounded away from 0.
Then,
\be\label{eq:bvm2}
	d_V(\Pi(\cdot|\bD_n), \Pi^\infty(\cdot|\bD_n)) = o_{P_0}(1).
\ee
\end{theorem}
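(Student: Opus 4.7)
The plan is to exploit the fact that both $\Pi(\cdot|\bD_n)$ and $\Pi^\infty(\cdot|\bD_n)$ are mixtures over subsets $S$ with identical posterior weights $w_S$; convexity of total variation then yields
\bean
	d_V\big(\Pi(\cdot|\bD_n),\, \Pi^\infty(\cdot|\bD_n)\big) \leq \sum_{S} w_S\, d_V\big(Q_S^{(h)},\, \cN_{n,S}\big),
\eean
where $Q_S^{(h)}$ is the pushforward of $Q_S$ under the reparametrization $\theta_S \mapsto h_S = \sqrt{n}(\theta_S - \theta_{0,S})$ (the Jacobian $n^{-|S|/2}$ is absorbed into $\Pi^\infty$ by construction). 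Theorem \ref{thm:dimension} together with Corollary \ref{cor:theta-rate} gives $\sum_{S\notin\cS_n} w_S = o_{P_0}(1)$, so it suffices to control the sum restricted to $S\in\cS_n$.

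Fix $S\in\cS_n$. In the ratio defining $Q_S$ the factor $\exp\{L_n(\theta_0,\eta) - L_n(\theta_0,\eta_0)\}$ is independent of $\theta_S$ and cancels, so the integrand (before integration over $\eta$) can be taken to be $\exp\{L_n(\widetilde\theta_S,\eta) - L_n(\theta_0,\eta)\}$. Apply the refined misspecified LAN of Theorem \ref{thm:LAN}, which under the hypothesis $(s_n\log p)^6 = O(n^{1-\xi})$ produces a sequence $M_n\to\infty$ with
\bean
	L_n(\widetilde\theta_S,\eta) - L_n(\theta_0,\eta) = h_S^T G_{n,S} - \half h_S^T V_{n,S} h_S + r_n(\widetilde\theta_S,\eta),
\eean
where $\sup_{\theta\in M_n\Theta_n}\sup_{\eta\in\cH_n} |r_n| = o_{P_0}(1)$. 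Crucially the leading quadratic does not depend on $\eta$, so the $\Pi_\cH$-integral (restricted to $\cH_n$, which carries almost all posterior mass by Corollary \ref{cor:eta-consistency}) equals $\exp\{h_S^T G_{n,S} - \half h_S^T V_{n,S} h_S\}\cdot(1+o_{P_0}(1))$, and this common factor cancels between numerator and denominator of $Q_S$.

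Lemma \ref{lem:normal-concentration} shows $\cN_{n,S}(A_S) = o_{P_0}(1)$ for $A_S = \{\|h_S\|_1 > M_n s_n\sqrt{\log p}\}$, and the analogous bound $\sum_{S\in\cS_n} w_S Q_S^{(h)}(A_S) = o_{P_0}(1)$ follows from Corollary \ref{cor:theta-rate} combined with the standard prior-tail estimate used to bound the posterior integral on $\Theta_n^c$. On $A_S^c$, the Laplace-prior ratio satisfies
\bean
	\big|\log\{g_S(\theta_{0,S}+h_S/\sqrt{n})/g_S(\theta_{0,S})\}\big| \leq \lambda\|h_S\|_1/\sqrt{n} \leq M_n\lambda s_n\sqrt{\log p/n} = o(1),
\eean
using $\lambda s_n\sqrt{\log p/n}=o(1)$ and by choosing $M_n\to\infty$ slowly enough (compatibly with Theorem \ref{thm:LAN}); hence $g_S$ is asymptotically constant on $A_S^c$ and cancels too. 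What remains is that the restriction of $Q_S^{(h)}$ to $A_S^c$ coincides with the restriction of $\cN_{n,S}$ to $A_S^c$ up to a multiplicative factor $1+o_{P_0}(1)$, giving $d_V(Q_S^{(h)},\cN_{n,S}) = o_{P_0}(1)$ uniformly in $S\in\cS_n$ and hence \eqref{eq:bvm2}.

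The main obstacle is delivering all three approximations---the LAN replacement, the cancellation of the $\eta$-integration, and the flattening of the prior ratio---\emph{uniformly} across the combinatorially large family $\cS_n$ and uniformly over $\eta\in\cH_n$, while synchronizing the truncation level $M_n s_n\sqrt{\log p}$ so that the Laplace ratio stays flat, both $\cN_{n,S}(A_S)$ and $Q_S^{(h)}(A_S)$ vanish, and the value of $M_n$ is the one produced by Theorem \ref{thm:LAN}. This is precisely where the condition $(s_n\log p)^6 = O(n^{1-\xi})$ is consumed: it is the regime in which the semi-parametric biases $G_{n,\eta,S}-G_{n,S}$ and $V_{n,\eta,S}-V_{n,S}$, whose sizes scale with $|S|$, become small enough to be absorbed into $r_n$ uniformly across every relevant model.
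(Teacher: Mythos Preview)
Your strategy is correct and slightly more direct than the paper's: since $\Pi(\cdot\mid\bD_n)$ and $\Pi^\infty(\cdot\mid\bD_n)$ share the same mixing weights $w_S$, convexity of $d_V$ gives $d_V(\Pi,\Pi^\infty)\le\sum_S w_S\,d_V(Q_S^{(h)},\cN_{n,S})$ immediately, whereas the paper passes through restricted-prior posteriors $\widetilde\Pi,\widetilde\Pi^\infty$ on $M_n\Theta_n\times\cH_n$ and must compare two different weight sequences $\widetilde w_S,\widetilde w_S^\infty$ via Lemma~\ref{lem:d_v-partition}. The key ingredients you invoke (Theorem~\ref{thm:LAN}, Lemma~\ref{lem:normal-concentration}, the Laplace-prior flattening) are exactly those used in the paper.

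There is, however, a genuine gap in the step where you restrict the $\Pi_\cH$-integral to $\cH_n$. Corollary~\ref{cor:eta-consistency} controls only the \emph{marginal} posterior mass $\Pi(\eta\notin\cH_n\mid\bD_n)$; it does not make the ratio
\[
\frac{\int_{\cH_n^c}\exp\{L_n(\widetilde\theta_S,\eta)-L_n(\theta_0,\eta_0)\}\,d\Pi_\cH(\eta)}{\int_\cH\exp\{L_n(\widetilde\theta_S,\eta)-L_n(\theta_0,\eta_0)\}\,d\Pi_\cH(\eta)}
\]
small \emph{uniformly in $\theta_S\in A_S^c$}, and since Theorem~\ref{thm:LAN} gives no control of $r_n$ on $\cH_n^c$, this $\cH_n^c$-piece is not bounded pointwise in $\theta_S$. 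Consequently the asserted uniform conclusion $d_V(Q_S^{(h)},\cN_{n,S})=o_{P_0}(1)$ does not follow from what you wrote (and in fact only the $w_S$-weighted sum is needed, but even that requires an extra step). The paper's remedy is to restrict the \emph{prior} to $M_n\Theta_n\times\cH_n$ first, noting $d_V(\Pi(\cdot\mid\bD_n),\widetilde\Pi(\cdot\mid\bD_n))\le 2\,\Pi((M_n\Theta_n\times\cH_n)^c\mid\bD_n)=o_{P_0}(1)$; thereafter every $\eta$-integral lives on $\cH_n$ and LAN applies throughout, yielding the uniform conditional statement \eqref{eq:cond-bvm}. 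Equivalently, within your framework, introduce $\hat Q_S=\Pi(\theta_S\in\cdot\mid\bD_n,\,S_\theta=S,\,\eta\in\cH_n)$; then $\sum_S w_S\,d_V(Q_S,\hat Q_S)\le 2\,\Pi(\eta\notin\cH_n\mid\bD_n)=o_{P_0}(1)$, and your density-ratio argument applies cleanly to $\hat Q_S$, repairing the gap while preserving your shared-weights simplification.
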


Since posterior mass concentrates on $\Theta_n \times \cH_n$, if $\lambda s_n \sqrt{\log p / n} = o(1)$ and $S\supset S_0$, then $w_S$ can be approximated as
\be\begin{split}\label{eq:w-hat-def}
	\hat w_S &\propto \frac{\pi_p(|S|)}{\binom{p}{|S|}} \int \exp\left\{ \sqrt{n}(\theta_S-\theta_{0,S})^T G_{n,S} - \hbox{$n \over 2$} (\theta_S-\theta_{0,S})^T V_{n,S} (\theta_S-\theta_{0,S})\right\} g_S(\theta_{0,S}) d\theta_S
	\\
	&\propto \frac{\pi_p(|S|)}{\binom{p}{|S|}} \left(\frac{\lambda}{2}\right)^{|S|} \left(\frac{2\pi}{v_{\eta_0}}\right)^{|S|/2} |\bX_S^T \bX_S|^{-1/2} \exp \left( \hbox{$1\over 2 v_{\eta_0}$} \|\bH_S \dot L_{n,\eta_0}\|_2^2\right)
\end{split}\ee
by the LAN and Lemma \ref{lem:normal-concentration}.

\begin{theorem}[Selection] \label{thm:selection}
Suppose that conditions given in Theorem \ref{thm:dimension} hold.
Also, assume that $(s_n\log p)^6  = O(n^{1-\xi})$ for some $\xi > 0$, $\lambda s_n \sqrt{\log p /n} = o(1)$, and $\psi(s_n)$ is bounded away from 0.
Then, there exists a constant $K_{\rm sel}$, depending only on $\eta_0$, such that
$$
	\E \Pi(S_\theta \supsetneq S_0 | \bD_n) \rightarrow 0
$$
provided that $A_4 > K_{\rm sel}$.
\end{theorem}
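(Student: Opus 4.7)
The strategy is to bound $\E\Pi(S_\theta \supsetneq S_0 \mid \bD_n)$ by first reducing to moderate-sized supersets, then replacing each posterior model weight $w_S$ by its closed-form approximation $\hat w_S$ from \eqref{eq:w-hat-def}, and finally analyzing the explicit ratio $\hat w_S/\hat w_{S_0}$ via the prior penalty and Hanson--Wright concentration. By Theorem~\ref{thm:dimension}, $\E\Pi(s_\theta > s_n/2\mid \bD_n) = o(1)$, so I restrict to $S \supsetneq S_0$ with $|S|\le s_n$. Since $\Pi(S_\theta \supsetneq S_0 \mid \bD_n) = w_{S_0} \sum_{S \supsetneq S_0}(w_S/w_{S_0}) \le \sum_{S \supsetneq S_0}(w_S/w_{S_0})$, it suffices to prove $\E \sum_{S \supsetneq S_0,\,|S|\le s_n}(w_S/w_{S_0}) = o(1)$.

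The first substantive step replaces $w_S/w_{S_0}$ by $\hat w_S/\hat w_{S_0}$. On the effective posterior support $\Theta_n \times \cH_n$, Theorem~\ref{thm:LAN} yields
\begin{equation*}
  L_n(\widetilde\theta_S,\eta) - L_n(\theta_0,\eta_0) = \bigl[L_n(\theta_0,\eta) - L_n(\theta_0,\eta_0)\bigr] + \sqrt n\,(\widetilde\theta_S - \theta_0)^T \bbG_n \score_{\theta_0,\eta_0} - \tfrac{n}{2}(\widetilde\theta_S - \theta_0)^T V_{n,\eta_0}(\widetilde\theta_S - \theta_0) + o_{P_0}(1).
\end{equation*}
The bracketed term depends only on $\eta$, so when integrated against $\Pi_\cH$ it produces a common factor that cancels in every ratio $w_S/w_{S_0}$. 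Lemma~\ref{lem:normal-concentration} extends the Gaussian integral in $\theta_S$ to all of $\bbR^{|S|}$, while $\lambda s_n\sqrt{\log p/n} = o(1)$ makes $g_S(\theta_S) = (1+o(1))(\lambda/2)^{|S|}$ on the effective region. Hence $w_S/w_{S_0} = (1+o_{P_0}(1))\,\hat w_S/\hat w_{S_0}$ uniformly over the relevant $S$.

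Combining \eqref{eq:w-hat-def} and \eqref{eq:pi_p_condition} with $\lambda \le \sqrt{n\log p}$, the determinant inequality $|\bX_{S_0}^T\bX_{S_0}|^{1/2}/|\bX_S^T\bX_S|^{1/2} \le (n\psi(s_n)^2)^{-k/2}$ (applying $\psi(s_n)$ to the minimal-norm representation of $(I-\bH_{S_0})\bX_T v$ as $\bX\beta$ with $\text{supp}(\beta)\subset S$), and the elementary identity $\binom{p-s_0}{k}\binom{p}{s_0}/\binom{p}{s_0+k} = \binom{s_0+k}{k}\le (es_n)^k$, one finds for $k := |S|-s_0 \ge 1$:
\begin{equation*}
  \sum_{S\supsetneq S_0,\,|S|=s_0+k} \frac{\hat w_S}{\hat w_{S_0}} \le \Bigl(\frac{C_1 s_n \sqrt{\log p}}{p^{A_4}\psi(s_n)}\Bigr)^{k}\, \max_{S\supsetneq S_0,\,|S|=s_0+k} \exp\!\bigl(T_S/2\bigr),\qquad T_S := \frac{\|(\bH_S - \bH_{S_0})\dot L_{n,\eta_0}\|_2^2}{v_{\eta_0}}.
\end{equation*}

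The main obstacle is controlling $\max_S \exp(T_S/2)$, an exponential of a high-dimensional quadratic form. Under $P_0$, the coordinates of $\dot L_{n,\eta_0}/\sqrt{v_{\eta_0}}$ are i.i.d., mean zero (since $\score_{\eta_0}$ is odd by symmetry of $\eta_0$), unit variance, and subgaussian: by \eqref{eq:libschitz} $|\score_\eta(y)|$ is dominated by a linear function of $|y|$ on $\cH_0$, and bounded-mean, bounded-variance Gaussian mixtures are subgaussian. Since $\bH_S - \bH_{S_0}$ is a rank-$k$ orthogonal projection, the Hanson--Wright inequality gives $\Pr(T_S > k + t) \le 2\exp(-c_0\min(t^2/k,\,t))$ for some $c_0 = c_0(\eta_0) > 0$. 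Taking $t = (c_0^{-1}+1)k\log p$ and union-bounding over the $\binom{p-s_0}{k} \le p^k$ supersets yields $\Pr(E_k^c) \le 2p^{-c_0 k}$ for the good event $E_k = \{\max_S T_S \le \kappa\, k\log p\}$ with $\kappa := c_0^{-1}+2$. On $\bigcap_k E_k$ the displayed bound becomes $(C_2 s_n\sqrt{\log p}\,p^{\kappa/2 - A_4})^k$ (absorbing $\psi(s_n)^{-1}$ into $C_2$ using the lower bound on $\psi(s_n)$), which sums to $o(1)$ provided $A_4 > K_{\rm sel} := \kappa/2 + 1/(6\alpha)$, since $s_n \le p^{1/(6\alpha)-\xi/(6\alpha)+o(1)}$ by the hypothesis $(s_n\log p)^6 = O(n^{1-\xi})$ and $p \ge n^\alpha$. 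On the complementary event $\Pi(\cdot\mid\bD_n) \le 1$ deterministically and the total bad probability $\sum_k 2p^{-c_0 k} = O(p^{-c_0})$ also tends to zero, completing the proof.
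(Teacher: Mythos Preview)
Your argument follows essentially the same route as the paper: reduce to supersets of moderate size, replace the posterior model weights by the closed-form Gaussian approximations $\hat w_S$ via the LAN expansion and Lemma~\ref{lem:normal-concentration}, and then control $\hat w_S/\hat w_{S_0}$ using the prior decay, the determinant bound, and Hanson--Wright concentration on the projection $\|(\bH_S-\bH_{S_0})\dot L_{n,\eta_0}\|_2^2$.

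One point deserves more care. Your passage from $w_S/w_{S_0}$ to $\hat w_S/\hat w_{S_0}$ invokes Theorem~\ref{thm:LAN} ``on the effective posterior support $\Theta_n\times\cH_n$,'' but the integrals defining $w_S$ range over all of $\bbR^{|S|}\times\cH$, and the LAN remainder is only controlled on $M_n\Theta_n\times\cH_n$. The paper handles this by first passing to the restricted and renormalized posterior $\widetilde\Pi$ on $M_n\Theta_n\times\cH_n$ (using $d_V(\Pi,\widetilde\Pi)=o_{P_0}(1)$ from the consistency results), so that all subsequent integrals are automatically over the region where the LAN applies; the upper bound on the numerator is then immediate, and the lower bound on the denominator uses \eqref{eq:normal-tail2}. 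Your sketch can be repaired the same way, but as written the upper bound on $w_S$ is not justified.

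A second, smaller point: your constant $K_{\rm sel}=\kappa/2+1/(6\alpha)$ depends on $\alpha$ through the bound $s_n\lesssim p^{1/(6\alpha)}$, whereas the theorem asserts $K_{\rm sel}$ depends only on $\eta_0$. The paper separates the $\eta_0$-dependent part (from Hanson--Wright) into $K_{\rm sel}$ and absorbs the remaining factors $\log s$, $\log\lambda$, $\log n$ directly into the exponent, arguing they are dominated by $(A_4-K_{\rm sel})\log p$; your bookkeeping is in fact more transparent here, but you should flag that the resulting constant picks up the background parameter $\alpha$.
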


Since small coefficients cannot be selected by any method, for selection consistency, we need the so-called \emph{beta-min} condition in the following form:
\be\label{eq:beta-min}
	\min \left\{|\theta_{0,i}|: \theta_{0,i} \neq 0 \right\} > \frac{K_{\rm theta}}{\psi(s_n)} \sqrt{\frac{s_n \log p}{n}}.
\ee
Note that under the beta-min condition \eqref{eq:beta-min}, $\cS_n$ contains no strict subset of $S_0$, so combined with Theorem \ref{thm:selection}, it holds that $\Pi(S_\theta=S_0|\bD_n) \rightarrow 1$ in $P_0^{(n)}$-probability.

\section{Proofs}
\label{sec-proofs}

\begin{lemma} \label{lem:secscore-empirical}
Let $(s_n)$ be a sequence of positive integers,
$\Theta_n = \{\theta\in\bbR^p: s_\theta \leq s_n, \|\theta-\theta_0\|_1 \leq 1\}$,
and $f_{\theta,\bar\theta, \eta} = (\theta-\theta_0)^T \secscore_{\bar\theta,\eta}(\theta-\theta_0)$.
Then, it holds that
\bean
	\E \bigg[ \sup_{\theta,\bar\theta \in\Theta_n} \sup_{\eta\in\cH_0} \frac{1}{\sqrt{n}}
	\Big| \bbG_n f_{\theta,\bar\theta, \eta} \Big| \bigg] \lesssim \delta_n,
\eean
where $\delta_n = \sqrt{s_n\log p / n}$.
\end{lemma}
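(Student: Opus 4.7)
The plan is to exploit the quadratic dependence of $f_{\theta,\bar\theta,\eta}$ on $h = \theta-\theta_0$, which allows the $\theta$-supremum to be eliminated by a single H\"older bound. The remaining process, indexed only by $(\bar\theta,\eta)$, is then controlled by Bernstein's inequality applied to each entry of a random $p\times p$ matrix, combined with a union bound over sparse supports of $\bar\theta$ and bracketing for the Gaussian-mixture class $\cH_0$.

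Concretely, using $\secscore_{\bar\theta,\eta}(x,y) = \secscore_\eta(y - x^T\bar\theta)\,x x^T$, one writes
$$
\bbG_n f_{\theta,\bar\theta,\eta} \;=\; h^T Z_{\bar\theta,\eta}\, h, \qquad
Z_{\bar\theta,\eta} \;=\; \frac{1}{\sqrt{n}}\sum_{i=1}^n \xi_{i,\bar\theta,\eta}\, x_i x_i^T,
$$
with $\xi_{i,\bar\theta,\eta} = \secscore_\eta(Y_i - x_i^T\bar\theta) - \E\secscore_\eta(Y_i - x_i^T\bar\theta)$. Because $\|h\|_1 \leq 1$ on $\Theta_n$,
$$
\sup_{\theta\in\Theta_n}\bigl|\bbG_n f_{\theta,\bar\theta,\eta}\bigr| \;\leq\; \|h\|_1^2 \,\|Z_{\bar\theta,\eta}\|_{\max} \;\leq\; \|Z_{\bar\theta,\eta}\|_{\max},
$$
where $\|Z\|_{\max} = \max_{j,k}|Z_{jk}|$. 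The sparsity of $h$ is not used at this step, and it therefore suffices to prove $\E \sup_{\bar\theta\in\Theta_n,\,\eta\in\cH_0}\|Z_{\bar\theta,\eta}\|_{\max} \lesssim \sqrt{s_n\log p}$, after which dividing by $\sqrt{n}$ yields the stated bound $\delta_n$.

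For each fixed $(\bar\theta,\eta)$, the $(j,k)$-entry $Z_{\bar\theta,\eta,jk}$ is a sum of $n$ independent centered random variables. From the mixture representation \eqref{eq:eta-mixture} one checks that $\secscore_\eta$ grows at most polynomially uniformly over $\cH_0$, and under $\eta_0$ the residuals $\epsilon_i = Y_i - x_i^T\theta_0$ have all exponential moments; combined with $|x_{ij}|\le L$ and $\|\bar\theta-\theta_0\|_1\le 1$ this makes each summand sub-exponential of order one. Bernstein's inequality and a union bound over the $p^2$ entries give $\E\max_{j,k}|Z_{\bar\theta,\eta,jk}| \lesssim \sqrt{\log p}$ for any single $(\bar\theta,\eta)$. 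To add the suprema, I would stratify $\bar\theta$ by its support $S$, $|S|\le s_n$, producing the dominant entropy $\log\binom{p}{s_n}\lesssim s_n\log p$, and on each stratum build an $\epsilon$-net in $\ell_1$ of size $O(\epsilon^{-s_n})$; the Lipschitz bound \eqref{eq:libschitz} for $\secscore_\eta$ transfers the cover into uniform control of $\xi_{i,\bar\theta,\eta}$. The nuisance class $\cH_0$ of symmetrized location-scale Gaussian mixtures with bounded mixing support has only polylogarithmic bracketing entropy in a sup-type metric, contributing a lower-order term. A Bernstein-type chaining bound with envelope of order $1+m_3(\epsilon_i)$ then assembles these ingredients into $\E\sup_{\bar\theta,\eta}\|Z_{\bar\theta,\eta}\|_{\max}\lesssim \sqrt{s_n\log p}$.

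The principal obstacle is uniform control of $\secscore_\eta(Y_i-x_i^T\bar\theta)$ across both the infinite-dimensional class $\cH_0$ and the combinatorially large family of sparse $\bar\theta$: the natural envelope $1+m_3(\epsilon_i)$ is unbounded, so naive $L_2$-chaining with a uniform envelope is inadequate, and one must work in a sub-exponential (Bernstein/Orlicz) norm and exploit the exponential moments of $\epsilon_i$ under $\eta_0$ to keep the entropy integral finite. Once this Orlicz bookkeeping is in place, the combinatorial factor $\binom{p}{s_n}$ is the dominant entropy contribution, the Gaussian-mixture class adds only a polylogarithmic term, and the chaining integral produces the advertised $\delta_n$-rate.
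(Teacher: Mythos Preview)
Your reduction step is sound and is a genuine alternative to the paper's argument: by writing $\bbG_n f_{\theta,\bar\theta,\eta}=h^T Z_{\bar\theta,\eta}h$ and using $|h^T Z h|\le\|h\|_1^2\|Z\|_{\max}$, you dispose of the $\theta$-variable entirely and are left with controlling $\sup_{\bar\theta,\eta}\max_{j,k}|Z_{\bar\theta,\eta,jk}|$. The paper instead keeps all three indices $(\theta,\bar\theta,\eta)$ and brackets the full class $\cF_n=\{f_{\theta,\bar\theta,\eta}\}$ directly; the $\theta$-covering contributes the same $s_n\log p$ to the entropy as your $\bar\theta$-covering does, while your extra $\max_{j,k}$ adds only $2\log p$. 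Either route yields bracketing entropy of order $s_n\log p + s_n\log(1/\epsilon)+\text{polylog}(1/\epsilon)$ and hence the same entropy integral $\asymp\sqrt{s_n\log p}$. Your decomposition is a slight simplification in that it removes one of the two $\Theta_n$-covers.

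Where you overcomplicate is in the last paragraph. The concern that ``naive $L_2$-chaining with a uniform envelope is inadequate'' and that one must pass to Bernstein/Orlicz norms is misplaced. The envelope here is $F_n(x,y)\lesssim \sup_{|\mu|\le L}m_2(y-\mu)$ (order $m_2$, not $m_3$; the bound $|\secscore_\eta|\le m_2$ follows from the Lipschitz line for $\score_\eta$ in \eqref{eq:libschitz}), which has finite second moment under $P_{\eta_0}$ uniformly in $n$. The paper's own bracketing maximal inequality (Corollary~\ref{cor:maximal_bracket}) handles unbounded envelopes with finite $L_2$-norm directly: one gets $\E\sup_{f\in\cF_n}|\bbG_nf|\lesssim\int_0^{\|F_n\|_n}\sqrt{\log N^n_{[]}(\epsilon,\cF_n)}\,d\epsilon$ without any Orlicz machinery. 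The bracketing of $\cH_0$ is done in the paper via the $C^\beta$-smoothness of $\secscore_\eta$ on growing compacta (Theorem~2.7.1 of \cite{van1996weak}) plus a sub-Gaussian tail estimate; your ``polylogarithmic bracketing entropy'' description is correct but the actual argument is simpler than your Orlicz plan suggests. In short: your outline is correct, but plain $L_2$-bracketing with an $L_2$-envelope suffices and is what the paper does.
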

\begin{proof}
Without loss of generality, we may assume that $\theta_0 = 0$.
Let 
$$
	\cF_n = \Big\{f_{\theta,\bar\theta,\eta}: \theta,\bar\theta\in\Theta_n, \eta\in\cH_0 \Big\}.
$$
We first find a bound of bracketing number and envelop function of $\cF_n$, and apply Corollary \ref{cor:maximal_bracket}.
Note that $f_{\theta,\bar\theta,\eta}(x,y) = |x^T\theta|^2 \secscore_\eta(y-x^T\bar\theta)$ and $|x_i^T \theta| \leq \|x_i\|_\infty \|\theta\|_1 \leq L$ for every $\theta\in\Theta_n$.
The map $(x,y) \mapsto F_n(x,y) = L^2 \sup_{|\mu| \leq L} m_2(y-\mu)$ is an envelop function of $\cF_n$ by \eqref{eq:libschitz}, and $\sup_{n\geq 1}\sup_{x\in[-L,L]^p} \E_x F_n^2 \lesssim 1$.
Therefore, $\|F_n\|_n \lesssim 1$, where $\|\cdot\|_n$ is the norm defined in Corollary \ref{cor:maximal_bracket}.

For $(\theta^j, \bar\theta^j, \eta)\in \Theta_n^2\times\cH_0$, $j=1,2$, 
write $f_{\theta^1,\bar\theta^1,\eta_1} - f_{\theta^2,\bar\theta^2,\eta_2} = f_1 + f_2 + f_3$, where
\bean
	f_1 = f_{\theta^1,\bar\theta^1,\eta_1} - f_{\theta^2,\bar\theta^1,\eta_1}, \quad
	f_2 = f_{\theta^2,\bar\theta^1,\eta_1} - f_{\theta^2,\bar\theta^2,\eta_1}, \quad
	f_3 = f_{\theta^2,\bar\theta^2,\eta_1} - f_{\theta^2,\bar\theta^2,\eta_2}.
\eean
Note that $m_k$'s, defined in \eqref{eq:libschitz}, are of polynomial orders, so $\sup_{|\mu| \leq L} m_k(y + \mu) \lesssim m_k(y)$.
Thus, it can be easily shown that
\be \label{eq:f1-f2-bound}
	|f_1(x,y)| \lesssim \|\theta^1 - \theta^2\|_1 m_2(y), \qquad
	|f_2(x,y)| \lesssim \|\bar\theta^1 - \bar\theta^2\|_1 m_3(y).
\ee

To bound $f_3$, consider the class of functions $\cG^K = \{ \secscore_\eta: \eta\in\cH_0 \}$, where $\secscore_\eta \in \cG^K$ is viewed as a map from $[-K, K]$ to $\bbR$.
For a positive integer $\beta$, let
$$
	H^K(\beta) = \sup_{\eta\in\cH_0} \sup_{0\leq k \leq \beta} \sup_{|y| \leq K} \big| \secscore_\eta^{(k)}(y) \big |,
$$
where $\secscore_\eta^{(k)}$ is the $k$th order derivative of the map $y \mapsto \secscore_\eta(y)$.
Then by Theorem 2.7.1 of \cite{van1996weak},
\be \label{eq:uniform-entropy-bound}
	\log N(\delta, \cG^K, \|\cdot\|_\infty) \leq D_\beta (K+1) \bigg\{ \frac{H^K(\beta)}{\delta} \bigg\}^{1/\beta},
\ee
where $D_\beta$ is a constant depending only on $\beta$.
Note that $\int_K^\infty y^4 e^{-y^2} dy \leq K^3 e^{-K^2}$ for every large enough $K$.
Note also that there exist constants $a_j, 1\leq j \leq 3,$ depending only on $\sigma_1, \sigma_2$ and $M$ such that
$$
	\sup_{\eta\in\cH_0} \big| \secscore_\eta(y) \big| \leq a_1 y^2
	\quad \textrm{and} \quad
	\eta_0(y) \leq a_2 e^{-a_3 y^2}
$$
for every large enough $|y|$.
Thus, for a constant $C_1 >0$, we have
\bean
	\int_{\{y:|y| \geq C_1 \sqrt{\log (1/\delta)}\}} \sup_{\eta\in\cH_0} \big| \secscore_\eta(y) \big|^2 dP_{\eta_0}(y) 
	\leq \int_{\{y:|y| \geq C_1 \sqrt{\log (1/\delta)}\}} a_1^2 a_2 y^4 e^{-a_3 y^2} dy
	\\
	\leq \int_{\{y:|y| \geq C_1 \sqrt{a_3 \log (1/\delta)}\}} \frac{a_1^2 a_2}{a_3^{5/2}} y^4 e^{-y^2} dy
	\leq \frac{a_1^2 a_2}{a_3^{5/2}} \left(C_1 \sqrt{a_3 \log (1/\delta)} \right)^3 \delta^{C_1^2 a_3}
\eean
for every small enough $\delta > 0$.
Therefore, we can choose $C_1 > 0$, depending only on $\sigma_1, \sigma_2$ and $M$, such that
\be\label{eq:nmixture-tail}
	\int_{\{y:|y| \geq C_1 \sqrt{\log (1/\delta)}\}} \sup_{\eta\in\cH_0} \big| \secscore_\eta(y) \big|^2 dP_{\eta_0}(y) \leq \delta^2
\ee
for every small enough $\delta > 0$.
By \eqref{eq:uniform-entropy-bound} and \eqref{eq:nmixture-tail}, for every small enough $\delta > 0$ there exists a partition $\{\cH^l : 1 \leq l \leq N(\delta)\} $ of $\cH_0$ into $N(\delta)$ sets such that 
$$
	\log N(\delta) \leq 2 D_\beta K_\delta \bigg(\frac{H^{K_\delta}(\beta)}{\delta} \bigg)^{1/\beta}
$$
and
\be\label{eq:ddot-L2-bound}
	\int \sup_{\eta_1, \eta_2 \in \cH_l} \sup_{|a| \leq L} \Big| \secscore_{\eta_1}(y-a) - \secscore_{\eta_2}(y-a) \Big|^2 dP_{\eta_0}(y) \lesssim \delta^2 \sqrt{\log(1/\delta)}
\ee
for every $l \leq N(\delta)$, where $K_\delta = C_1 \sqrt{\log(1/\delta)}$.
If $\epsilon = \delta^{1-\gamma}$ for small constant $\gamma > 0$, the right hand side of \eqref{eq:ddot-L2-bound} is bounded by $\epsilon^2$.
Combining with \eqref{eq:f1-f2-bound}, we conclude that there exists a constant $C_2 > 0$ depending only on $\sigma_1, \sigma_2$ and $M$, such that
\be\begin{split} \nonumber
	\log N_{[]}^n(C_2 \epsilon, \cF_n) 
	&\leq \log N(\epsilon^{1/(1-\gamma)}) + 2 \log N(\epsilon, \Theta_n, \|\cdot\|_1) 
	\\
	&\lesssim \log N(\epsilon^{1/(1-\gamma)}) + \log\left\{ \binom{p}{s_n} \left(\frac{1}{\epsilon}\right)^{s_n}\right\}
	\\
	&\leq \log N(\epsilon^{1/(1-\gamma)}) + s_n \log p + s_n \log\Big( \frac{1}{\epsilon}\Big),
\end{split}\ee
where $N_{[]}^n$ is the bracket number defined in Section \ref{sec:maximal}.
Thus, 
\be \begin{split} \label{eq:secscor-empirical-bound}
	\E \Big( \sup_{f\in \cF_n} \big| \bbG_n f \big| \Big)
	&\lesssim \int_0^1 \sqrt{\log N(\{\epsilon/C_2\}^{1/(1-\gamma)})} d\epsilon
	+ \sqrt{s_n \log p}
\end{split} \ee
by Corollary \ref{cor:maximal_bracket}.
Since $H^K(1) \lesssim K^3$, we have
\bean
	\log N(\delta) \lesssim \frac{(\log\delta)^2}{\delta} \leq \delta^{-3/2}
\eean
for small enough $\delta > 0$.
Thus, the integral in \eqref{eq:secscor-empirical-bound} is bounded by a constant multiple of
$$
	\int_0^1 \epsilon^{\frac{-3}{4(1-\gamma)}} d\epsilon
$$
which is finite for small $\gamma$.
Thus, \eqref{eq:secscor-empirical-bound} is bounded by a constant multiple of $\sqrt{s_n \log p}$.
This completes the proof.
\end{proof}

\medskip
{\it Proof of Theorem \ref{thm:mLAN}.}
By the Taylor expansion, $L_n(\theta,\eta) - L_n(\theta_0,\eta)$ is equal to
\bean
	\sqrt{n}(\theta-\theta_0)^T \bbG_n \score_{\theta_0,\eta}
	+ n (\theta-\theta_0)^T \int_0^1 \Big\{ (1-t) \bbP_n \secscore_{\theta(t), \eta} \Big\} dt (\theta-\theta_0),
\eean
where $\theta(t) = \theta_0 + t(\theta-\theta_0)$.
The quadratic term of the Taylor expansion can be decomposed as $Q_{n,1}(\theta,\eta) + Q_{n,2}(\theta,\eta) + Q_{n,3}(\theta,\eta)$, where
\bean
	Q_{n,1}(\theta,\eta) &=& n\int_0^1 (1-t) \frac{1}{\sqrt{n}} \bbG_n (\theta-\theta_0)^T \secscore_{\theta(t), \eta} (\theta-\theta_0) dt
	\\
	Q_{n,2}(\theta,\eta) &=& \int_0^1 (1-t) \sum_{i=1}^n \Big[ (\theta-\theta_0)^T \Big\{ \E \secscore_{\theta(t), \eta}(x_i, Y_i) - \E \secscore_{\theta_0, \eta}(x_i, Y_i) \Big\} (\theta-\theta_0) \Big] dt
	\\
	Q_{n,3}(\theta,\eta) &=& \frac{1}{2} \sum_{i=1}^n (\theta-\theta_0)^T \E \secscore_{\theta_0, \eta}(x_i, Y_i) (\theta-\theta_0).
\eean
Since
$$
	\frac{1}{\sqrt{n}} \bbG_n (\theta-\theta_0)^T \secscore_{\theta(t), \eta} (\theta-\theta_0) = \frac{\|\theta-\theta_0\|_1^2}{\sqrt{n}} \bbG_n \frac{(\theta-\theta_0)^T}{\|\theta-\theta_0\|_1} \secscore_{\theta(t), \eta} \frac{(\theta-\theta_0)}{\|\theta-\theta_0\|_1},
$$
it holds that
$$
	\E \bigg( \sup_{\theta\in\Theta_n} \sup_{\eta\in\cH_0} \big|Q_{n,1}(\theta,\eta)\big| \bigg)
	\lesssim n \epsilon_n^2 \delta_n
$$
by Lemma \ref{lem:secscore-empirical}, where $\delta_n = \sqrt{s_n\log p / n}$.
Each summand in the definition of $Q_{n,2}$ is equal to
$$
	|x_i^T (\theta-\theta_0)|^2 \E \Big\{\secscore_\eta (Y_i - x_i^T \theta(t)) - \secscore_\eta(Y_i - x_i^T \theta_0)\Big\} 
	\lesssim |x_i^T (\theta-\theta_0)|^2 \|\theta-\theta_0\|_1,
$$
so
$$
	\bigg( \sup_{\theta\in\Theta_n} \sup_{\eta\in\cH_0} \big|Q_{n,2}(\theta,\eta)\big| \bigg)
	\lesssim \epsilon_n \sup_{\theta\in\Theta_n} \|X(\theta-\theta_0)\|_2^2.
$$
Since
$$
	Q_{n,3}(\theta,\eta) = -\frac{v_\eta}{2} \|X(\theta-\theta_0)\|_2^2,
$$
the proof is complete.
\qed

\begin{lemma}\label{lem:score-empirical}
It holds that
\be \begin{split}\label{eq:score-empirical-general}
	\E \bigg[ \sup_{\eta\in\cH_0} \Big\| \bbG_n \score_{\theta_0,\eta}\Big\|_\infty \bigg] &\lesssim \sqrt{\log p}.
\end{split}\ee
\end{lemma}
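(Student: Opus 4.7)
The plan is to apply the bracketing maximal inequality (Corollary \ref{cor:maximal_bracket}) to the class
$$
\cF_n = \big\{g_{j,\eta}(x,y) = \score_\eta(y - x^T\theta_0)\, x_j : 1 \leq j \leq p,\; \eta\in\cH_0\big\},
$$
since $\sup_{\eta\in\cH_0}\|\bbG_n\score_{\theta_0,\eta}\|_\infty = \sup_{f\in\cF_n}|\bbG_n f|$. The main work is to show that $\log N_{[]}^n(\epsilon,\cF_n) \lesssim \log p + \{\log(1/\epsilon)\}^{c}$ for some constant $c$ (any $c$ strictly less than $1$ after the $(1/\delta)^{1/\beta}$ argument is used with large $\beta$, just as in the proof of Lemma \ref{lem:secscore-empirical}).

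First I construct an envelope. For $\eta\in\cH_0$ a Gaussian mixture with mixing support in $[-M,M]\times[\sigma_1,\sigma_2]$, differentiating \eqref{eq:eta-mixture} shows that $|\score_\eta(y)| \lesssim 1+|y|$ uniformly in $\eta$. Combined with $|x_{ij}|\leq L$, this gives the envelope $F_n(x,y) = C(1+|y-x^T\theta_0|)$, and hence $\sup_n \sup_{i} \E_{x_i} F_n^2 \lesssim 1$, so $\|F_n\|_n \lesssim 1$ in the notation of Corollary \ref{cor:maximal_bracket}. Next, by the same sup-entropy argument as in \eqref{eq:uniform-entropy-bound}--\eqref{eq:ddot-L2-bound} (now applied to $\score_\eta$ instead of $\secscore_\eta$, which is admissible because arbitrary-order derivatives of a Gaussian mixture with uniformly bounded parameters are themselves uniformly bounded on any compact set by polynomial-in-$K$ constants), I obtain for every small $\epsilon > 0$ a partition $\{\cH^l : l\leq N(\epsilon)\}$ of $\cH_0$ with $\log N(\epsilon) \lesssim \{\log(1/\epsilon)\}^{1/2}(1/\epsilon)^{1/\beta}$ and $\int \sup_{\eta_1,\eta_2\in\cH^l}|\score_{\eta_1}(y)-\score_{\eta_2}(y)|^2 dP_{\eta_0}(y) \lesssim \epsilon^2$.

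To pass from a bracketing of $\cH_0$ to one of $\cF_n$, observe that the index $j\in\{1,\ldots,p\}$ is discrete, so for each $j$ separately the bracketing entropy at level $\epsilon$ of $\{g_{j,\eta}:\eta\in\cH_0\}$ (with respect to the relevant $L_2(P_0)$-style norm) is bounded by $\log N(\epsilon/L)$, using that $|x_j|\leq L$ transfers Hellinger-type control on $\score_\eta$ to control on $\score_\eta\cdot x_j$. Taking the union over $j$ yields
$$
\log N_{[]}^n(C\epsilon,\cF_n) \lesssim \log p + \log N(\epsilon).
$$
Plugging this into Corollary \ref{cor:maximal_bracket} gives
$$
\E\sup_{f\in\cF_n}|\bbG_n f| \lesssim \int_0^1 \sqrt{\log p + \log N(\epsilon)}\, d\epsilon \lesssim \sqrt{\log p} + \int_0^1 \sqrt{\log N(\epsilon)}\, d\epsilon,
$$
and choosing $\beta$ large enough makes the last integral a finite constant, leaving the bound $\sqrt{\log p}$ as claimed.

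The main obstacle is the entropy control for $\{\score_\eta:\eta\in\cH_0\}$ under a tail-aware pseudo-metric: one must combine a sup-norm entropy bound on a large compact $[-K,K]$ (where $K = C\sqrt{\log(1/\epsilon)}$ grows mildly with $\epsilon$) together with a Gaussian-tail bound ensuring $\int_{|y|\geq K}\sup_\eta|\score_\eta(y)|^2 dP_{\eta_0}(y) \leq \epsilon^2$, exactly paralleling \eqref{eq:nmixture-tail} but with $\score_\eta$ in place of $\secscore_\eta$. The other potentially delicate point is the union bound over $j$: this is essentially free because it only adds $\log p$ additively to the entropy, and the resulting $\sqrt{\log p}$ factor is precisely what survives after the entropy integral.
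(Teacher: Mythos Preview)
Your proposal is correct and follows essentially the same approach as the paper: define the class $\cF_n = \{e_j^T\score_{\theta_0,\eta}: j\leq p,\ \eta\in\cH_0\}$, exhibit an envelope with $\|F_n\|_n\lesssim 1$, bound the bracketing entropy by $\log p$ plus an entropy term over $\{\score_\eta:\eta\in\cH_0\}$, and apply Corollary~\ref{cor:maximal_bracket}. The only difference is in how the $\eta$-entropy is controlled: the paper invokes Corollary~2.7.4 of \cite{van1996weak} (bracketing for convolution classes) to get $\log N_{[]}(\epsilon,\{\score_\eta\},L_2(P_{\eta_0})) \lesssim \epsilon^{-1}$ directly, whereas you recycle the sup-norm-on-compacts plus Gaussian-tail argument from Lemma~\ref{lem:secscore-empirical}; both give an integrable entropy integral and hence the same $\sqrt{\log p}$ bound.
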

\begin{proof}
Without loss of generality, we may assume that $\theta_0=0$.
Consider the class of real valued functions
$$
	\cF_n = \Big\{e_j^T \score_{\theta_0, \eta}: 1 \leq j \leq p, \eta\in\cH_0\Big\},
$$
where $e_j$ is the $j$th unit vector in $\bbR^p$.
Then, it is obvious that
$$
	\sup_{\eta\in\cH_0} \Big\| \bbG_n \score_{\theta_0,\eta}\Big\|_\infty = \sup_{f \in\cF_n} |\bbG_n f|.
$$
We apply Corollary \ref{cor:maximal_bracket} to bound the right hand side.
Note that $|f(x,y)| \lesssim m_1(y)$ for every $f \in \cF_n$, so there exists an envelop $F_n$ of $\cF_n$ such that $\|F_n\|_n \lesssim 1$, where $\|\cdot\|_n$ is the norm defined in Corollary \ref{cor:maximal_bracket}.
Let $\cG = \{\score_\eta: \eta\in\cH_0\}$.
Then, by applying Corollary 2.7.4 of \cite{van1996weak} with $\alpha=d=1$ and $r=2$, we have that $\log N_{[]} (\epsilon, \cG, L_2(P_{\eta_0})) \lesssim \epsilon^{-1}$.
This implies that $\log N_{[]}^n(\cF_n, \epsilon) \lesssim \epsilon^{-1} + \log p$.
Thus, the proof is complete by Corollary \ref{cor:maximal_bracket}.
\end{proof}

\begin{lemma}\label{lem:denom_bd}
Assume that \eqref{eq:lambda_condition} holds and $s_0 \log p = o(n)$.
Then, there exists a positive constant $D$, depending only on $\sigma_1, \sigma_2$ and $M$, such that the $P_0^{(n)}$-probabilities of the event
\be\label{eq:denom_tot_bd}
	\bigg\{ \int_{\Theta\times\cH} R_n(\theta,\eta) d\Pi(\theta,\eta) 
	\geq  \exp\Big[D \big\{\log \pi_p(s_0) -s_0 \log p -\lambda\|\theta_0\|_1 - (\log n)^3 \big\} \Big] \bigg\}
\ee
converge to 1.
\end{lemma}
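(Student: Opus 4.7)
The plan is to lower bound $\int R_n\, d\Pi$ by restricting the integral to a carefully chosen neighbourhood $A = A_\theta \times A_\eta$ of $(\theta_0,\eta_0)$ on which the misspecified LAN from Theorem \ref{thm:mLAN} forces $R_n$ not to be too small, and then to combine this with explicit prior mass estimates on $A$. Concretely, set $\epsilon_n = 1/\sqrt{n\log p}$, let $A_\theta = \{\theta:S_\theta=S_0,\ \|\theta-\theta_0\|_1\leq\epsilon_n\}$, and let $A_\eta$ be the usual KL neighbourhood
$A_\eta = \{\eta\in\cH_0 : K(\eta_0,\eta)\leq\delta_n^2,\ \int\eta_0\log^2(\eta_0/\eta)\leq\delta_n^2\}$
with $n\delta_n^2 \asymp (\log n)^3$. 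Split
\[
\log R_n(\theta,\eta) = \bigl[L_n(\theta,\eta)-L_n(\theta_0,\eta)\bigr] + \bigl[L_n(\theta_0,\eta)-L_n(\theta_0,\eta_0)\bigr].
\]

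For the first bracket I would apply Theorem \ref{thm:mLAN} with $s_n=s_0$. At scale $\epsilon_n$ one has $\|\bX(\theta-\theta_0)\|_2^2 \leq nL^2\epsilon_n^2 = L^2/\log p$ on $A_\theta$, so the quadratic term $\tfrac{v_\eta}{2}\|\bX(\theta-\theta_0)\|_2^2$ is $O(1)$, and the LAN remainder bound of Theorem \ref{thm:mLAN}, namely $n\epsilon_n^2\sqrt{s_0\log p/n} + \epsilon_n \sup_{A_\theta}\|\bX(\theta-\theta_0)\|_2^2$, is $o(1)$ under $s_0\log p = o(n)$. The linear term obeys $|\sqrt{n}(\theta-\theta_0)^T \bbG_n\score_{\theta_0,\eta}| \leq \sqrt{n}\epsilon_n \sup_{\eta\in\cH_0}\|\bbG_n\score_{\theta_0,\eta}\|_\infty = O_P(1)$ by Lemma \ref{lem:score-empirical}, uniformly in $\eta$. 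Consequently, on an event of probability tending to one, $L_n(\theta,\eta) - L_n(\theta_0,\eta) \geq -C_1$ uniformly over $A_\theta\times\cH_0$. The second bracket equals $\sum_{i=1}^n \log[\eta(\epsilon_i)/\eta_0(\epsilon_i)]$ with $\epsilon_i$ \iid\ $\eta_0$, so the classical variance-based evidence lower bound (see e.g.\ Lemma 8.1 of \cite{ghosal2007convergence}), applied to the restricted prior $\Pi_\cH(\,\cdot\,\cap A_\eta)/\Pi_\cH(A_\eta)$, yields
\[
\int_{A_\eta} \prod_{i=1}^n \frac{\eta(\epsilon_i)}{\eta_0(\epsilon_i)}\, d\Pi_\cH(\eta) \geq \Pi_\cH(A_\eta)\, e^{-C_2 n\delta_n^2}
\]
on an event of probability $1-O(1/(n\delta_n^2))$. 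Combining the two brackets by Fubini gives $\int R_n\,d\Pi \geq e^{-C_1-C_2 n\delta_n^2}\,\Pi_\Theta(A_\theta)\,\Pi_\cH(A_\eta)$.

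The next step is estimating the two prior masses. Standard concentration results for symmetrised DP mixtures of Gaussians (see \cite{ghosal2001entropies, ghosal2007convergence, chae2016semiparametric}) give $\log \Pi_\cH(A_\eta) \gtrsim -(\log n)^3$. For the coefficient prior,
$\Pi_\Theta(A_\theta) \geq \tfrac{\pi_p(s_0)}{\binom{p}{s_0}} \cdot \tfrac{(2\epsilon_n)^{s_0}}{s_0!} \cdot (\lambda/2)^{s_0} e^{-\lambda(\|\theta_0\|_1+\epsilon_n)}$,
where the first factor is the probability of selecting the correct model and support set, the second is the volume of the $\ell_1$-ball of radius $\epsilon_n$, and the third is the minimum of the Laplace product density over $A_\theta$. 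Condition \eqref{eq:lambda_condition} guarantees $\lambda\epsilon_n \leq 1$ and $\log(\lambda\epsilon_n) \geq -\log p - \tfrac{1}{2}\log\log p$, while both $\log s_0!$ and $\log\binom{p}{s_0}$ are at most $s_0\log p$. Taken together, $\log\Pi_\Theta(A_\theta)\geq \log\pi_p(s_0) - D_1 s_0\log p - \lambda\|\theta_0\|_1$ for a constant $D_1$ depending only on the allowed parameters. Plugging into the Fubini bound and absorbing $C_1$, $C_2$, and $n\delta_n^2 = O((\log n)^3)$ into a single constant $D$ yields the conclusion.

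The main obstacle is the uniform LAN control in the second paragraph: the scale $\epsilon_n = 1/\sqrt{n\log p}$ is tightly determined — it is essentially the smallest $\epsilon_n$ at which the linear term $\sqrt{n}\epsilon_n\sqrt{\log p}$, uniform in $\eta\in\cH_0$ by Lemma \ref{lem:score-empirical}, stays $O_P(1)$, and the largest at which the quadratic and remainder of Theorem \ref{thm:mLAN} are controlled uniformly. Getting this balance right, together with $\lambda\epsilon_n \leq 1$ so that the Laplace volume factor contributes only $s_0\log p$ rather than something worse, is what converts the LAN expansion into a lower bound with the exact $s_0\log p$ and $\lambda\|\theta_0\|_1$ losses appearing in the statement.
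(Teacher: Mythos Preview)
Your proof is correct and follows essentially the same route as the paper: decompose $\log R_n(\theta,\eta)$ into the $\theta$-part handled by Theorem~\ref{thm:mLAN} plus Lemma~\ref{lem:score-empirical} and the $\eta$-part handled by the standard KL evidence lower bound, then multiply by explicit prior mass estimates on the Laplace and DP-mixture priors. The only noteworthy difference is your choice of scale $\epsilon_n=1/\sqrt{n\log p}$ for the $\theta$-neighbourhood, versus the paper's $1/\sqrt{n}$: your smaller radius makes the linear LAN term $\sqrt{n}\epsilon_n\cdot O_P(\sqrt{\log p})=O_P(1)$ directly, whereas the paper accepts an $O_P(\sqrt{\log p})$ linear term and absorbs the resulting $-M_n\sqrt{\log p}$ loss (with $M_n=\sqrt{\log p}$) into the $s_0\log p$ budget; both bookkeeping choices lead to the same final bound.
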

\begin{proof}
Let $\epsilon_n = n^{-1/2} (\log n)^{3/2}$ and
$$
	\cH_n = \bigg\{\eta\in\cH_0: -P_{\eta_0} \bigg(\log \frac{\eta}{\eta_0}\bigg)
	 \leq \epsilon_n^2, ~  P_{\eta_0} \bigg(\log \frac{\eta}{\eta_0}\bigg)^2 \leq \epsilon_n^2 \bigg\},
$$
then
\be\begin{split} \label{eq:denom_tot}
	\int_{\Theta\times\cH} R_n(\theta,\eta) d\Pi(\theta,\eta)
	&\geq \int_{\Theta\times\cH_n} R_n(\theta_0,\eta) \frac{R_n(\theta,\eta)}{R_n(\theta_0,\eta)}  d\Pi(\theta,\eta)
	\\
	&\geq \int_{\Theta\times\cH_n} R_n(\theta_0,\eta) \inf_{\eta\in\cH_n}\bigg(\frac{R_n(\theta,\eta)}{R_n(\theta_0,\eta)}\bigg)  d\Pi(\theta,\eta)
	\\
	&\geq \int_{\cH_n} R_n(\theta_0,\eta) d\Pi_\cH(\eta) \times
	\int_\Theta \inf_{\eta\in\cH_0}\bigg(\frac{R_n(\theta,\eta)}{R_n(\theta_0,\eta)}\bigg) d\Pi_\Theta(\theta).
\end{split}\ee
It is shown in \cite{ghosal2001entropies} (see the proof of Theorem 6.2) that
$\log \Pi_\cH (\cH_n) \gtrsim -n\epsilon_n^2$.
By Lemma 8.1 of \cite{ghosal2000convergence},
\be\label{eq:denom_eta}
	P_0^{(n)} \bigg( \bigg\{\int_{\cH_n} R_n(\theta_0,\eta) d\Pi_\cH(\eta) \geq e^{-Cn\epsilon_n^2} \Pi_\cH(\cH_n)\bigg\} \bigg) \rightarrow 1
\ee
for any $C > 1$.

Let $\Theta_n = \{\theta\in\Theta: \sqrt{n} \|\theta-\theta_0\|_1 \leq 1, \; S_\theta = S_0\}$, then
\be\begin{split}\label{eq:denom_bd}
	\int_\Theta \inf_{\eta\in\cH_0}\bigg(\frac{R_n(\theta,\eta)}{R_n(\theta_0,\eta)}\bigg) d\Pi_\Theta(\theta)
	\geq \int_{\Theta_n} \exp\bigg( \inf_{\eta\in\cH_0} \Big\{L_n(\theta,\eta) - L_n(\theta_0,\eta)\Big\} \bigg) d\Pi_\Theta(\theta)
	\\
	\geq \frac{\pi_p(s_0)}{\binom{p}{s_0}} \int_{\Theta_n} \exp\bigg( \inf_{\eta\in\cH_0} \Big\{L_n(\theta,\eta) - L_n(\theta_0,\eta)\Big\} \bigg)
	g_{S_0}(\theta_{S_0}) \;d\theta_{S_0}.
\end{split}\ee
By Theorem \ref{thm:mLAN}, the last exponent of \eqref{eq:denom_bd} is bounded below by
$$
	\inf_{\eta\in\cH_0} \bigg\{ \sqrt{n}(\theta-\theta_0)^T \bbG_n \score_{\theta_0,\eta}
	- \frac{n}{2}(\theta-\theta_0)^T V_{n,\eta}(\theta-\theta_0)\bigg\}
	+ o_{P_0}(1),
$$
where the $o_{P_0}(1)$ term does not depend on $\theta$ and $\eta$.
Note that
$$
	\E \bigg[\sup_{\theta\in\Theta_n} \sup_{\eta\in\cH_0} \bigg| \sqrt{n} (\theta-\theta_0)^T \bbG_n \score_{\theta_0,\eta}\bigg| \bigg] = O(\sqrt{\log p})
$$
by Lemma \ref{lem:score-empirical}, and
$$
	\sup_{\theta\in\Theta_n} \sup_{\eta\in\cH_0} n (\theta-\theta_0)^T V_{n,\eta_0}(\theta-\theta_0) = O(1).
$$
Thus, for every real sequence $M_n \rightarrow \infty$, \eqref{eq:denom_bd} is bounded below by
\be\begin{split} \label{eq:laplace_trans}
	&\frac{\pi_p(s_0)}{\binom{p}{s_0}} e^{-M_n \sqrt{\log p}} \int_{\Theta_n} g_{S_0}(\theta_{S_0}) d\theta_{S_0}
	\\
	&\geq \frac{\pi_p(s_0)}{\binom{p}{s_0}} e^{-M_n \sqrt{\log p} - \lambda \|\theta_0\|_1} \int_{\Theta_n} g_{S_0}(\theta_{S_0} - \theta_{0,S_0}) d\theta_{S_0}
\end{split}\ee
with $P_0^{(n)}$-probability tending to 1.
The last integral of \eqref{eq:laplace_trans} is equal to
\bean
	&&\int_{\Theta_n} \bigg(\frac{\lambda}{2}\bigg)^{s_0} e^{-\lambda \|\theta_{S_0} -\theta_{0,S_0}\|_1} d\theta_{S_0}
	\geq \bigg(\frac{\lambda}{2}\bigg)^{s_0} e^{-\frac{\lambda}{\sqrt{n}}} \int_{\Theta_n}  \;d\theta
	\\
	&&\geq \bigg(\frac{\lambda}{2}\bigg)^{s_0} e^{-\frac{\lambda}{\sqrt{n}}} \int_{\{\theta_{S_0}\in\bbR^{S_0}:\|\theta_{S_0}-\theta_{0,S_0}\|_2 \leq 1/\sqrt{s_0 n}\}}  \;d\theta_{S_0}
	\\
	&&= \bigg(\frac{\lambda}{2}\bigg)^{s_0} e^{-\frac{\lambda}{\sqrt{n}}} \frac{\pi^{s_0/2}}{\Gamma(s_0/2+1)} (ns_0)^{-s_0/2}
	= \bigg(\frac{\lambda}{\sqrt{n}}\bigg)^{s_0}  \bigg(\frac{\sqrt{\pi}}{2\sqrt{s_0}}\bigg)^{s_0} \frac{e^{-\frac{\lambda}{\sqrt{n}}}}{\Gamma(s_0/2+1)}.
\eean
Since $\sqrt{n}/p \leq \lambda \leq \sqrt{n\log p}$, and $\binom{p}{s_0} \leq p^{s_0} / \Gamma(s_0+1)$, \eqref{eq:laplace_trans} is bounded below by
\bean
	\frac{\pi_p(s_0)}{p^{2s_0}} \frac{\Gamma(s_0+1)}{\Gamma(s_0/2+1)}
	\bigg(\frac{\sqrt{\pi}}{2\sqrt{s_0}}\bigg)^{s_0} e^{-\lambda\|\theta_0\|_1 - (M_n+1) \sqrt{\log p}}.
\eean
Since 
\bean
	\frac{\Gamma(s_0+1)}{\Gamma(s_0/2+1)} \bigg(\frac{\sqrt{\pi}}{2}\bigg)^{s_0} \gtrsim 1,
\eean
the last display is bounded below by a constant multiple of
\bean
	\frac{\pi_p(s_0)}{p^{2s_0}} e^{-\lambda\|\theta_0\|_1 - (M_n+1) \sqrt{\log p} - (s_0 \log s_0) /2}.
\eean
Combining with \eqref{eq:denom_tot} and \eqref{eq:denom_eta}, the proof is complete by letting $M_n = \sqrt{\log p}$.
\end{proof}

\medskip
{\it Proof of Theorem \ref{thm:dimension}.}
For $R > s_0$ and $B = \{(\theta,\eta): |S_\theta| \geq R\}$,
\bean
	\Pi(B) = \sum_{s=R}^p \pi_p(s) \leq \sum_{s=R}^p \pi_p (s_0)
	\bigg(\frac{A_2}{p^{A_4}}\bigg)^{s-s_0}
	\leq \pi_p (s_0)  \bigg(\frac{A_2}{p^{A_4}}\bigg)^{R-s_0} 
	\sum_{j=0}^\infty \bigg(\frac{A_2}{p^{A_4}}\bigg)^j.
\eean
Under the condition \eqref{eq:pi_p_condition}, every constant $C_1 > A_3$, $-\log \pi_p(s_0) \leq C_1 s_0 \log p$ for large enough $n$.
Let $E_n$ be the event \eqref{eq:denom_tot_bd}.
Since $\lambda \|\theta_0\|_1 = O(s_0 \log p)$ and $\log \pi_p(s_0) \gtrsim s_0\log p$, we have, by Lemma \ref{lem:denom_bd},
\be\begin{split}\label{eq:dimen_up_bd}
	&\E \Pi(B | \bD_n) 1_{E_n} 
	\\
	&\leq \exp\Big[C_2 \big\{s_0 \log p +\lambda\|\theta_0\|_1 + (\log n)^3 - \log \pi_p(s_0) \big\} \Big]
	\E \int_B R_n(\theta,\eta) d\Pi(\theta,\eta)
	\\	
	&\leq \Pi(B) \exp\Big[C_3 \big\{s_0 \log p + (\log n)^3 \big\}  \Big]
\end{split}\ee
for some constants $C_2$ and $C_3$.
If $(\log n)^3 \leq s_0 \log p$, then, for $R = K_1 s_0$ with sufficiently large constant $K_1 >0$, the right hand side of \eqref{eq:dimen_up_bd} converges to 0.
Otherwise, for $R = K_2 (\log n)^2$ with sufficiently large constant $K_2 >0$, the right hand side of \eqref{eq:dimen_up_bd} converges to 0.
Since $\E \Pi(B | \bD_n) = \E \Pi(B | \bD_n)1_{E_n} + P_0^{(n)}(E_n^c)$ and $P_0^{(n)}(E_n^c) = o(1)$, the proof is complete.
\qed

\medskip
{\it Proof of Theorem \ref{thm:d_n-consistency}.}
We first prove that there exists a constant $C_1 > 0$, depending only on $\sigma_1, \sigma_2$ and $M$, such that
$\E \Pi(\theta \notin \Theta_n | \bD_n ) = o(1)$, where 
\be\label{eq:Theta_n-def-hel}
	\Theta_n = \Big\{\theta: s_\theta \leq s_n/2, \; \|\theta-\theta_0\|_1 \leq \frac{C_1 \sqrt{s_n}}{\phi(s_n)} \Big\}.
\ee
Note first that $\E \Pi(s_\theta > s_n/2 | \bD_n) = o(1)$ by Theorem \ref{thm:dimension}.
Since every $\eta \in \cH_0$ has a sub-Gaussian tail, there exist positive constants $a$ and $b$, depending only on $\sigma_1, \sigma_2$ and $M$, such that
$$
	\sup_{\eta\in\cH_0} P_\eta \Big( e^{\epsilon_i^2 /a} -1 - \frac{\epsilon_i^2}{a} \Big) a^2 \leq \frac{b}{2},
$$
where $\epsilon_i$'s are \iid\ following $P_\eta$.
By Lemma \ref{lem:bernstein-inequality},
\bean
	\sup_{\eta\in\cH_0} P_\eta\Big(\Big| \frac{1}{n} \sum_{i=1}^n \epsilon_i^2 - \sigma_\eta^2 \Big| > t \Big) \leq 2e^{\frac{-nt^2}{2(b+at)}},
\eean
where $\sigma_\eta^2$ is the variance of $P_\eta$.
Define a sequence of tests $(\phi_n)$ as $\phi_n = 1_{\{\|\bY- \bX \theta_0\|_2^2/n - \sigma_{\eta_0}^2 > t\}}$.
Then,
$$
	P_0 \phi_n \leq 2e^{\frac{-nt^2}{2(b+at)}}.
$$
Also, for $\theta$ with $s_\theta \leq s_n/2$,
\bean
	P_{\theta,\eta} (1-\phi_n) &=& P_{\theta,\eta} \bigg( \frac{\|\bY - \bX\theta_0\|_2^2}{n} - \sigma_{\eta_0}^2 \leq t\bigg)
	\\
	&\leq& P_{\theta,\eta} \bigg( \frac{\|\bX(\theta-\theta_0)\|_2^2}{2n} - \frac{\|\bY - \bX\theta\|_2^2}{n} - \sigma_{\eta_0}^2 \leq t\bigg)
	\\
	&\leq& P_{\theta,\eta} \bigg( \phi^2(s_n) \frac{\|\theta-\theta_0\|_1^2}{2 s_n} \leq \frac{\|\bY - \bX\theta\|_2^2}{n} + \sigma_{\eta_0}^2 + t\bigg).
\eean
By taking $t=1$, this implies that there exist constants $C_1, C_2 > 0$, depending only on $\sigma_1, \sigma_2$ and $M$, such that
$$
	\sup_{\phi(s_n) \|\theta-\theta_0\|_1 \geq C_1\sqrt{s_n}} \sup_{\eta\in\cH_0} P_{\theta,\eta} (1-\phi_n) \leq 2 e^{- C_2 n}.
$$
Define $\Theta_n$ as \eqref{eq:Theta_n-def-hel}, and let $E_n$ be the event \eqref{eq:denom_tot_bd}.
Then by Lemma \ref{lem:denom_bd},
\be\begin{split} \label{eq:consistency-technique}
	&\E \Pi(\theta\notin\Theta_n | \bD_n) = \E \Pi(\theta\notin\Theta_n | \bD_n) 1_{E_n} (1- \phi_n) + o(1)
	\\
	&\leq \sup_{\theta\notin\Theta_n} \sup_{\eta\in\cH_0} P_{\theta,\eta}(1-\phi_n) \exp \Big[ C_3 \big\{ s_0\log p + (\log n)^3 \big\} \Big] + o(1),
\end{split}\ee
where $C_3 > 0$ is a constant.
Since $s_0 \log p + (\log n)^3 = o(n)$, the last display is of order $o(1)$.

Next, it is easy to see that
$$
	d_n((\theta^1, \eta_1), (\theta^2, \eta_2)) \lesssim \|\theta^1-\theta^2\|_1 + d_H(\eta_1, \eta_2).
$$
Since $\log N(\epsilon, \cH_0, d_H) \lesssim \{\log(1/\epsilon)\}^3$ (see Theorem 3.3 of \cite{ghosal2001entropies}), we have that $\log N( \epsilon/36, \Theta_n \times \cH_0, d_n) \lesssim \log N(\epsilon)$, where
\bean
	\log N(\epsilon) &=&  s_n \bigg\{ \log p + \log\Big(\frac{1}{\epsilon}\Big) + \log \Big( \frac{1}{\phi(s_n)}\Big)\bigg\}
	+ \bigg\{ \log\Big( \frac{1}{\epsilon} \Big)\bigg\}^3
	\\
	&\lesssim& s_n \log p + s_n \log\Big(\frac{1}{\epsilon}\Big)
	+ \bigg\{ \log\Big( \frac{1}{\epsilon} \Big)\bigg\}^3.
\eean
By Lemmas 2 and 9 of \cite{ghosal2007convergence}, for every $\epsilon > 0$ with $e^{-n\epsilon^2/2} \leq 1/2$, there exist tests $\varphi_n$ such that for some constant $C_4 > 0$
$$
	P_0^{(n)} \varphi_n \leq 2\exp\Big[ C_4 \log N(\epsilon) - \half n\epsilon^2 \Big]
$$
and $P_{\theta,\eta}^{(n)}(1-\varphi_n) \leq e^{-n\epsilon^2/2}$ for all $(\theta, \eta)\in\Theta_n\times \cH_0$ such that $d_n((\theta, \eta), (\theta_0, \eta_0)) > \epsilon$.
Let $\epsilon_n = C_5\sqrt{s_n \log p / n}$ for large enough constant $C_5$, then
similarly to \eqref{eq:consistency-technique}, we have that
$$
	\E \Pi\Big(\theta \in \Theta_n: d_n((\theta, \eta), (\theta_0, \eta_0)) > \epsilon_n \;\Big|\; \bD_n \Big) = o(1),
$$
so the proof is complete.
\qed

\medskip
{\it Proof of Corollary \ref{cor:eta-consistency}.}
Assume that there exist constants $D>0$ and $\delta > 0$ such that
\be \label{eq:inf-eta}
	\inf_{y\in\bbR} d_H(\eta_0, T_y(\eta)) \geq D(d_H(\eta_0,\eta) \wedge \delta)
\ee
for every $\eta\in\cH_0$, where $(T_y(\eta))(x) = \eta(x+y)$.
Then, $d_n((\theta, \eta), (\theta_0, \eta_0)) \leq \epsilon_n$ for some $\epsilon_n = o(1)$ implies that $d_H(\eta,\eta_0) < \epsilon_n / D$.
Therefore, by Theorem \ref{thm:d_n-consistency}, it suffices to prove \eqref{eq:inf-eta}.

Note that 
\bean
	h^2(\eta, \eta(\cdot+y))
	&=& \int \Big(\sqrt{\eta(x+y)} - \sqrt{\eta(x)}\Big)^2 dx
	= y_i^2 \int \bigg(\int_0^1 \frac{\eta^\prime(x+ty)}
	{\sqrt{\eta(x+ty_i)}}dt\bigg)^2 dx 
	\\
	&\leq& y^2 \int \int_0^1 \bigg(\frac{\eta^\prime(x+ty)}
	{\eta(x+ty)}\bigg)^2 \eta(x+ty) \;dt\,dx 
	\leq C^2 y^2
\eean
for some $C > 0$ and every $y\in\bbR$, where $\eta^\prime$ is the derivative of $\eta$.
If $|y| \leq d_H(\eta_0,\eta) / (2C)$, then
\bean
	d_H(\eta_0, T_y(\eta)) &\geq& d_H(\eta_0,\eta) - d_H(\eta,T_y(\eta))
	\\
	&\geq& d_H(\eta_0,\eta) - C|y| \geq \half d_H(\eta_0,\eta).
\eean
If $|y| > d_H(\eta_0,\eta) / (2C)$
\bean
	d_H(\eta_0, T_y(\eta)) \geq d_V(\eta_0, T_y(\eta)) \geq 2 \int_0^{|y|} \eta_0(x) dx \gtrsim |y|\wedge \delta
\eean
for some $\delta > 0$, where the last inequality holds by continuity and positivity of $\eta_0$ at the origin.
\qed

\medskip
{\it Proof of Corollary \ref{cor:theta-rate}.}
Let $\epsilon_n = K_{\rm Hel} \sqrt{s_n \log p/n}$ and 
$$
	\Theta_n = \Big\{\theta\in\Theta: s_\theta \leq s_n/2,\;
	d_n ((\theta, \eta), (\theta_0, \eta_0)) \leq \epsilon_n \Big\}.
$$
Then, by Theorems \ref{thm:dimension} and \ref{thm:d_n-consistency}, $\bbE \Pi(\theta\in\Theta_n | \bD_n) \rightarrow 1$.
Note that there exist positive constants $C_1$ and $\delta$ depending only on $\sigma_1, \sigma_2$ and $M$ (see \cite{chae2016semiparametric}) such that
$$
	d_H^2 (p_{\theta, \eta, i}, p_{\theta_0, \eta_0, i}) \geq C_1^2 \Big( |x_i^T (\theta-\theta_0)| \wedge \delta \Big)^2
$$
for every $\theta \in \Theta$.
For $\theta\in\Theta_n$, and let $\bbN_{\delta, n} = \{ i\leq n: |x_i^T (\theta - \theta_0)| \geq \delta\}$ and $N_{\delta, n}$ be the cardinality of $\bbN_{\delta,n}$.
Then,
\be\begin{split} \label{eq:tech-ineq}
	\epsilon_n^2 &\geq d_n^2 ((\theta, \eta), (\theta_0, \eta_0)) 
	\geq \frac{C_1^2}{n} \sum_{i=1}^n \Big( |x_i^T (\theta-\theta_0)| \wedge \delta \Big)^2
	\\
	&\geq C_1^2 \delta^2 \frac{N_{\delta,n}}{n} + \frac{C_1^2}{n} \sum_{i\notin \bbN_{\delta,n}} |x_i^T (\theta-\theta_0)|^2,
\end{split}\ee
so we have that $N_{\delta,n}/n \leq \epsilon_n^2 / (C_1^2 \delta^2)$.
Since
\bean
	\sum_{i\notin \bbN_{\delta,n}} |x_i^T (\theta-\theta_0)|^2
	&\geq& \sum_{i=1}^n |x_i^T (\theta-\theta_0)|^2 - N_{\delta,n} \max_{i\geq 1} |x_i^T (\theta-\theta_0)|^2
	\\
	&\geq& \phi^2(s_n) \frac{n}{s_n} \|\theta-\theta_0\|_1^2 - L^2 N_{\delta, n} \|\theta - \theta_0 \|_1^2,
\eean
we have that
\bean
	\epsilon_n^2 \geq C_1^2 \|\theta - \theta_0 \|_1^2 \bigg( \frac{\phi^2(s_n)}{s_n} - \frac{L^2 N_{\delta,n}}{n}\bigg)
	\geq C_1^2 \|\theta - \theta_0 \|_1^2 \bigg( \frac{\phi^2(s_n)}{s_n} - \frac{L^2 \epsilon_n^2}{C_1^2 \delta^2}\bigg).
\eean
Since $s_n^2 \log p / \phi^2(s_n) = o(n)$, the last display is bounded below by
$C_1^2 \|\theta-\theta_0\|_1^2 \phi^2(s_n) / (2s_n)$ for large enough $n$.
Therefore,
$$
	\|\theta-\theta_0\|_1^2 \leq \frac{2K_{\rm Hel}^2}{C_1^2} \frac{s_n^2 \log p}{\phi^2(s_n) n}.
$$

From the first line of \eqref{eq:tech-ineq}, we have that
$$
	\frac{n\epsilon_n^2}{C_1^2} \geq \|\bX(\theta-\theta_0)\|_2^2 - \sum_{i \in \bbN_{\delta,n}} \Big(|x_i^T(\theta-\theta_0)|^2 - \delta^2\Big).
$$
Therefore, 
\bean
	\|\bX(\theta-\theta_0)\|_2^2 &\leq& \frac{n\epsilon_n^2}{C_1^2} + \sum_{i \in \bbN_{\delta,n}} |x_i^T(\theta-\theta_0)|^2
	\leq \frac{n\epsilon_n^2}{C_1^2} + L^2 N_{\delta,n} \|\theta-\theta_0\|_1^2
	\\
	&\leq& \frac{K_{\rm Hel}^2}{C_1^2} \Big( 1 + L^2 \|\theta-\theta_0\|_1^2 / \delta^2 \Big) s_n \log p.
\eean
Since $\|\theta - \theta_0\|_1 = o(1)$ by the first assertion of \eqref{eq:theta-rate}, it holds that $\|\bX(\theta-\theta_0)\|_2 \lesssim K_{\rm Hel}^2 \sqrt{s_n \log p}$.
Also, by the definition of $\psi(s)$, we conclude that
$\psi(s_n) \|\theta-\theta_0\|_2 \lesssim K_{\rm Hel}^2 \sqrt{s_n \log p / n}$.
\qed

\medskip
{\it Proof of Lemma \ref{lem:normal-concentration}.}
Note that $|h^T G_{n,\eta,S}| \leq \|h\|_1 \|G_{n,\eta,S}\|_\infty$ and
$$
	\bbE \left(\sup_{S\in\cS_n}\sup_{\eta\in\cH_n} \| G_{n,\eta,S}\|_\infty\right)
	\lesssim \sqrt{\log p}
$$
by Lemma \ref{lem:score-empirical}.
Since $h^T V_{n,\eta,S} h \geq v_\eta \phi^2(s_n) \|h\|_1^2 / s_n$, $\phi(s_n) \geq \psi(s_n)$ , $\sup_{\eta\in\cH_n} |v_\eta - v_{\eta_0}| = o(1)$ (it is shown in \cite{chae2016semiparametric} that $\lim_{d_H(\eta,\eta_0)\rightarrow 0} |v_\eta - v_{\eta_0}| = 0$), $v_{\eta_0} \gtrsim 1$ and $M_n \rightarrow \infty$, we have that
$$
	\sup_{S\in\cS_n} \sup_{h \in A_S} \sup_{\eta\in\cH_n}  \frac{|h^T G_{n,\eta,S}|}{h^T V_{n,\eta,S} h} = o_{P_0}(1).
$$
Also, $h^T V_{n,\eta,S} h \geq v_{\eta} \psi^2(s_n) \|h\|_2^2$ implies that there exist events $(\Omega_n)$ and a constant $C >0$ such that $P_0^{(n)}(\Omega_n) \rightarrow 1$ and, on $\Omega_n$,
$$
	\sup_{S\in\cS_n} \sup_{\eta\in\cH_n} \left( h^T G_{n,\eta,S} - \half h^T V_{n,\eta,S}h \right) \leq -C \|h\|_2^2
$$
for every $h\in A_S$.
Let $\mu$ be the Lebesgue measure, then on $\Omega_n$, the numerator of \eqref{eq:normal-tail} is bounded by
\bean
	&&\int_{A_S} \exp\left[ -C\|h\|_2^2 \right] dh
	\leq
	\int_{\{h:\|h\|_2 > M_n \sqrt{s_n\log p}\}} \exp\left[ -C\|h\|_2^2 \right] dh
	\\
	&&\leq
	\sum_{k=1}^\infty \exp\left( -C k M_n^2 s_n\log p\right) \mu \left\{h: kM_n^2 s_n \log p < \|h\|_2^2 \leq (k+1) M_n^2 s_n \log p\right\}
	\\
	&&\leq
	\sum_{k=1}^\infty \exp\left( -C k M_n^2 s_n\log p\right) \mu \left\{h: \|h\|_2^2 \leq (k+1) M_n^2 s_n \log p\right\}
	\\
	&&=
	\sum_{k=1}^\infty \exp\left( -C k M_n^2 s_n\log p\right) \frac{\pi^{|S|/2}}{\Gamma(|S|/2 + 1)} \left\{(k+1) M_n^2 s_n \log p\right\}^{|S|}
	\\
	&&\leq
	(\sqrt{\pi} M_n^2 s_n\log p)^{s_n/2} \sum_{k=1}^\infty (k+1)^{s_n/2} \exp\left( -C k M_n^2 s_n\log p\right)
	\\
	&&\leq
	(\sqrt{\pi} M_n^2 s_n\log p)^{s_n/2} \sum_{k=1}^\infty \exp\left( - \half C k M_n^2 s_n\log p\right)
	\\
	&&\leq
	(\sqrt{\pi} M_n^2 s_n\log p)^{s_n/2} \left[ \exp\left(-\half CM_n^2 s_n \log p\right) + \int_1^\infty \exp\left(-\half Cx M_n^2 s_n \log p\right) dx \right]
	\\
	&&\leq
	(\sqrt{\pi} M_n^2 s_n\log p)^{s_n/2} \exp\left(-\hbox{$1 \over 3$} CM_n^2 s_n \log p\right)
\eean
for large enough $n$.
Note that the denominator of \eqref{eq:normal-tail} is equal to
$$	
	(2\pi)^{|S|/2} |V_{n,\eta,S}|^{-1/2} \exp\left( \hbox{$1\over{2v_{\eta_0}}$} \|\bH_S \dot L_{n,\eta} \|_2^2 \right),
$$
and
$$
	|V_{n,\eta,S}| \leq \left( \frac{{\rm tr}(V_{n,\eta,S})}{|S|}\right)^{|S|} \leq (L^2 v_{\eta})^{s_n/2},
$$
where ${\rm tr}(A)$ denotes the trace of a matrix $A$.
Therefore, the log of the left hand side of \eqref{eq:normal-tail} tends to $-\infty$ on $\Omega_n$, which completes the proof.
\qed

\begin{lemma} \label{lem:score-unif-rate}
For a sequence $\epsilon_n \rightarrow 0$, let $\cH_n = \{\eta\in\cH_0: d_H(\eta,\eta_0) \leq \epsilon_n\}$.
Then, for every $\zeta > 0$, 
$$
	\int \sup_{\eta\in\cH_n} (\score_\eta - \score_{\eta_0})^2 dP_{\eta_0} \leq
	K_\zeta (\epsilon_n)^{4/5 - \zeta}
$$
for large enough $n$, where $K_\zeta$ is a constant depending only on $\sigma_1, \sigma_2, M$ and $\zeta$.
\end{lemma}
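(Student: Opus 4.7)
The plan is to bound the integrand pointwise, split the domain of integration into a growing compact interval and its tails, and use a Gagliardo-Nirenberg interpolation inequality to squeeze a power of $\epsilon_n$ out of the Hellinger bound. The algebraic identity
$$
\score_\eta - \score_{\eta_0} = -\frac{\eta' - \eta_0'}{\eta} + \frac{\eta_0'(\eta - \eta_0)}{\eta\, \eta_0}
$$
reduces the problem to controlling $\|\eta' - \eta_0'\|_{L^2}$ and $\|\eta - \eta_0\|_{L^2}$ times certain weights. On a growing interval $[-K_n, K_n]$, every $\eta \in \cH_0$ is bounded below by $c_0 \exp(-K_n^2/\sigma_1^2)$ (lower-bound a Gaussian mixture by any single component with $z \in [-M,M]$, $\sigma \in [\sigma_1,\sigma_2]$), so the weights $\eta_0/\eta^2$ and $|\score_{\eta_0}|^2 \eta_0/\eta^2$ contribute a factor of the order $e^{c K_n^2}$ times a polynomial in $K_n$.

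The central step is the one-dimensional Gagliardo-Nirenberg inequality
$$
\|f'\|_{L^2(\mathbb{R})} \;\leq\; C\, \|f\|_{L^1(\mathbb{R})}^{2/5}\, \|f''\|_{L^2(\mathbb{R})}^{3/5},
$$
applied to $f = \eta - \eta_0$. The factor $2/5$ comes from the unique scaling relation $-(1-\alpha) + \alpha(k - \tfrac12) = \tfrac12$ with $k=2$, giving $\alpha = 3/5$. To use this I need (i) $\|f\|_1 \lesssim \epsilon_n$, which follows from $\|\eta - \eta_0\|_1 \leq 2 d_H(\eta,\eta_0) \leq 2\epsilon_n$ by Cauchy-Schwarz, and (ii) $\|f''\|_2 \lesssim 1$ uniformly in $\eta \in \cH_0$. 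The latter follows from Minkowski together with $\|\phi_\sigma''\|_2 \lesssim \sigma_1^{-2}$, which bounds $\|\eta''\|_2$ by the $L^2$-norm of a Gaussian-second-derivative kernel averaged against any mixing measure. This gives $\|\eta' - \eta_0'\|_2^2 \lesssim \epsilon_n^{4/5}$. An analogous interpolation $\|f\|_\infty \lesssim \|f\|_1^{2/3}\|f''\|_\infty^{1/3}$ gives $\|\eta-\eta_0\|_2^2 \lesssim \|f\|_\infty \|f\|_1 \lesssim \epsilon_n^{5/3}$, so the second piece is of lower order.

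For the tail $|y| > K_n$, the Lipschitz estimate \eqref{eq:libschitz} (taking $y \to 0$ in the first line) shows $|\score_\eta(y)| \leq L_1(1+|y|)$ uniformly in $\eta \in \cH_0$, and $\eta_0$ has sub-Gaussian tails with scale $\sigma_2$, so
$$
\int_{|y|>K_n}(\score_\eta - \score_{\eta_0})^2\, dP_{\eta_0} \;\lesssim\; \int_{|y|>K_n}(1+y^2)\,\eta_0(y)\,dy \;\lesssim\; K_n\, e^{-K_n^2/(3\sigma_2^2)}.
$$
Choosing $K_n = C_\zeta \sqrt{\log(1/\epsilon_n)}$ with $C_\zeta$ small enough that $e^{2K_n^2/\sigma_1^2}\epsilon_n^{4/5} \leq \epsilon_n^{4/5 - \zeta}$ and simultaneously large enough that the tail contribution is $o(\epsilon_n^{4/5})$, the two halves combine to yield the claimed bound for any $\zeta > 0$.

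The main obstacle is verifying the interpolation inequality in the exact form above and justifying the uniform bound $\sup_{\eta \in \cH_0}\|\eta''\|_2 \lesssim 1$; once these are in hand, the split-and-balance argument with $K_n$ logarithmic in $1/\epsilon_n$ is routine bookkeeping, and the exponent $4/5$ is exactly that produced by the $L^1$-$L^2$ Gagliardo-Nirenberg exponent for the first derivative with a second-derivative bound.
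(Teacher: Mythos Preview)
Your Gagliardo--Nirenberg interpolation $\|f'\|_{L^2}\le C\|f\|_{L^1}^{2/5}\|f''\|_{L^2}^{3/5}$ is valid and the exponent $4/5$ it produces for $\|\eta'-\eta_0'\|_2^2$ is suggestive, but the balancing in your last paragraph does not close. The obstruction is the weight $\eta_0/\eta^2$ you pick up on the bulk after your algebraic identity. The only \emph{uniform} lower bound available for $\eta\in\cH_0$ is $\eta(y)\gtrsim e^{-(|y|+M)^2/(2\sigma_1^2)}$, while $\eta_0(y)$ may decay only like $e^{-(|y|-M)^2/(2\sigma_2^2)}$; hence on $[-K_n,K_n]$ the weight $\eta_0/\eta^2$ can be as large as $e^{cK_n^2}$ with $c=1/\sigma_1^2-1/(2\sigma_2^2)>0$. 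So your bulk term is at best $\epsilon_n^{4/5}e^{cK_n^2}$, and the requirement $e^{cK_n^2}\le\epsilon_n^{-\zeta}$ forces $K_n^2\le(\zeta/c)\log(1/\epsilon_n)$. But the tail bound $\lesssim e^{-K_n^2/(2\sigma_2^2)}$ is $o(\epsilon_n^{4/5})$ only if $K_n^2\ge(8\sigma_2^2/5)\log(1/\epsilon_n)$. These two constraints are compatible only when $\zeta\ge 8c\sigma_2^2/5=(8/5)\bigl(\sigma_2^2/\sigma_1^2-\tfrac12\bigr)>4/5$, so for small $\zeta$ no choice of $C_\zeta$ exists. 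Your scheme (after switching to an $L^\infty$ interpolation so that the supremum over $\eta$ can be moved inside the integral --- note that as written, your $L^2$ bound controls $\sup_\eta\int$, not $\int\sup_\eta$) does produce \emph{some} positive power of $\epsilon_n$, but not $4/5-\zeta$ for arbitrary $\zeta$.

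The paper avoids the $e^{cy^2}$ weight by never dividing by $\eta$. It approximates $\score_\eta-\score_{\eta_0}$ by a finite-difference quotient of $\ell_\eta-\ell_{\eta_0}$ plus a Taylor remainder of size $O(|x|)$, and works throughout with the already-weighted quantity $f_\eta(y)=(\ell_\eta(y)-\ell_{\eta_0}(y))^2\eta_0(y)$. The only weight change that enters is $\eta_0(y)/\eta_0(y+x)\lesssim e^{|y|}$ (linear exponent, coming from \eqref{eq:libschitz}), so on the bulk the extra factor is merely $e^{O(K_n)}=e^{O(\sqrt{\log(1/\epsilon_n)})}$, sub-polynomial in $\epsilon_n$ and absorbable into $\zeta$. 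The pointwise bound $f_\eta\lesssim\delta_n^{8/5-\zeta}$ is obtained by an iterative bootstrap: from $\int f_\eta\lesssim\delta_n^2$ and $|f_\eta'|\lesssim\sqrt{f_\eta}$ one gets $f_\eta\lesssim\delta_n^{4/3}$; feeding this back into a refined bound on $|f_\eta'|$ upgrades the exponent along the recursion $\gamma\mapsto 1+3\gamma/8$, whose fixed point is $8/5$. A final optimisation over the step size $|x|$ converts $8/5$ for $f_\eta$ into $4/5$ for the score integral.
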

\begin{proof}
For a function $f:\bbR\mapsto\bbR$, denote its first and second derivatives as $f^\prime$ and $f^{\prime\prime}$.
Note that $\score_\eta(y) = -\ell^\prime(y)$.
Note also that
$$
	\sup_{\eta\in\cH_0} |\score_\eta(y)| \lesssim |y| \quad {\rm and} \quad \eta_0(y) \lesssim e^{-a_1 y^2}
$$
for large enough $|y|$, where $a_1 > 0$ is a constant depending only on $\sigma_1,\sigma_2$ and $M$.
For a constant $C_1 > 0$, let $A = \{y: |y| \leq C_1\sqrt{\log 1/\epsilon_n}\}$.
Then,
\bean
	\int_{A^c} \sup_{\eta\in\cH_n} |\score_\eta(y) - \score_{\eta_0}(y)|^2 dP_{\eta_0}(y)
	\lesssim \int_{A^c} y^2 e^{-a_1 y^2} dy \leq \int_{A^c} y e^{-a_1 y^2/2} dy
	\lesssim \epsilon_n^{a_1 C_1^2/2}
\eean
for large enough $n$.
Thus, we can choose $C_1 > 0$, depending only on $\sigma_1,\sigma_2$ and $M$, such that 
$$
	\int_{A^c} \sup_{\eta\in\cH_n} |\score_\eta(y) - \score_{\eta_0}(y)|^2 dP_{\eta_0}(y)
	\leq \epsilon_n
$$
for large enough $n$.
Write $\ell^\prime_\eta(y) - \ell_{\eta_0}^\prime(y) = \{\ell^\prime_\eta(y) - \ell_{\eta_0}^\prime(y) - d_\eta(x,y)\} + d_\eta(x,y)$, where
$$
	d_\eta(x,y) = \frac{\ell_\eta(y+x) - \ell_\eta(y)}{x} - \frac{\ell_{\eta_0}(y+x) - \ell_{\eta_0}(y)}{x}.
$$
Note that by \eqref{eq:libschitz} and the Taylor expansion, $|\ell^\prime_\eta(y) - \ell_{\eta_0}^\prime(y) - d_\eta(x,y)| \lesssim |x|(1+|y|^2)$ provided that $|x|$ is small enough.
Also, $|x d_\eta(x,y)| \leq |\ell_\eta(y+x) - \ell_{\eta_0}(y+x)| + |\ell_\eta(y) - \ell_{\eta_0}(y)|$, and
\bean
	&& \int_A \sup_{\eta\in\cH_n} |\ell_\eta(y+x) - \ell_{\eta_0}(y+x)|^2 dP_{\eta_0}(y)
	\\
	&=& \int_A \sup_{\eta\in\cH_n} |\ell_\eta(y+x) - \ell_{\eta_0}(y+x)|^2 \frac{\eta_0(y)}{\eta_0(y+x)} \eta_0(y+x) dy
	\\
	&\lesssim& \int_A \sup_{\eta\in\cH_n} |\ell_\eta(y) - \ell_{\eta_0}(y)|^2 e^{y} dP_{\eta_0}(y)
\eean
for every small enough $x$, where the last inequality holds because
$$
	\frac{\eta_0(y)}{\eta_0(y+x)} \leq \sup_{|z|\leq M} \sup_{\sigma\in[\sigma_1,\sigma_2]} \frac{\exp\{-(y-z)^2/(2\sigma^2)\}}{\exp\{-(y+x-z)^2/(2\sigma^2)\}} \lesssim e^{|y|} \leq e^y + e^{-y}.
$$
Therefore,
\be\begin{split} \label{eq:sup-score-bound}
	&\int \sup_{\eta\in\cH_n} |\score_\eta(y) - \score_{\eta_0}(y)|^2 dP_{\eta_0}(y)
	\\
	&\lesssim \epsilon_n + |x|^2 + \frac{\int_A \sup_{\eta\in\cH_n} |\ell_\eta(y) - \ell_{\eta_0}(y)|^2 e^y dP_{\eta_0}(y)}{|x|^2}
\end{split}\ee
for every small enough $x$.

Let $f_\eta(y) = \{\ell_\eta(y) - \ell_{\eta_0}(y)\}^2 \eta_0(y)$ and $\delta_n = \epsilon_n \log(1/\epsilon_n)$.
Since
\be\begin{split}\label{eq:df-bound}
	f^\prime_\eta(y) = & 2\{\ell_\eta(y) - \ell_{\eta_0}(y)\} \{\ell^\prime_\eta(y) - \ell^\prime_{\eta_0}(y)\} \eta_0(y) 
	\\
	& + \{\ell_\eta(y) - \ell_{\eta_0}(y)\}^2 \eta_0^\prime(y),
\end{split}\ee
we have by the triangle inequality
\bean
	|f^\prime_\eta(y)|
	&\lesssim& \sqrt{f_\eta(y)} \left( |\ell_\eta^\prime(y) - \ell_{\eta_0}^\prime(y)| \sqrt{\eta_0(y)} + |\ell_\eta(y) - \ell_{\eta_0}(y)| \frac{|\eta_0^\prime(y)|}{\sqrt{\eta_0(y)}} \right)
	\\
	&=& \sqrt{f_\eta(y)} \sqrt{\eta_0(y)} \Big( |\ell_\eta^\prime(y) - \ell_{\eta_0}^\prime(y)| + |\ell_\eta(y) - \ell_{\eta_0}(y)| \ell_{\eta_0}^\prime(y) \Big)
	\\
	&\lesssim& (1+|y|^3) \sqrt{\eta_0(y)} \sqrt{f_\eta(y)}.
\eean
Also, it is easy to show that $|f^{\prime\prime}_\eta(y)| \lesssim (1+|y|^4) \eta_0(y)$ and $\eta_0(y+x) / \eta_0(y) \lesssim e^y$ for small enough $x$.
Thus, by the Taylor expansion, we have that
\bean
	|f_\eta(y+x) - f_\eta(y)| &\lesssim& |x|(1+|y|^3) \sqrt{\eta_0(y)} \sqrt{f_\eta(y)} + x^2 (1+|y|^4) e^y \eta_0(y),
\eean
and therefore, there exists a constant $C_2 > 1/4$, depending only on $\sigma_1, \sigma_2$ and $M$, such that
$$
	|f_\eta(y+x) - f_\eta(y)| \leq C_2 \left( |x| \sqrt{f_\eta(y)} + x^2\right)
$$
for every $\eta \in \cH_0$, $y\in\bbR$ and small enough $x$.
Assume that for some constant $K$, $f_\eta(y_0) > K\delta_n^{4/3}$ for some $\eta\in\cH_n$ and $y_0\in\bbR$.
Then $f_\eta(y_0+x) > f_\eta(y_0)/2$ for every $x$ with $|x| \leq \sqrt{f_\eta(y_0)} / 4C_2$, so it holds that
$$
	\int f_\eta(y) dy \geq \int_{\{y: |y-y_0| < \sqrt{f_{\eta}(y_0)}/(4C_2)\}} f_\eta(y) dy
	\geq \frac{K^{3/2}}{4C_2} \delta_n^2.
$$
If $K$ is large enough, this makes a contradiction because $\int f_\eta(y) dy \lesssim \delta_n^2$ for every $\eta\in\cH_n$ by Theorem 5 of \cite{wong1995probability}.
Therefore, it holds that $f_\eta(y) \lesssim \delta_n^{4/3}$ for every $\eta\in\cH_n$ and $y\in\bbR$.

Next, we claim that if there exists a $\gamma \in (1,2)$ such that $f_\eta(y) \lesssim (\delta_n)^\gamma$ for every $y\in B$, then $f_\eta(y) \lesssim (\delta_n)^{1+(3\gamma)/8}$ for every $y\in B$, where $B = \{y: |y| \leq 2C_1 \sqrt{\log(1/\delta_n)}\}$.
For every $y\in B$ and small enough $x$ with $y+x\in B$, it holds by \eqref{eq:libschitz} that
\bean
	|\ell_\eta^\prime(y) - \ell_{\eta_0}^\prime(y)| 
	&\leq& \left|\ell_\eta^\prime(y) - \ell_{\eta_0}^\prime(y) - \left(\frac{\ell_\eta(y+x) - \ell_\eta(y)}{x} - \frac{\ell_{\eta_0}(y+x) - \ell_{\eta_0}(y)}{x} \right) \right|
	\\
	&& \quad + \left|\frac{\ell_\eta(y+x) - \ell_\eta(y)}{x} - \frac{\ell_{\eta_0}(y+x) - \ell_{\eta_0}(y)}{x} \right|
	\\
	&\lesssim& |x| (1+|y|^2) + \frac{|\ell_\eta(y+x) - \ell_{\eta_0}(y+x)| + |\ell_\eta(y) - \ell_{\eta_0}(y)|}{|x|}.
\eean
Since
$$
	|\ell_\eta(y+x) - \ell_{\eta_0}(y+x)| \sqrt{\eta_0(y)} = \sqrt{f_\eta(y+x)} \sqrt{\frac{\eta_0(y)}{\eta_0(y+x)}} \lesssim e^{|xy|/\sigma_1^2} (\delta_n)^{\gamma/2},
$$
we have that
\be\label{eq:score-sqrt-prod}
	|\ell_\eta^\prime(y) - \ell_{\eta_0}^\prime(y)| \sqrt{\eta_0(y)} \lesssim |x| (1+|y|^2) \sqrt{\eta_0(y)} + \frac{e^{|xy|/\sigma_1^2} (\delta_n)^{\gamma/2}}{|x|}
	\lesssim |x| + \frac{e^{|xy|/\sigma_1^2} (\delta_n)^{\gamma/2}}{|x|}.
\ee
By taking $|x| = (\delta_n)^{\gamma/4}$, the right hand side of \eqref{eq:score-sqrt-prod} is bounded by a constant multiple of $(\delta_n)^{\gamma/4}$ for every $y\in B$.
Therefore, by \eqref{eq:df-bound}, there exists a constant $C_3 > 0$, depending only on $\sigma_1, \sigma_2$ and $M$, such that
$$
	|f^\prime_\eta(y)| \lesssim \sqrt{f_\eta(y)} (\delta_n)^{\gamma/4} + (1+|y|) f_\eta(y)
	\leq C_3(\delta_n)^{(3\gamma)/4}
$$
for $y \in B$.
As before, for some large constant $K$, assume that $f_\eta(y_0) > K\delta_n^{1+(3\gamma)/8}$ for some $\eta\in\cH_n$ and $y_0\in B$.
Then, $|f_\eta(y+x) - f_\eta(y)| \leq K\delta_n^{1+(3\gamma)/8}/2$ provided that $|x| \leq K(\delta_n)^{1-(3\gamma)/8} / (2C_3)$.
Since $\int f_\eta(y) dy \lesssim \delta_n^2$ by Theorem 5 of \cite{wong1995probability},
large $K$ makes a contradiction, so the proof of claim is complete.

Assume that $\zeta > 0$ is given.
Since the real sequence $(\gamma_k)_{k=1}^\infty$ defined as $\gamma_1 = 4/3$ and $\gamma_{k+1} = 1 + (3 \gamma_k)/8$ converges to $8/5$, by applying the claim repeatedly, we can find a constant $C_\zeta > 0$ depending only on $\sigma_1, \sigma_2, M$ and $\zeta$ such that $|f_\eta(y)| \leq C_\zeta (\delta_n)^{8/5-\zeta}$.
Also, for large enough $n$, $\delta_n \leq \sqrt{\epsilon_n}$ implies that $\sqrt{\log (1/\epsilon_n)} \leq 2\sqrt{\log (1/\delta_n)}$.
Therefore,
\bean
	&&\int_A \sup_{\eta\in\cH_n} |\ell_\eta(y) - \ell_{\eta_0}(y)|^2 e^y dP_{\eta_0}(y)
	\leq \int_B \sup_{\eta\in\cH_n} |\ell_\eta(y) - \ell_{\eta_0}(y)|^2 e^y dP_{\eta_0}(y)
	\\
	&&\leq C_\zeta \delta_n^{8/5 - \zeta} \int_B e^y dy
	\leq C_\zeta \delta_n^{8/5 - \zeta} \int_{-\infty}^{2 C_1 \sqrt{\log \delta_n^{-1}}} e^y dy
	\\
	&&= C_\zeta \delta_n^{8/5 - \zeta} e^{2 C_1 \sqrt{\log \delta_n^{-1}}}
	\leq  C_\zeta (\delta_n)^{8/5 - 2\zeta}
\eean
for large enough $n$.
Note that the right hand side of \eqref{eq:sup-score-bound} is minimized when
$$
	|x|^4 = \int_A \sup_{\eta\in\cH_n} |\ell_\eta(y) - \ell_{\eta_0}(y)|^2 e^y dP_{\eta_0}(y).
$$
In this case,  \eqref{eq:sup-score-bound} is bounded by a constant multiple of $(\delta_n)^{4/5 - \zeta}$.
Since $(\delta_n)^{4/5 - \zeta} \leq \epsilon_n^{4/5 - 2\zeta}$, the proof is complete by \eqref{eq:sup-score-bound}.
\end{proof}

\medskip
{\it Proof of Theorem \ref{thm:LAN}.}
By Theorem \ref{thm:mLAN}, $s_n^5 (\log p)^3 = o(n)$ implies that
$$
	\bbE \Big(\sup_{\theta\in M_n\Theta_n} \sup_{\eta\in\cH_n} |\widetilde r_n(\theta,\eta)|\Big) = o(1)
$$
for some $M_n \rightarrow \infty$, where
$$
	\widetilde r_n(\theta,\eta) = L_n(\theta,\eta) - L_n(\theta_0,\eta)
	- \sqrt{n}(\theta-\theta_0)^T \bbG_n \score_{\theta_0,\eta}
	+ \frac{n}{2} (\theta-\theta_0)^T V_{n,\eta}(\theta-\theta_0).
$$
Therefore, it suffices to prove that
\be\label{eq:lan-tech1}
	\sup_{\theta\in M_n\Theta_n} \sup_{\eta\in\cH_n} \frac{n}{2} \left| (\theta-\theta_0)^T (V_{n,\eta} - V_{n,\eta_0}) (\theta-\theta_0)\right| = o(1)
\ee
and
\be\label{eq:lan-tech2}
	\bbE\left( \sup_{\theta\in M_n\Theta_n} \sup_{\eta\in\cH_n} \left|\sqrt{n}(\theta-\theta_0)^T \bbG_n (\score_{\theta_0, \eta} - \score_{\theta_0, \eta_0})\right| \right) = o(1).
\ee
We may assume that $M_n$ is sufficiently slowly increasing as described below.
Note that
\bean
	|v_\eta - v_{\eta_0}| \| \bX(\theta-\theta_0)\|_2^2 
	\leq K_{\rm theta}^2 |v_\eta - v_{\eta_0}| M_n^2 s_n \log p
	\\
	\lesssim K_{\rm theta}^2 \left\{\int (\score_\eta - \score_{\eta_0})^2 dP_{\eta_0}\right\}^{1/2} M_n^2 s_n \log p,
\eean
where the second inequality holds by Cauchy-Schwarz.
Since for every $\zeta > 0$ there exists a constant $C_\zeta > 0$ such that
$$
	\left\{\int (\score_\eta - \score_{\eta_0})^2 dP_{\eta_0}\right\}^{1/2} \lesssim C_\zeta \left(\frac{s_n \log p}{n}\right)^{1/5-\zeta},
$$
by Lemma \ref{lem:score-unif-rate} and $(s_n\log p)^6  = O(n^{1-\xi})$ for some $\xi > 0$, \eqref{eq:lan-tech1} holds for sufficiently slowly growing $M_n$.

For \eqref{eq:lan-tech2}, we may assume that $\theta_0=0$ without loss of generality.
Note that
\bean
	\left|\sqrt{n}(\theta-\theta_0)^T \bbG_n (\score_{\theta_0, \eta} - \score_{\theta_0, \eta_0})\right|
	\leq \sqrt{n}\|\theta-\theta_0\|_1 \|\bbG_n (\score_{\theta_0, \eta} - \score_{\theta_0, \eta_0})\|_\infty
	\\
	\leq K_{\rm theta} M_n s_n\sqrt{\log p} \sup_{\eta\in\cH_n} \Big\| \bbG_n (\score_{\theta_0, \eta} - \score_{\theta_0, \eta_0}) \Big\|_\infty
\eean
for every $\theta \in M_n \Theta_n$ and $\eta\in\cH_n$.
Let $\cF_n = \cup_{j=1}^p \cF_{n,j}$, where
$$
	\cF_{n,j} = \left\{M_n s_n \sqrt{\log p}\; e_j^T(\score_{\theta_0, \eta} - \score_{\theta_0, \eta_0}): \eta\in\cH_n \right\}
$$
and $e_j$ is the $j$th unit vector in $\bbR^p$.
Note that 
$$
	F_n(x,y) = L M_n s_n \sqrt{\log p} \sup_{\eta\in\cH_n} |\score_{\eta}(y) - \score_{\eta_0}(y)|
$$
is an envelope function of $\cF_n$.
Also, it is easy to see that
$$
	N_{[]}^n (\epsilon, \cF_{n,j}) \leq N_{[]}\left(\frac{\epsilon}{L M_n s_n \sqrt{\log p}}, \cG_n, L_2(P_{\eta_0}) \right),
$$
where $\cG_n = \{ \score_\eta: \eta\in \cH_n \}$.
It follows that
\bean
	\log N_{[]}^n(\epsilon, \cF_n) \leq \log p + \log N_{[]}\left(\frac{\epsilon}{L M_n s_n \sqrt{\log p}}, \cG_n, L_2(P_{\eta_0}) \right).
\eean
Similarly to the proof of Lemma \ref{lem:score-unif-rate}, it can be shown that $L_2(P_{\eta_0})$-norm of $\score_{\eta_1} - \score_{\eta_2}$ is bounded by $\{d_H(\eta_1, \eta_2)\}^\gamma$ for some constant $\gamma > 0$, so
\bean
	\log N_{[]}(\epsilon, \cG_n, L_2(P_{\eta_0})) \leq \log N_{[]}(\epsilon^{1/\gamma}, \cH_n, d_H) \lesssim  \Big( \frac{1}{\gamma} \log \frac{1}{\epsilon}\Big)^3
\eean
for every $\epsilon > 0$, where the last inequality holds by Theorem 3.3 of \cite{ghosal2001entropies}.
This implies that
\bean
	\log N_{[]}^n(\epsilon, \cF_n) \lesssim \gamma^{-3} \left\{ \Big(\log \frac{1}{\epsilon}\Big)^3 + (\log s_n)^3 + \log p\right\}.
\eean
Since for every $\zeta >0$ there exists a constant $C^\prime_\zeta > 0$ such that
$$
	\|F_n\|_n \lesssim M_n s_n \sqrt{\log p} \left\{ \int \sup_{\eta\in\cH_n} (\score_{\eta} - \score_{\eta_0})^2 dP_{\eta_0}\right\}^{1/2} 
	\leq C_\zeta^\prime M_n s_n \sqrt{\log p} \left( \frac{s_n \log p}{n}\right)^{1/5-\zeta}
$$
by Lemma \ref{lem:score-unif-rate} and
$$
	\int_0^a \left(\log \frac{1}{\epsilon}\right)^{3/2} d\epsilon 
	\leq \int_0^a \left(\log \frac{1}{\epsilon}\right)^2 d\epsilon
	= \int_{-\log a}^\infty x^2 e^{-x} dx
	\leq \int_{-\log a}^\infty e^{-x/2} dx \leq 2\sqrt{a}
$$
for small enough $a > 0$, we have
\be\begin{split} \label{eq:sup_bd}
	\bbE \Big( \sup_{f\in\cF_n} |\bbG_nf|\Big) 
	&\lesssim
	\gamma^{-3/2} C_\zeta^\prime M_n s_n \sqrt{\log p} \left( \frac{s_n \log p}{n}\right)^{1/5-\zeta}
	\left\{ \sqrt{\log p} + (\log s_n)^{3/2} \right\}
	\\
	& + \gamma^{-3/2} \left\{ C_\zeta^\prime M_n s_n \sqrt{\log p} \left( \frac{s_n \log p}{n}\right)^{1/5-\zeta} \right\}^{1/2}
\end{split}\ee
for every $\zeta > 0$ by Corollary \ref{cor:maximal_bracket}.
Note that
$$
	s_n \log p \left( \frac{s_n \log p}{n}\right)^{1/5-\zeta} = 
	n^{-\xi/5} \left\{ \frac{(s_n \log p)^6}{n^{1-\xi}} \right\}^{1/5}
	\left( \frac{s_n \log p}{n}\right)^{-\zeta}.
$$
Since $(s_n\log p)^6  = O(n^{1-\xi})$ and $\log s_n \leq \log n$, \eqref{eq:sup_bd} is of order $o(1)$ provided that $\zeta$ is small enough and $M_n$ is sufficiently slowly growing.
\qed

\begin{lemma} \label{lem:d_v-partition}
Let $(\Omega, \cF)$ be a measurable space and $(\Omega_i)_{i\in I}$ be a measurable partition of $\Omega$ for some discrete index set $I$.
Let $\bw = (w_i)_{i\in I}$ and $\widetilde\bw = (\widetilde w_i)_{i\in I}$ be probability measures on $I$, and for each $i\in I$, $P_i$ and $\widetilde P_i$ be probability measures on $\Omega$ such that $P_i(\Omega_i) = \widetilde P_i (\Omega_i) = 1$.
Then, it holds that
$$
	d_V(Q, \widetilde Q) \leq 2d_V(\bw, \widetilde \bw) + \sum_{i\in I} w_i d_V(P_i, \widetilde P_i)
$$
where $Q = \sum_{i\in I} w_i P_i$ and $\widetilde Q = \sum_{i\in I} \widetilde w_i \widetilde P_i$.
\end{lemma}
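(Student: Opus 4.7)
The plan is to reduce the claim to the standard triangle inequality on $L^1$. Choose any common dominating measure $\mu$ for the countable collection $\{P_i, \widetilde P_i\}_{i\in I}$ (for instance $\mu = \sum_{i\in I} 2^{-i}(P_i + \widetilde P_i)$), and let $p_i$, $\widetilde p_i$ be the Radon--Nikodym densities of $P_i$, $\widetilde P_i$. Then $Q$ has density $q = \sum_{i\in I} w_i p_i$ and $\widetilde Q$ has density $\widetilde q = \sum_{i\in I} \widetilde w_i \widetilde p_i$; by the paper's convention, $d_V(Q, \widetilde Q) = \int |q - \widetilde q|\, d\mu$.

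Next, I apply the triangle inequality twice. Pulling the absolute value inside the sum and then splitting $w_i p_i - \widetilde w_i \widetilde p_i = w_i (p_i - \widetilde p_i) + (w_i - \widetilde w_i) \widetilde p_i$, I obtain pointwise
\begin{align*}
	|q(\omega) - \widetilde q(\omega)| \leq \sum_{i\in I} w_i |p_i(\omega) - \widetilde p_i(\omega)| + \sum_{i\in I} |w_i - \widetilde w_i|\, \widetilde p_i(\omega).
\end{align*}
Integrating against $\mu$ and using $\int \widetilde p_i\, d\mu = 1$ together with the identity $d_V(\bw, \widetilde \bw) = \sum_{i\in I} |w_i - \widetilde w_i|$ (the $d_V$-distance of the two mass functions on $I$ with respect to counting measure) yields
\begin{align*}
	d_V(Q, \widetilde Q) \leq \sum_{i\in I} w_i\, d_V(P_i, \widetilde P_i) + d_V(\bw, \widetilde \bw),
\end{align*}
which is even slightly sharper than the stated bound (coefficient $1$ rather than $2$ on the mixing-weight term).

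There is no genuine obstacle; this is purely an $L^1$ triangle-inequality exercise, and the factor of $2$ in the statement is cosmetic slack. The partition hypothesis $P_i(\Omega_i) = \widetilde P_i(\Omega_i) = 1$ is not actually used in the inequality itself—it is included because, in the intended application to Theorem \ref{thm:BvM}, the mixture components of both the marginal posterior of $\theta$ and its normal approximation $\Pi^\infty(\cdot \mid \bD_n)$ are supported on the disjoint slices $\{\theta : S_\theta = S\}$, and the lemma will be invoked with $I$ indexed by subsets $S \subset \{1,\ldots,p\}$ so that it splits the TV bound into a model-probability piece and a within-model piece.
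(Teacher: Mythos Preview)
Your proof is correct and follows essentially the same decomposition as the paper's: both split $w_iP_i - \widetilde w_i\widetilde P_i$ into a component-difference term $w_i(P_i-\widetilde P_i)$ and a weight-difference term $(w_i-\widetilde w_i)\widetilde P_i$, via an intermediate mixture $\overline Q=\sum_i w_i\widetilde P_i$. The paper carries this out at the level of sets, bounding $|\overline Q(A)-\widetilde Q(A)|\le\sum_i|w_i-\widetilde w_i|$ and then doubling to pass to $d_V$, whereas you work pointwise with densities and integrate, which avoids that doubling and indeed gives the sharper constant $1$ on the weight term. Your remark that the partition hypothesis is not needed for the inequality is also correct; the paper's proof invokes the $\Omega_i$ only to write $P_i(A)=P_i(A\cap\Omega_i)$, which is harmless but unnecessary for the bound.
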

\begin{proof}
Let $\overline Q = \sum_{i\in I} w_i\widetilde P_i$, then $d_V(Q, \widetilde Q) \leq d_V(Q, \overline Q) + d_V(\overline Q, \widetilde Q)$ by the triangle inequality.
Then, for any $A \in \cF$,
\bean
	2|Q(A) - \overline Q(A)| = 2\left| \sum_{i\in I} w_i \left\{ P_i(A\cap \Omega_i) - \widetilde P_i(A \cap \Omega_i) \right\}\right| \leq \sum_{i\in I} w_i d_V(P_i, \widetilde P_i).
\eean
Also, for any $A \in \cF$
\bean
	|\overline Q(A) - \widetilde Q(A)| = \left| \sum_{i\in I} (w_i - \widetilde w_i) \widetilde P_i(A \cap \Omega_i) \right|
	\leq \sum_{i\in I} |w_i - \widetilde w_i| = d_V(\bw, \widetilde \bw),
\eean
so the proof is complete.
\end{proof}

\medskip
{\it Proof of Theorem \ref{thm:BvM}.}
Define $r_n(\theta,\eta)$ as in Theorem \ref{thm:LAN}, then there is $M_n \rightarrow \infty$ such that 
$$
	\sup_{\theta\in M_n\Theta_n}\sup_{\eta\in\cH_n} |r_n(\theta,\eta)| = o_{P_0}(1).
$$
Note that $(M_n)$ can be chosen to be sufficiently slowly increasing, so that 
$$
	\sup_{S\in\cS_n}\sup_{\theta_S\in\Theta_S} \lambda \|\theta_S-\theta_{0,S}\|_1 = o(1),
$$
where
$\Theta_S = \{\theta_S\in\bbR^{|S|}: \widetilde\theta_S\in M_n\Theta_n\}$.
Therefore, we have 
\be\label{eq:small-lambda-bvm}
	\sup_{S\in\cS_n} \sup_{\theta_S\in\Theta_S} |\log \{g_S(\theta_S) / g_S(\theta_{0,S})\}| = o(1).
\ee

Let $\widetilde \Pi_\Theta$ and $\widetilde\Pi_\cH$ be priors restricted and renormalized on $M_n\Theta_n$ and $\cH_n$, respectively.
Let $\widetilde\Pi = \widetilde\Pi_\Theta \times \widetilde \Pi_\cH$ and $\widetilde \Pi(\cdot| \bD_n)$ be the corresponding posterior distribution.
Then, it is easy to see that $d_V(\widetilde \Pi(\cdot|\bD_n), \Pi(\cdot|\bD_n)) = o_{P_0}(1)$.
Similarly, let $\widetilde \Pi^\infty(\cdot|\bD_n)$ be the restricted and renormalized version of $\Pi^\infty(\cdot|\bD_n)$ onto $M_n \Theta_n$.
It can be written as
\bean
	d\widetilde\Pi(\theta|\bD_n) &=& \sum_{S\in\cS_n} \widetilde w_S d\widetilde Q_S(\theta_S) d\delta_0(\theta_{S^c}),
	\\
	d\widetilde\Pi^\infty(\theta|\bD_n) &=& \sum_{S\in\cS_n} n^{-|S|/2} \widetilde w_S^{\infty} d\widetilde \cN_{n,S}(h_S) d\delta_0(\theta_{S^c}),
\eean
where $\widetilde Q_S$ and $\widetilde \cN_{n,S}$ are restricted and renormalized versions of $Q_S$ and $\cN_{n,S}$ onto $\Theta_S$ and $H_S = \sqrt{n}(\Theta_S - \theta_0)$, respectively, and 
\bean
	\widetilde w_S = \frac{Q_S(\Theta_S)}{\sum_{S^\prime \in \cS_n} w_{S^{\prime}} Q_{S^\prime} (\Theta_{S^\prime})} w_S
	\quad \textrm{and} \quad
	\widetilde w_S^\infty = \frac{\cN_{n,S}(H_S)}{\sum_{S^\prime \in \cS_n} w_{S^\prime} \cN_{n,S^\prime} (H_{S^\prime})} w_S.
\eean
Since $\sum_{S\in\cS} w_S \rightarrow 1$ in $P_0^{(n)}$-probability, we have, by Lemma \ref{lem:normal-concentration}, that
\be\label{eq:w-ratio}
	\sup_{S\in\cS_n} \left| 1 - \frac{w_S}{\widetilde w_S^\infty} \right| = o_{P_0}(1)
\ee
and
\bean
	\sup_{S\in\cS_n} d_V(\cN_{n,S}, \widetilde \cN_{n,S}) = o_{P_0}(1).
\eean
It follows by Lemma \ref{lem:d_v-partition} that $d_V(\widetilde \Pi^\infty(\cdot|\bD_n), \Pi^\infty(\cdot|\bD_n)) = o_{P_0}(1)$.
Therefore, it suffices to prove \eqref{eq:bvm2} with $\Pi(\cdot|\bD_n)$ and $\Pi^\infty(\cdot|\bD_n)$ replaced by $\widetilde\Pi(\cdot|\bD_n)$ and $\widetilde\Pi^\infty(\cdot|\bD_n)$, respectively.

Let $\widetilde\bw=(\widetilde w_S)_{S\in\cS_n}$ and $\widetilde\bw^\infty=(\widetilde w^\infty_S)_{S\in\cS_n}$.
Note that $\sum_{S \in \cS_n} w_{S} Q_{S} (\Theta_{S}) \rightarrow 1$ in $P_0^{(n)}$-probability.
Thus, by Lemma \ref{lem:normal-concentration} and \eqref{eq:w-ratio}, we have
\bean
	d_V(\widetilde \bw, \widetilde \bw^\infty) 
	&=& \sum_{S\in\cS_n} |\widetilde w_S - \widetilde w_S^\infty| 
	= \sum_{S_\in\cS_n} \bigg|1 - \frac{\widetilde w_S}{\widetilde w_S^\infty} \bigg| \widetilde w_S^\infty 
	\\
	&=& \sum_{S_\in\cS_n} \Big\{1 - Q_S(\Theta_S) \Big\} \widetilde w_S^\infty  + o_{P_0}(1)
	= o_{P_0}(1).
\eean
Since
\bean
	\widetilde \Pi(\theta \in B | \bD_n, \eta, S_\theta=S)
	= \frac{\int_{B\cap\Theta_S} \exp\{L_n(\widetilde \theta_S, \eta) - L_n(\theta_0,\eta)\} g_S(\theta_S)/g_S(\theta_{0,S}) d\theta_S}{\int_{\Theta_S} \exp\{ L_n(\widetilde \theta_S, \eta) - L_n(\theta_0,\eta)\} g_S(\theta_S)/g_S(\theta_{0,S}) d\theta_S},
\eean
we have by Theorem \ref{thm:LAN}, Lemma \ref{lem:normal-concentration} and \eqref{eq:small-lambda-bvm} that
\be\label{eq:cond-bvm}
	\sup_{\eta\in\cH_n} \sup_{S\in\cS_n} \sup_B \left| \widetilde\Pi(h_S\in B| \bD_n, \eta, S_\theta=S) - \widetilde \cN_{n,S}(B)\right| = o_{P_0}(1),
\ee
where the third supremum is taken over all measurable $B\subset\bbR^{|S|}$.
Since
$$
	\widetilde\Pi(h_S\in B| \bD_n, S_\theta=S) = \int_{\cH_n} \widetilde\Pi(h_S\in B| \bD_n, \eta, S_\theta=S) d\widetilde\Pi(\eta|\bD_n, S_\theta=S),
$$
we have 
$$
	\sup_{S\in\cS_n} \sup_B \big|\widetilde Q_S(h_S\in B) - \widetilde \cN_{n,S}(B)) \big| = o_{P_0}(1).
$$
Therefore, \eqref{eq:bvm2} holds by Lemma \ref{lem:d_v-partition}.
\qed

\medskip
{\it Proof of Theorem \ref{thm:selection}.}
Let $\cS_n^\prime = \{S\in\cS_n: S \supsetneq S_0\}$ and $(M_n)$ be a diverging sequence satisfying the assertion of Theorem \ref{thm:LAN}.
Note that $(M_n)$ can be chosen to be sufficiently slowly increasing, so that $\sup_{S\in\cS_n} \sup_{\theta_S\in \Theta_S} \lambda \|\theta_S-\theta_{0,S}\|_1 = o(1)$, where $\Theta_S = \{\theta_S\in\bbR^{|S|}: \widetilde\theta_S\in M_n\Theta_n\}$.
Then it holds that
\be\label{eq:small-lambda}
	\sup_{S\in\cS_n} \sup_{\theta_S\in\Theta_S} |\log \{g_S(\theta_S) / g_S(\theta_{0,S})\}| = o(1).
\ee
As in the proof of Theorem \ref{thm:BvM}, let $\widetilde \Pi(\cdot| \bD_n)$ be the posterior distribution based on the restricted and renormalized priors $\widetilde \Pi = \widetilde \Pi_\Theta \times \widetilde\Pi_\cH$ on $M_n\Theta_n\times\cH_n$.
Then, we have $d_V(\Pi(\cdot|\bD_n), \widetilde\Pi(\cdot|\bD_n)) \rightarrow 0$ in $P_0^{(n)}$-probability.
Thus, it suffices to prove that $\E \widetilde\Pi(S_\theta \in\cS_n^\prime | \bD_n) \rightarrow 0$.

By Theorem \ref{thm:LAN} and Lemma \ref{lem:normal-concentration}, there exist events $(\Omega^\prime_n)$ and a sequence $\epsilon_n \rightarrow 0$ such that $P_0^{(n)}(\Omega^\prime_n) \rightarrow 1$ and on $\Omega^\prime_n$,
\be\begin{split}\nonumber
	& \exp\left\{ \sqrt{n}(\theta-\theta_0)^T \bbG_n \score_{\theta_0,\eta_0}
	- \frac{n}{2} (\theta-\theta_0)^T V_{n,\eta_0}(\theta-\theta_0) - \epsilon_n\right\}
	\\
	& \leq \exp\left\{ L_n(\theta,\eta) - L_n(\theta_0, \eta)\right\} 
	\\
	&\leq \exp\left\{ \sqrt{n}(\theta-\theta_0)^T \bbG_n \score_{\theta_0,\eta_0}
	- \frac{n}{2} (\theta-\theta_0)^T V_{n,\eta_0}(\theta-\theta_0) + \epsilon_n\right\} 
\end{split}\ee
for every $\theta\in M_n \Theta_n$ and $\eta\in\cH_n$, and 
\be \label{eq:normal-tail2}
	\inf_{S\in\cS_n} \frac{\int_{\Theta_S} \exp\left( h^T G_{n,S} - \half h^T V_{n,S}h\right) dh}
	{\int_{\bbR^{|S|}} \exp\left( h^T G_{n,S} - \half h^T V_{n,S}h\right) dh} \geq e^{-\epsilon_n}.
\ee
Since
\bean
	\widetilde\Pi(S_\theta = S | \bD_n, \eta) \propto \frac{\pi_p(|S|)}{\binom{p}{|S|}} \int_{\Theta_S} \exp\left\{ L_n(\widetilde\theta_S,\eta) - L_n(\theta_0,\eta) \right\} g_S(\theta_S) d\theta_S
\eean
for every $S \in \cS_n^\prime$ and $\eta\in\cH_n$, we have on $\Omega^\prime_n$ that $\widetilde \Pi(S_\theta \in \cS_n^\prime | \bD_n, \eta) \leq e^{2\epsilon_n} B_n / A_n$, where
\bean
	A_n &=& \frac{\pi_p(s_0)}{\binom{p}{s_0}} \int_{\Theta_{S_0}} \exp\left\{ E_{n,S_0}(\theta) \right\} g_{S_0}(\theta_{S_0}) d\theta_{S_0}
	\\
	B_n &=& \sum_{S\in\cS_n^\prime}\frac{\pi_p(|S|)}{\binom{p}{|S|}} \int_{\Theta_{S}} \exp\left\{ E_{n,S}(\theta) \right\} g_{S}(\theta_{S}) d\theta_{S}
\eean
and
$$
	E_{n,S}(\theta) =  \sqrt{n}(\theta_{S}-\theta_{0,S})^T G_{n,S} - \frac{n}{2} (\theta_{S}-\theta_{0,S})^T V_{n,S}(\theta_{S}-\theta_{0,S}).
$$
Note that both $A_n$ and $B_n$ do not depend on $\eta$.
Thus, by \eqref{eq:small-lambda} and \eqref{eq:normal-tail2}, if $\epsilon_n$ is sufficiently slowly decreasing, then on $\Omega^\prime_n$, we have
\be\label{eq:w-hat-ratio}
	e^{-3\epsilon_n} \widetilde \Pi(S_\theta \in \cS_n^\prime | \bD_n) \leq \sum_{S\in\cS_n^\prime} \frac{\hat w_S}{\hat w_{S_0}},
\ee
where $\hat w_S$ is defined as \eqref{eq:w-hat-def}.
Therefore, the proof is complete if the right hand side of \eqref{eq:w-hat-ratio} is of order $o_{P_0}(1)$.

The right hand side of \eqref{eq:w-hat-ratio} is bounded by
\bean
	\sum_{s=s_0 + 1}^{s_n/2} \frac{\pi_p(s)}{\pi_p(s_0)} \binom{s}{s_0} \left(\frac{\lambda \sqrt{\pi}}{\sqrt{2 v_{\eta_0}}}\right)^{s-s_0} 
	\max_{|S|=s} \left[\frac{|\bX_{S_0}^T \bX_{S_0}|^{1/2}}{|\bX_S^T \bX_S|^{1/2}}
	\exp\left\{\hbox{$1 \over {2v_{\eta_0}}$} \|(\bH_S - \bH_{S_0}) \dot L_{n,\eta_0}\|_2^2 \right\} \right],
\eean
and
$$
	\frac{\pi_p(s)}{\pi_p(s_0)} \leq A_2^{s-s_0} p^{-A_4(s-s_0)}
$$
by \eqref{eq:pi_p_condition}.
It is shown in \cite{castillo2015bayesian} (see (6.11)) that
$$
	\frac{|\bX_{S_0}^T \bX_{S_0}|}{|\bX_S^T \bX_S|} \leq \{n\psi^2(s_n)\}^{-(|S|-s_0)}
$$
for every $S \in \cS_n^\prime$.
Also, we shall show below that
\be\label{eq:projection}
	P_0^{(n)} \left(\hbox{$1 \over {2v_{\eta_0}}$} \|(\bH_S - \bH_{S_0}) \dot L_{n,\eta_0} \|_2^2 > K_{\rm sel} (s-s_0) \log p,\; \textrm{for some $S \in \cS_n^\prime$}\right) \rightarrow 0
\ee
for some constant $K_{\rm sel}$, depending only on $\eta_0$.
Therefore, for some constant $C$, the right hand side of \eqref{eq:w-hat-ratio} is bounded by
\bean
	\sum_{s=1}^\infty e^{ -(s-s_0)\left\{ A_4 \log p + \log n - \log s - \log \lambda - K_{\rm sel} \log p + C \right\} }
\eean
with probability tending to 1, which converges to 0 provided that $A_4 > K_{\rm sel}$.

It only remains to prove \eqref{eq:projection}.
Note that the number of models $S$ containing $S_0$ with $|S|=s$ is equal to $N_s = \binom{p-s_0}{s-s_0}$.
By the Markov inequality, for any $r, u > 0$, 
\be\begin{split}\label{eq:proj-tech1}
	& P_0^{(n)} \left( \max_{|S| = s} \|(\bH_S - \bH_{S_0}) \dot L_{n,\eta_0}\|_2^2 > r\log N_s \right)
	\\
	&\leq e^{-ur\log N_s} \E \left(\max_{|S|=s} e^{u \|(\bH_S - \bH_{S_0}) \dot L_{n,\eta_0} \|_2^2}\right)
	\\
	&\leq N_s^{-(ur-1)} \max_{|S|=s} \E e^{u \|(\bH_S - \bH_{S_0}) \dot L_{n,\eta_0} \|_2^2}
\end{split}\ee
and
\bean
	\E e^{u \|(\bH_S - \bH_{S_0}) \dot L_{n,\eta_0} \|_2^2}
	\leq e^{u \E\|(\bH_S - \bH_{S_0}) \dot L_{n,\eta_0} \|_2^2}
	\E e^{u |\|(\bH_S - \bH_{S_0}) \dot L_{n,\eta_0} \|_2^2 - \E\|(\bH_S - \bH_{S_0}) \dot L_{n,\eta_0} \|_2^2|}.
\eean
For $S \in\cS_n^\prime$, there exists an orthonormal set $\{\bfe_{S, j}: j \leq |S|-s_0\}$ in $\bbR^n$ such that 
$$
	\|(\bH_S - \bH_{S_0}) \dot L_{n,\eta_0} \|_2^2 = \sum_{j=1}^{|S|-s_0} \left(\sum_{i=1}^n e_{S,ji} \score_{\eta_0}(\epsilon_i)\right)^2,
$$
where $\bfe_{S,j} = (e_{S,ji})$ and $\epsilon_i = Y_i - x_i^T \theta_0$.
Thus, 
$$
	\E \|(\bH_S - \bH_{S_0}) \dot L_{n,\eta_0} \|_2^2 = v_{\eta_0} (|S|-s_0).
$$
Since $\bH_S - \bH_{S_0}$ is an orthogonal projection matrix, it holds that $\|\bH_S - \bH_{S_0}\| \leq 1$ and $\|\bH_S - \bH_{S_0}\|_F = \sqrt{|S|-s_0}$, where $\|\cdot\|$ and $\|\cdot\|_F$ denote the $\ell_2$-operator norm and Frobenius norm.
Since $\score_{\eta_0}(\epsilon_i)$ is a sub-Gaussian random variable by \eqref{eq:libschitz} and $\E \score_{\eta_0}(\epsilon_i) = 0$, there exists a universal constant $c>0$ and a constant $K$ depending only on $\eta_0$, such that
\bean
	&&\E e^{u |\|(\bH_S - \bH_{S_0}) \dot L_{n,\eta_0} \|_2^2 - \E\|(\bH_S - \bH_{S_0}) \dot L_{n,\eta_0} \|_2^2|}
	\\
	&&= \int_1^\infty P_0^{(n)} \left( \big|\|(\bH_S - \bH_{S_0}) \dot L_{n,\eta_0} \|_2^2 - \E\|(\bH_S - \bH_{S_0}) \dot L_{n,\eta_0} \|_2^2\big| > \hbox{$1 \over u$} \log t \right) dt
	\\
	&&\leq \int_1^\infty 2\exp\left[ -c \min \left\{ \frac{(\log t / u)^2}{K^4 (|S|-s_0)}, \frac{\log t / u}{K^2} \right\} \right] dt
	\\
	&&\leq 2 e^{uK^2(|S|-s_0)} + 2\int_{e^{uK^2(|S|-s_0)}}^\infty e^{-\frac{c\log t}{uK^2}} dt
	\\
	&& \leq 2 e^{uK^2(|S|-s_0)} + 2\int_{e^{uK^2(|S|-s_0)}}^\infty t^{-\frac{c}{uK^2}} dt
\eean
where the first inequality holds by Lemma \ref{lem:hw-inequality}.
If we take $u= c/(2K^2)$ and $r = 2/u$, then the  last integral is bounded by $c/2$, so \eqref{eq:proj-tech1} is bounded by
$$
	2 N_s^{-1} e^{uv_{\eta_0}(|S|-s_0)} \left( e^{c(|S|-s_0)/2} + C \right)
$$
for some constant $C$.
With $K_{\rm sel}=r / (2v_{\eta_0})$, the probability in \eqref{eq:projection} is bounded by
$$
	2 \sum_{s=1}^{s_n/2 - s_0} N_s^{-1} e^{uv_{\eta_0}(s-s_0)} \left( e^{c(s-s_0)/2} + C \right).
$$
This tends to zero because
$$
	N_s \geq \frac{(p-s)^{s-s_0}}{\Gamma(s-s_0+1)} \geq \frac{(p/2)^{s-s_0}}{\Gamma(s-s_0+1)} \geq e^{(s-s_0)\log(p/2s_n)}
$$
and $s_n/p \leq s_n \lambda / \sqrt{n} = o(1)$.
\qed

\section{Discussion}
\label{sec-discussion}

Dimension  conditions such as $s_0 \log p \ll n^{1/6}$ are required for two reasons.
The first one is for handling the remainder term in the LAN expansion.
In this paper, we applied a bracketing argument to handle uniform convergence of empirical processes, but more elaborate chaining techniques as in \cite{spokoiny2012parametric} might be helpful to improve the required dimension.
In some parametric models, LAN holds under the dimension condition that $s_0 \ll n^{1/3}$ \cite{panov2015finite}, and  $n^{1/3}$ cannot be improved in general.
The critical dimension depends on the model, and $s_0 \ll n^{1/6}$ is required even in some parametric models \cite{ghosal2000asymptotic}.
The second reason is for handling the semi-parametric bias as explained in Section \ref{ssec:bvm}.
This part can be improved if we can estimate the score function $\score_\eta$ with a faster rate.
The rate for $\score_\eta$ is obtained using a Hellinger rate and structures of normal mixtures (Lemma \ref{lem:score-unif-rate}), which perhaps leaves some space for improvement. 
For the prior $\Pi_\cH$, we assumed that the base measure of the Dirichlet process is compactly supported.
This is mainly due to technical convenience, and with more delicate consideration using sieves, we believe that most results in this paper can be extended to more general priors.
Finally, it should be noted that the key property  one utilizes  for proving selection consistency in Theorem \ref{thm:selection} is the sub-Gaussianity of $\score_{\eta_0}(Y_i - x_i^T \theta_0)$.
The  proof can be extended to more general settings such as generalized linear models and Gaussian models under misspecification, if the corresponding score functions are sub-Gaussian.
Similar conditions can be found in frequentist's selection criteria \cite{kim2016consistent}.

\section{Acknowledgment}

Part of the research of MC and LL  was funded by  NSF grant  IIS1546331 and a grant from the Army's research office while DD's contribution was funded by ONR grant N00014-14-1-0245.  LL thanks David Pollard for pointing  her to useful references on bracketing. 

\appendix

\section{Empirical process with bracketing}\label{sec:maximal}

This section introduces bracketing methods for independent but not identically distributed random variables.
Suppose $Z_1, Z_2, \ldots$ is a sequence of independent $\cX$-valued random variables and let $\cF$ be a class of real-valued functions on $\cX$.
Let $N_{[]}^n(\delta, \cF)$ be the minimal number $N$ of sets in a partition $\{\cF_1, \ldots, \cF_N\}$ of $\cF$ such that
$$
	\frac{1}{n} \sum_{i=1}^n \E \sup_{f,g\in\cF_j} |f(Z_i) - g(Z_i)|^2 \leq \delta^2
$$
for every $j \leq N$.
For each $j$, fix $f_j \in \cF_j$, and let $A_\delta f (x) = f_j(x)$ and
$$
	B_\delta f(x) = \sup_{g,h\in\cF_j} |g(x) - h(x)|
$$
for $f \in \cF_j$.
Also let $B(x) = \max_{f\in\cF} B_\delta f(x)$.
Let $\bbG_n f = n^{-1/2} \sum_{i=1}^n (f(Z_i) - \E f(Z_i))$.
We always assume that
$$
	\lim_{\epsilon \rightarrow 0} N_{[]}^n(\epsilon, \cF) = \infty
	\quad \textrm{and} \quad
	\int_0^1 \sqrt{\log (N_{[]}^n(\epsilon, \cF))} < \infty.
$$

\begin{lemma}\label{lem:maximal_bracket}
For some universal constant $C > 0$,
\bean
	\E \sup_{f\in\cF} | \bbG_n(f-A_\delta(f))| 
	\leq C \int_0^\delta \sqrt{\log (2 N_{[]}^n(\epsilon, \cF))} d\epsilon
	+ \frac{1}{\sqrt{n}} \sum_{i=1}^n \E B(Z_i) 1_{\{B(Z_i) > \sqrt{n} \alpha \}},
\eean
where $\alpha = \delta / \sqrt{\log (2 N_{[]}^n(\delta, \cF))}$.
\end{lemma}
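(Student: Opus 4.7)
My approach is the classical chaining argument with bracketing, adapted to the non-identically distributed setting by working with the normalized $L_2$-norm $\|f\|_n^2 = n^{-1}\sum_{i=1}^n \E f(Z_i)^2$ that underlies the definition of $N_{[]}^n$, and by combining Bernstein's inequality for independent (but not i.i.d.)\ summands with a single-scale truncation that absorbs large jumps into the envelope tail. First I set $\delta_k = \delta\, 2^{-k}$ for $k\geq 0$ and $N_k = N_{[]}^n(\delta_k,\cF)$, and introduce the associated partitions $\{\cF_j^k\}$, approximants $A_{\delta_k} f = A_k f$, and bracket-widths $B_{\delta_k} f = B_k f$ as in the statement. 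By refining if necessary, I may assume the partitions are nested, which at worst replaces $N_k$ by $N_k N_{k-1}$ and costs only a universal constant. Then for any $f\in\cF$ I telescope $f - A_0 f = \sum_{k\geq 1}(A_k f - A_{k-1} f)$, the infinite sum converging in $\|\cdot\|_n$ since $\|B_k f\|_n \leq \delta_k \to 0$; a standard monotone-approximation/dominated-convergence argument lets me bound $\E\sup_f |\bbG_n(f - A_0 f)|$ by the limit of finite-chain expectations.

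For each link $A_k f - A_{k-1} f$ I perform the truncation at level $\sqrt{n}\,a_{k-1}$, where $a_k = \delta_k/\sqrt{\log(2N_k)}$, writing $A_k f - A_{k-1} f = \Delta_k^\flat f + \Delta_k^\sharp f$ with $\Delta_k^\flat f$ the bounded part. There are at most $N_k N_{k-1} \leq N_k^2$ distinct values of $\Delta_k^\flat f$; each is a sum of $n$ independent centered summands bounded pointwise by $2\sqrt{n}\, a_{k-1}$ and with variance at most $n\|B_{k-1} f\|_n^2 \leq n\delta_{k-1}^2$. Bernstein's inequality and a union bound over the $N_k^2$ choices, followed by integration of the resulting tail, yield $\E\sup_f |\bbG_n \Delta_k^\flat f| \lesssim \delta_{k-1}\sqrt{\log(2N_k)}$ with a universal constant, upon the natural choice of deviation level $\lambda_k\asymp \delta_{k-1}\sqrt{\log(2N_k)}$, which balances the two terms in the Bernstein exponent.

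Summing over $k\geq 1$ and recognizing the geometric sum as a Riemann sum for a monotone integrand gives
\[
\sum_{k\geq 1}\delta_{k-1}\sqrt{\log(2N_k)} \;\lesssim\; \int_0^\delta \sqrt{\log(2N_{[]}^n(\epsilon,\cF))}\,d\epsilon,
\]
which produces the first term on the right-hand side of the lemma. For the untruncated pieces $\Delta_k^\sharp f$, the pointwise bound $|\Delta_k^\sharp f(x)| \leq B_{k-1}f(x)\,\mathbf{1}\{B_{k-1}f(x) > \sqrt{n}\,a_{k-1}\} \leq B(x)\,\mathbf{1}\{B(x) > \sqrt{n}\,\alpha\}$ (using $a_{k-1}\geq a_0 = \alpha$ and $B_{k-1}f\leq B$) lets me bound their combined contribution by $\frac{2}{\sqrt{n}}\sum_{i=1}^n \E B(Z_i)\mathbf{1}\{B(Z_i)>\sqrt{n}\,\alpha\}$, which is the second term in the claim.

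\textbf{Main obstacle.} The delicate step is Step 5: ensuring the untruncated remainders across \emph{all} chaining levels collapse into a single copy of the envelope tail rather than summing to an $\epsilon$-dependent factor that would blow up. The key observations are that $B_{k-1}f$ is dominated by the coarsest envelope $B$, and that the truncation thresholds $\sqrt{n}\,a_{k-1}$ are bounded below by $\sqrt{n}\,\alpha$, so the event $\{B_{k-1}f>\sqrt{n}\,a_{k-1}\}$ is contained in $\{B>\sqrt{n}\,\alpha\}$; on that event the envelope already controls any number of chain links simultaneously. Getting the constant right here, and in the Bernstein choice of $\lambda_k$, is the only non-routine bookkeeping.
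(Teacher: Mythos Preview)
Your overall plan is the right one and matches the references the paper cites (Pollard's bracketing note and van der Vaart's Lemma~19.34), but Step~5 contains a genuine error. You claim $a_{k-1}\geq a_0=\alpha$, whereas in fact $a_k=\delta_k/\sqrt{\log(2N_k)}$ is \emph{decreasing} in $k$: $\delta_k=\delta\,2^{-k}$ shrinks and $N_k=N_{[]}^n(\delta_k,\cF)$ grows, so $a_{k-1}\leq a_0=\alpha$ for every $k\geq 1$. Consequently the containment $\{B_{k-1}f>\sqrt{n}\,a_{k-1}\}\subset\{B>\sqrt{n}\,\alpha\}$ fails, and the pointwise bound $B_{k-1}f\,\mathbf{1}\{B_{k-1}f>\sqrt{n}\,a_{k-1}\}\leq B\,\mathbf{1}\{B>\sqrt{n}\,\alpha\}$ does not hold. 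Moreover, even if that bound were valid, summing it over all $k\geq 1$ would give infinitely many copies of the same envelope term rather than a single one; the phrase ``the envelope already controls any number of chain links simultaneously'' hides a divergent sum.

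The standard remedy, which is exactly what the cited references do, is to introduce a data-dependent stopping index $\tau=\tau_f(x)=\min\{k\geq 0: B_k f(x)>\sqrt{n}\,a_k\}$ and decompose $f-A_0 f$ accordingly: the piece on $\{\tau=0\}$ is bounded by $B_0 f\,\mathbf{1}\{B_0 f>\sqrt{n}\,a_0\}\leq B\,\mathbf{1}\{B>\sqrt{n}\,\alpha\}$, producing a \emph{single} envelope tail term; the chained links $(A_k f-A_{k-1}f)\,\mathbf{1}\{\tau\geq k\}$ are genuinely bounded by $2\sqrt{n}\,a_{k-1}$ and handled by Bernstein as you describe; and the remaining ``stopped'' pieces $B_k f\,\mathbf{1}\{\tau=k\}$ are bounded by $\sqrt{n}\,a_k$ on that event and contribute another entropy-integral term. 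With this correction your Steps~1--4 and the Bernstein/summation bookkeeping go through essentially as written.
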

\begin{proof}
See \cite{pollard2001bracketing}.
A simpler proof for \iid\;cases can be found in \cite{van1998asymptotic}, Lemma 19.34.
\end{proof}

\medskip
\begin{corollary}\label{cor:maximal_bracket}
Assume that there is an envelop function $F$ of $\cF$ such that $\|F\|_n \leq 1$ for every $n$, where $\|F\|_n^2 = n^{-1} \sum_{i=1}^n \E F^2(Z_i)$.
Then, for some universal constant $C > 0$,
\bean
	\E \sup_{f\in\cF} |\bbG_n f| \leq C \int_0^{\|F\|_n} \sqrt{\log N^n_{[]}(\epsilon, \cF)} d\epsilon.
\eean
\end{corollary}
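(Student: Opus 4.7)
\noindent
\textbf{Proof plan for Corollary \ref{cor:maximal_bracket}.}
The plan is to reduce the statement to a single application of Lemma~\ref{lem:maximal_bracket} by choosing the scale $\delta$ large enough that only one bracket is needed, and then to convert the resulting bound into the desired form via elementary estimates.

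First I would take $\delta_0 = 2\|F\|_n$. Since any two elements $f,g \in \cF$ satisfy $|f-g| \le 2F$, the trivial one-class partition $\cF_1=\cF$ has $L^2$-diameter bounded by $2\|F\|_n$, so $N_{[]}^n(\delta_0,\cF)=1$. Fix any $f_0\in\cF$ as the single representative so that $A_{\delta_0}f \equiv f_0$. By the triangle inequality
$$
\sup_{f\in\cF}|\bbG_n f| \le \sup_{f\in\cF}|\bbG_n(f-f_0)| + |\bbG_n f_0|,
$$
and the scalar term satisfies $\E|\bbG_n f_0| \le \bigl(n^{-1}\sum_i \V(f_0(Z_i))\bigr)^{1/2} \le \|F\|_n$ since $|f_0|\le F$.

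Next, Lemma \ref{lem:maximal_bracket} applied with $\delta=\delta_0$ yields
$$
\E\sup_{f\in\cF}|\bbG_n(f-f_0)| \le C\int_0^{2\|F\|_n}\sqrt{\log(2N_{[]}^n(\epsilon,\cF))}\,d\epsilon + R_n,
$$
with tail $R_n = n^{-1/2}\sum_i \E B(Z_i)\mathbf{1}_{\{B(Z_i)>\sqrt{n}\alpha\}}$ and $\alpha = \delta_0/\sqrt{\log 2}$. Since the single-class partition gives $B \le 2F$, the crude bound $x\mathbf{1}_{\{x>a\}} \le x^2/a$ gives $R_n \le 4\|F\|_n^2/\alpha = 2\sqrt{\log 2}\,\|F\|_n$. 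For the entropy integral I would use the elementary inequality $\sqrt{\log(2N)} \le \sqrt{\log 2}+\sqrt{\log N}$ to split
$$
\int_0^{2\|F\|_n}\!\sqrt{\log(2N_{[]}^n(\epsilon))}\,d\epsilon \le 2\|F\|_n\sqrt{\log 2} + \int_0^{2\|F\|_n}\!\sqrt{\log N_{[]}^n(\epsilon)}\,d\epsilon,
$$
and then exploit monotonicity of $N_{[]}^n$ in $\epsilon$ to write
$$
\int_{\|F\|_n}^{2\|F\|_n}\!\sqrt{\log N_{[]}^n(\epsilon)}\,d\epsilon \le \|F\|_n\sqrt{\log N_{[]}^n(\|F\|_n)} \le 2\int_{\|F\|_n/2}^{\|F\|_n}\!\sqrt{\log N_{[]}^n(\epsilon)}\,d\epsilon \le 2\int_0^{\|F\|_n}\!\sqrt{\log N_{[]}^n(\epsilon)}\,d\epsilon,
$$
which reduces the integration range from $2\|F\|_n$ down to $\|F\|_n$ at the cost of a factor of three.

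The main obstacle is the collection of leftover additive $O(\|F\|_n)$ contributions (from $|\bbG_n f_0|$, the tail $R_n$, and the $\sqrt{\log 2}$ split). These must be absorbed into the entropy integral to obtain the clean statement of the corollary. This step uses the standing assumption $\lim_{\epsilon\to 0} N_{[]}^n(\epsilon,\cF)=\infty$: there is some $\epsilon^\ast$ with $N_{[]}^n(\epsilon^\ast)\ge 2$, which together with monotonicity provides a lower bound of the form $\int_0^{\|F\|_n}\sqrt{\log N_{[]}^n(\epsilon)}\,d\epsilon \gtrsim \|F\|_n$ under the working hypothesis $\|F\|_n \le 1$ and finiteness of $\int_0^1\sqrt{\log N_{[]}^n}$; combining with the preceding display produces the desired bound with a single universal constant $C$.
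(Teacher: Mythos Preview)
Your proposal is correct and follows essentially the same route as the paper's own proof: take $\delta=2\|F\|_n$ so the trivial one-set partition gives $N_{[]}^n(\delta,\cF)=1$, split off a fixed representative via the triangle inequality, bound the tail term $R_n$ by $O(\|F\|_n)$ via a second-moment/Markov argument, and then use monotonicity of $\epsilon\mapsto N_{[]}^n(\epsilon,\cF)$ to reduce the integration range and absorb the leftover additive $\|F\|_n$ terms. The paper's proof is terser---it simply writes ``Since $\epsilon \mapsto N_{[]}^n(\epsilon,\cF)$ is decreasing, we complete the proof''---whereas you spell out the range-halving and the absorption via the standing hypothesis $N_{[]}^n(\epsilon)\to\infty$; both arguments are at the same level of rigor on that last point.
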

\begin{proof}
Let $\delta = 2 \|F\|_n$, then $N^n_{[]}(\delta, \cF)=1$ because $-F \leq f \leq F$ for every $f\in\cF$.
Let $B(x) = \sup_{f,g\in\cF} |f(x)-g(x)|$ and $\alpha = \delta/ \sqrt{\log 2}$.
Then, by the Cauchy-Scwartz and Markov's inequalities,
\bean
	\frac{1}{\sqrt{n}} \sum_{i=1}^n \E B(Z_i) 1_{\{B(Z_i) > \sqrt{n} \alpha \}}
	\leq \frac{1}{\sqrt{n}} \sum_{i=1}^n \sqrt{\E B^2(Z_i) \E 1_{\{B(Z_i) > \sqrt{n} \alpha \}}} 
	\\
	\leq \frac{1}{n\alpha} \sum_{i=1}^n \E B^2(Z_i) 
	\lesssim \frac{1}{n\alpha} \sum_{i=1}^n \E F^2(Z_i) \lesssim \delta^2 /\alpha \lesssim \delta.
\eean
Note that $\E \sup_{f\in\cF} |\bbG_n f| \leq \E \sup_{f\in\cF} |\bbG_n (f-g)|+ \E|\bbG_n g|$ for any $g\in \cF$.
Since 
\bean
	\E |\bbG_n g| \leq \sqrt{\E |\bbG_n g|^2}
	\leq \sqrt{\frac{1}{n} \sum_{i=1}^n \E g^2(Z_i)} \leq \|F\|_n = \delta/2,
\eean
we have
\bean
	\E \sup_{f\in\cF} |\bbG_n f| \lesssim	\int_0^\delta \sqrt{\log N^n_{[]}(\epsilon, \cF)} d\epsilon + \delta
	\lesssim \int_0^{2\|F\|_n} \sqrt{\log N^n_{[]}(\epsilon, \cF)} d\epsilon + \|F\|_n
	\\
	\lesssim \int_0^{\|F\|_n} \sqrt{\log N^n_{[]}(\epsilon, \cF)} d\epsilon + \|F\|_n
\eean
by Lemma \ref{lem:maximal_bracket}, where the last inequality holds by the monotonicity of $\epsilon \mapsto N_{[]}^n(\epsilon, \cF)$.
\end{proof}

\section{Concentration inequalities}

We state the Bernstein and Hanson-Wright inequalities for reader's convenience.

\begin{lemma}[Bernstein inequality] \label{lem:bernstein-inequality}
Let $Z_1, \ldots, Z_n$ be independent random variables with zero mean such that
$$
	K^2 \E\left(e^{|Z_i|/K} - 1 - \frac{|Y_i|}{K}\right) \leq \half v_i
$$
for some constants $K>0$ and $v_i$.
Then,
$$
	P_0\left(\left|Z_1 + \cdots + Z_n \right| > x \right) \leq 2e^{-\frac{x^2}{2(v + Kx)}},
$$
for $v \geq v_1 + \cdots v_n$.
\end{lemma}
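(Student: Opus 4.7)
The plan is to run the standard Bernstein/Chernoff argument: convert the tail bound into a moment generating function (mgf) bound via Markov's inequality, use independence to factor the mgf of the sum, bound each one-variable mgf using the moment hypothesis, and then optimize the Chernoff exponent. By applying the two-sided bound to the sum and its negation, it suffices to prove $P_0(S_n > x) \leq e^{-x^2/(2(v+Kx))}$ where $S_n = Z_1 + \cdots + Z_n$, and then double.

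First I would unpack the hypothesis. Expanding $e^{t}-1-t = \sum_{k\ge 2} t^k/k!$ and applying it to $t=|Z_i|/K$ turns the bound
\[
K^2\, \E\Bigl(e^{|Z_i|/K}-1-\tfrac{|Z_i|}{K}\Bigr) \;\le\; \tfrac{1}{2} v_i
\]
into $\sum_{k\ge 2} \E|Z_i|^k/(k!\,K^{k-2}) \le v_i/2$. Because every term of this series is nonnegative, each single term is bounded by $v_i/2$, which yields the classical Bernstein-type moment bound
\[
\E |Z_i|^k \;\le\; \tfrac{k!}{2}\, v_i\, K^{k-2}, \qquad k\ge 2.
\]

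Next I would bound the mgf. For $0 < \lambda < 1/K$, using $\E Z_i = 0$ and the moment estimate,
\[
\E e^{\lambda Z_i}
\;\le\; 1 + \sum_{k\ge 2} \frac{\lambda^k\,\E|Z_i|^k}{k!}
\;\le\; 1 + \frac{v_i \lambda^2}{2} \sum_{k\ge 2} (\lambda K)^{k-2}
\;=\; 1 + \frac{v_i \lambda^2}{2(1-\lambda K)}
\;\le\; \exp\!\Bigl(\tfrac{v_i \lambda^2}{2(1-\lambda K)}\Bigr).
\]
By independence, $\E e^{\lambda S_n} \le \exp\bigl(v \lambda^2 /(2(1-\lambda K))\bigr)$ for $0<\lambda<1/K$.

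Then I would apply the Chernoff bound $P_0(S_n > x) \le \exp\bigl(-\lambda x + v\lambda^2/(2(1-\lambda K))\bigr)$ and optimize. A direct calculation shows the choice $\lambda = x/(v+Kx)$ lies in $(0,1/K)$, gives $1-\lambda K = v/(v+Kx)$, and makes the exponent equal to $-x^2/(2(v+Kx))$. Symmetrizing by running the same argument for $-S_n$ and combining yields the stated factor of $2$. There is no serious obstacle here; the only point that requires care is extracting the individual moment bounds from the aggregated exponential-moment hypothesis (the nonnegativity observation above) and verifying that the optimizer lies in the convergence regime $\lambda K < 1$.
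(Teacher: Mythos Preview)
Your proof is correct and is the standard Chernoff--Bernstein argument; the paper itself does not give a proof but simply refers to Lemma~2.2.11 of van der Vaart and Wellner, whose proof proceeds along exactly these lines. One minor remark: extracting the individual moment bounds $\E|Z_i|^k \le \tfrac{k!}{2} v_i K^{k-2}$ is not strictly necessary---you can bound $\sum_{k\ge 2} \lambda^k \E|Z_i|^k/k!$ directly by writing $\lambda^k = \lambda^2 (\lambda K)^{k-2} K^{-(k-2)}$ and using the aggregated hypothesis---but your route is the usual textbook one and works without issue.
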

\begin{proof}
See Lemma 2.2.11 in \cite{van1996weak}.
\end{proof}

\medskip
For a random variable $Z$ and the function $\psi_2(t) = e^{t^2} -1$, let
$$
	\|Z\|_{\psi_2} = \inf\left\{K > 0: E e^{Z^2 / K^2} \leq 2\right\}
$$
be the Orlicz norm.
If $\|Z\|_{\psi_2}$ is finite, then $Z$ is called a \emph{sub-Gaussian} random variable.
The Hanson-Wright inequality \cite{hanson1971bound, wright1973bound} provides a tail bound for a quadratic form of sub-Gaussian random variables.
For a matrix $A = (a_{ij})$, let $\|A\| = \sup_{x \neq 0} \|Ax\|_2 / \|x\|_2$ be the $\ell_2$-operator norm and $\|A\|_F = (\sum_{i,j} a_{ij}^2 )^{1/2}$ be the Frobenius norm.

\begin{lemma}[Hanson-Wright inequality] \label{lem:hw-inequality}
Let $Z = (Z_1, \ldots, Z_n)^T$ be a random vector whose components are independent and satisfy $\E Z_i = 0$ and $\|Z_i \|_{\psi_2} \leq K$.
Let $A$ be an $n\times n$ matrix.
Then, for some universal constant $C > 0$,
$$
	P_0\left( |Z^T A Z - \E Z^T A Z| > t \right) \leq 2 \exp\left\{ -C \min\left( \frac{t^2}{K^4 \|A\|_F^2}, \frac{t}{K^2 \|A\|} \right) \right\}
$$
for every $t\geq 0$.
\end{lemma}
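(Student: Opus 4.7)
The plan is to prove the Hanson--Wright inequality by decomposing the quadratic form into its diagonal and off-diagonal parts and treating each by a different tail method. Write $Z^T A Z - \E Z^T A Z = D + R$, where
\[
D = \sum_{i=1}^n a_{ii}\bigl(Z_i^2 - \E Z_i^2\bigr), \qquad R = \sum_{i \neq j} a_{ij} Z_i Z_j,
\]
and it suffices to bound each of $|D|$ and $|R|$ by half the desired right-hand side, since a union bound then yields the stated inequality with a possibly smaller universal constant.

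For $D$, the summands are independent and centered. Since $\|Z_i\|_{\psi_2} \le K$, an elementary argument gives $\E\exp(|Z_i^2 - \E Z_i^2|/(c_0 K^2)) \le 2$ for a universal $c_0$, so $Z_i^2 - \E Z_i^2$ is sub-exponential with Orlicz-1 norm $\lesssim K^2$. Applying Lemma~\ref{lem:bernstein-inequality} with $v_i \asymp a_{ii}^2 K^4$ and the Orlicz bound on the scale $K^2 |a_{ii}|$, and using $\sum_i a_{ii}^2 \le \|A\|_F^2$ together with $\max_i |a_{ii}| = \max_i |e_i^T A e_i| \le \|A\|$, produces the required Bernstein-type tail for $|D|$.

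For $R$, I would first decouple: let $Z'$ be an independent copy of $Z$, and use the standard decoupling inequality (de la Pe\~na--Gin\'e) which compares exponential moments,
\[
\E\exp\!\Big(\lambda \sum_{i\neq j} a_{ij} Z_i Z_j\Big) \le \E\exp\!\Big(4\lambda \sum_{i,j} a_{ij} Z_i Z'_j\Big).
\]
Conditioning on $Z$, the right-hand inner sum $\sum_j b_j(Z) Z'_j$ with $b_j(Z) = \sum_i a_{ij} Z_i$ is a sum of independent mean-zero sub-Gaussian random variables, so its conditional MGF is bounded by $\exp(c_1 \lambda^2 K^2 \|A^T Z\|_2^2)$. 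Taking expectation in $Z$ and applying the Chernoff method, the only thing left to control is $\E\exp(c_1 \lambda^2 K^2 \|A^T Z\|_2^2)$. For $\lambda$ in the range $c_2 \lambda K^2 \|A\| \le 1$, the standard bound $\E\exp(\mu \|A^T Z\|_2^2) \le \exp(c_3 \mu K^2 \|A\|_F^2)$ for $\mu \le c_4/(K^2\|A\|)$ plugs in and yields a sub-Gaussian MGF in the regime $t \lesssim \|A\|_F^2/\|A\|$, while outside that regime one optimizes $\lambda$ at the boundary $\lambda \asymp 1/(K^2 \|A\|)$ to get the sub-exponential piece.

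The main obstacle is precisely the last step: the bound on $\E\exp(\mu \|A^T Z\|_2^2)$ is itself a Hanson--Wright-type statement (applied to $A A^T$), so one has to avoid circularity. The clean way out is to prove it by induction on the dimension or, more elegantly, to first establish the inequality in the Gaussian case (where $Z^T A Z$ reduces, after orthogonal diagonalization of the symmetric part of $A$, to a weighted sum of independent $\chi_1^2$ variables whose exponential moments are explicit) and then transfer to the sub-Gaussian case by a comparison argument: on the MGF side, each sub-Gaussian coordinate $Z_i$ can be replaced by a multiple of a standard Gaussian at the cost of a universal factor, using that $\E\exp(\lambda Z_i) \le \E\exp(c_5 K \lambda g_i)$ for $g_i \sim N(0,1)$. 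Once the bilinear form $Z^T A Z'$ is reduced to a Gaussian chaos of order two, Latala's or Hanson--Wright's Gaussian tail bound closes the argument without any recursion.
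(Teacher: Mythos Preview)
The paper does not prove this lemma at all: its entire proof reads ``See Theorem~1.1 in \cite{rudelson2013hanson}.'' So there is no argument to compare against on the paper's side; the lemma is quoted as a black box.

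Your proposal, by contrast, sketches an actual proof, and in fact it is essentially the Rudelson--Vershynin argument that the paper cites: split into diagonal and off-diagonal parts, handle the diagonal by a Bernstein-type bound for sums of independent sub-exponentials, and for the off-diagonal part decouple to a bilinear form $Z^T A Z'$ and then pass to Gaussians. The outline is sound and would yield the stated inequality.

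One point to tighten: the route you describe for the off-diagonal piece first conditions on $Z$, bounds the conditional MGF by $\exp(c\lambda^2 K^2\|A^T Z\|_2^2)$, and then needs $\E\exp(\mu\|A^T Z\|_2^2)$, which as you note is itself a quadratic-form bound. Your proposed escape via Gaussian comparison is correct, but the cleaner way to avoid the apparent circularity is to perform the comparison \emph{before} returning to a quadratic form: after decoupling, condition on $Z$ so the form is linear in $Z'$ and replace $Z'$ by a Gaussian $g'$; then condition on $g'$ so the form is linear in $Z$ and replace $Z$ by a Gaussian $g$. This two-step MGF domination reduces the problem directly to the Gaussian bilinear chaos $g^T A g'$, whose tail follows from diagonalizing $A^T A$ and using explicit $\chi^2$ moment generating functions. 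That is exactly the structure of the proof in the reference the paper invokes, so your proposal is correct and, in effect, supplies the content the paper outsources.
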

\begin{proof}
See Theorem 1.1 in \cite{rudelson2013hanson}.
\end{proof}




\end{document}